%% file: main.tex
\documentclass[11pt,a4paper]{amsart}

\usepackage{amsmath, amsthm, amsfonts, amssymb}
\usepackage{mathtools}           
\usepackage{stmaryrd}           
\usepackage{MnSymbol}          

\usepackage{graphicx}           
\usepackage{tikz}
\usetikzlibrary{
  arrows.meta,
  positioning,
  decorations.pathmorphing,
  calc,
  shapes,
  fit,
  backgrounds,
  shadings,
  shadows
}
\usepackage{tikz-cd}

\usepackage{array}              
\usepackage[shortlabels]{enumitem}     
\usepackage{tasks}              

\usepackage{geometry}           
\usepackage{lscape}

\usepackage{framed}

\usepackage{xcolor}
\usepackage{hyperref}           

\usepackage[titletoc]{appendix}

\usepackage{subfiles}

\definecolor{darkred}{rgb}{0.8,0,0}
\definecolor{darkgreen}{rgb}{0,0.6,0}
\definecolor{darkblue}{rgb}{0,0,0.8}

\hypersetup{
    colorlinks,
    linkcolor=darkblue,
    filecolor=darkgreen,
    urlcolor=darkred,
    citecolor=darkgreen
}

\usetikzlibrary{arrows.meta,decorations.pathreplacing}
\let\oldmarginpar\marginpar
\renewcommand\marginpar[1]{\oldmarginpar{\tiny\bfseries\flushleft #1}}

\theoremstyle{plain}
\newtheorem{thm}{Theorem}[section]
\newtheorem{cor}[thm]{Corollary}
\newtheorem{lem}[thm]{Lemma}
\newtheorem{prop}[thm]{Proposition}

\newtheorem{ex}{Example}[section]

\theoremstyle{definition}
\newtheorem{defin}[thm]{Definition}

\theoremstyle{remark}

\newtheorem{rmk}[thm]{Remark}

\numberwithin{equation}{section}

\DeclareMathOperator{\Ext}{Ext}

\DeclareMathOperator{\Hom}{Hom}

\DeclareMathOperator{\Id}{Id}
\DeclareMathOperator{\Pic}{Pic}

\DeclareMathOperator{\Ad}{Ad}
\DeclareMathOperator{\Aut}{Aut}

\DeclareMathOperator{\End}{End}

\DeclareMathOperator{\vol}{vol}

\DeclareMathOperator{\tr}{tr}

\DeclareMathOperator{\Sym}{Sym}

\DeclareMathOperator{\supp}{Supp}
       
\DeclareMathOperator{\Nilp}{Nilp}

\newcommand{\st}{\ \middle|\ }

\DeclarePairedDelimiterX{\inn}[2]{\langle}{\rangle}{#1, #2}

\newcommand{\lie}[1]{\mathfrak{#1}}

\newcommand{\cA}{{\mathcal A}}

\newcommand{\cE}{{\mathcal E}}

\newcommand{\cG}{{\mathcal G}}

\newcommand{\cM}{{\mathcal M}}

\newcommand{\cO}{{\mathcal O}}
\newcommand{\cP}{{\mathcal P}}

\newcommand{\cU}{{\mathcal U}}
\newcommand{\cV}{{\mathcal V}}

\newcommand{\cY}{{\mathcal Y}}

\newcommand{\A}{\mathbb{A}} 
\newcommand{\R}{\mathbb{R}} 
\newcommand{\C}{\mathbb{C}} 
\newcommand{\K}{\mathbb{H}}
\newcommand{\Z}{\mathbb{Z}} 
 
\newcommand{\V}{\mathbb{V}}
\newcommand{\W}{\mathbb{W}} 

\newcommand{\Sp}{\mathrm{Sp}}

\newcommand{\SL}{\mathrm{SL}}

\title[Gaiotto loci and the nilpotent cone]{Gaiotto Loci and the Nilpotent Cone for $\Sp_{2n}(\C)$}
\author{Lucas Branco}
\email{lucasmpcastello@gmail.com}
\address{PUC-Rio, Departamento de Matem\'atica, Rio de Janeiro, Brazil}
\date{}
\subjclass[2020]{14H60, 14D20, 53D30, 14M15}
\keywords{Higgs bundles, Gaiotto loci, nilpotent cone, symplectic bundles, Brill--Noether loci}

\begin{document}

\begin{abstract}
Fix a theta characteristic on a compact Riemann surface and let $G$ be a connected complex semisimple Lie group equipped with a symplectic representation. The moment map sends a nonzero spinor with values in the associated representation bundle to a $G$-Higgs field, and the Zariski closure of the stable Higgs bundles obtained in this way is the corresponding Gaiotto locus. For an arbitrary symplectic representation, the Gaiotto locus is isotropic, and we give a Petri-type criterion for it to be Lagrangian. For the standard representation of $\Sp_{2n}(\C)$, with $n\geq 2$, where the moment map is $\psi\mapsto\psi\otimes\psi$, the Gaiotto locus lies in the nilpotent cone. We prove that it is the irreducible component obtained as the Bia\l ynicki--Birula closure associated with $\cU(\Sp_{2n-2}(\C))$. Its intersection with the stable cotangent chart is the closure of the conormal bundle to the one-spinor stratum of the generalized theta divisor.
\end{abstract}

\maketitle
\setcounter{tocdepth}{1}
\tableofcontents

\section*{Introduction}

Let $\Sigma$ be a compact Riemann surface of genus $g\geq 2$, and let $G$ be a connected complex semisimple Lie group. The moduli space $\cM^d(G)$ of semistable $G$-Higgs bundles of fixed topological type admits two complementary geometric descriptions. On one hand, it contains the cotangent bundle to the stable locus of the moduli space of $G$-bundles, with its tautological holomorphic symplectic form. On the other hand, the Hitchin map exhibits it as an algebraically completely integrable system, whose generic fibres are torsors for abelian varieties and whose central fibre is the nilpotent cone. This paper studies a class of subvarieties introduced by the physicist Gaiotto which naturally interact with both descriptions.

Let
\[
\rho:G\to \Sp(\V,\omega)
\]
be a symplectic representation. After using an invariant bilinear form to identify $\lie{g}$ with $\lie{g}^*$, its moment map
\[
\mu:\V\to\lie{g}
\]
is quadratic and $G$-equivariant. Once a theta characteristic $K^{1/2}$ on $\Sigma$ is fixed, a holomorphic principal $G$-bundle $P$ and a nonzero spinor
\[
\psi\in H^0(\Sigma,P(\V)\otimes K^{1/2})
\]
determine a Higgs field
\[
\mu(\psi)\in H^0(\Sigma,\Ad(P)\otimes K).
\]
The Zariski closure in $\cM^d(G)$ of the stable Higgs bundles obtained in this way is denoted
\[
\cG^d_{(\rho,K^{1/2})}\subset \cM^d(G)
\]
and is called the \emph{Gaiotto locus} associated with $(\rho,K^{1/2})$.

This construction was introduced by Gaiotto in the study of boundary conditions and mirror symmetry, and was studied by Hitchin in rank two; it also admits derived-symplectic and loop-group formulations
\cite{gaiotto2016s,spinors,ginzburg2017gaiotto,li2017gaiotto}. From the mirror-symmetry point of view, these loci are natural candidates for the supports of BAA-branes in $\cM(G)$, whose mirrors should be BBB-branes in $\cM({}^LG)$ for the Langlands dual group ${}^LG$ \cite{kapustin2006electric}. In the case studied here the Gaiotto locus turns out to be a component of the nilpotent cone. The corresponding spectral curve is therefore non-reduced, and the usual Fourier--Mukai picture available over smooth spectral curves does not directly apply. The questions addressed in this paper are algebro-geometric: we study when Gaiotto loci are Lagrangian, and we identify them explicitly in the standard symplectic Hitchin system.

The first part of the paper treats an arbitrary symplectic representation. We introduce the moduli space
\[
\cP^d_{(\rho,K^{1/2})}
\]
of semistable $(\rho,K^{1/2})$-pairs. A point is represented by a pair $(P,\psi)$, and there is a natural assignment
\[
(P,\psi)\mapsto (P,\mu(\psi)).
\]
A semistable pair need not give a semistable Higgs bundle. What is used throughout is the converse implication proved in Proposition~\ref{prop:stability-comparison}: every semistable Higgs bundle of the form $(P,\mu(\psi))$ is represented by a semistable pair. Thus the pair moduli space provides the parameter space needed to study the stable locus whose closure is $\cG^d_{(\rho,K^{1/2})}$.

We prove that every Gaiotto locus obtained from a symplectic representation is isotropic; see Proposition~\ref{prop:isotropic}. The proof is a \v Cech-theoretic version of Hitchin's variational argument: the quadratic identity defining the moment map forces the holomorphic symplectic form to vanish on tangent vectors induced by spinor deformations. We then compare the deformation complex of a pair $(P,\psi)$ with the deformation complex of the associated Higgs bundle $(P,\mu(\psi))$. This leads to a Petri-type map
\[
d\mu_\psi:
H^0(\Sigma,P(\V)\otimes K^{1/2})
\longrightarrow
H^0(\Sigma,\Ad(P)\otimes K),
\]
whose injectivity gives a local Lagrangian criterion; see Proposition~\ref{prop:local-lagrangian}. For almost-saturated symplectic representations (see Definition~\ref{def:almost-saturated}), this yields the componentwise Lagrangian criteria of Theorems~\ref{thm:almost-saturated-lagrangian} and~\ref{thm:global-lagrangian-component}, and Corollary~\ref{cor:gaiotto-union-lagrangian}.

The second part specializes to the standard representation of $\Sp_{2n}(\C)$, with $n\geq2$. In this case
\[
\mu(\psi)=\psi\otimes\psi,
\]
so the associated Higgs field is square-zero. Hence the Gaiotto locus is contained in the nilpotent cone of the Hitchin fibration
\[
h:\cM(\Sp_{2n}(\C))\to \bigoplus_{i=1}^n H^0(\Sigma,K^{2i}).
\]
The corresponding moduli space of pairs will be called the \emph{spinor moduli space}. We give an explicit stability criterion for spinor pairs in Proposition~\ref{prop:spinor-stability}, introduce the symplectic vortex equation, and prove the associated Hitchin--Kobayashi correspondence in Theorem~\ref{thm:symplectic-vortex-HK}. This gauge-theoretic description gives a Morse function on the spinor moduli space, namely the squared $L^2$-norm of the spinor.

Since $\Sp_{2n}(\C)$ is simply connected, there is no topological index and we write
\[
\cG:=\cG_{(\rho,K^{1/2})}
\]
for the standard symplectic Gaiotto locus. The relevant fixed-point component is
\[
Z\simeq \cU(\Sp_{2n-2}(\C)),
\]
represented in Higgs moduli by bundles of the form
\[
K^{-1/2}\oplus U\oplus K^{1/2},
\]
with the unique nonzero Higgs component
\[
K^{1/2}\to K^{-1/2}\otimes K
\]
equal to the identity. Let $Z^-$ be the Bia\l ynicki--Birula downward stratum associated with $Z$.

\medskip
\noindent\textbf{Main theorem (Theorem~\ref{thm:gaiotto-component-final}).}
\emph{For $n\geq2$ and for the standard representation of $\Sp_{2n}(\C)$, the Gaiotto locus is the irreducible component of the nilpotent cone obtained from the fixed component $Z$. More precisely,}
\[
\cG=\overline{Z^-}\subset \Nilp(\Sp_{2n}(\C)).
\]
\emph{In particular, $\cG$ is irreducible.}
\medskip

The proof combines two ingredients. First, a Beauville--Laszlo-type modification removes zeros of the spinor without leaving the Zariski closure of the stable Gaiotto locus; see Lemma~\ref{lem:hecke-smoothing}. Second, for nowhere-vanishing spinors one reconstructs the underlying symplectic bundle from extension data
\[
0\subset L\subset E=L^\perp\subset V,
\qquad L=K^{-1/2},
\]
where $U=E/L$ is a symplectic bundle of rank $2n-2$. The compatibility condition on the two extension classes is exactly the condition for the reconstruction to carry a symplectic form; see Lemma~\ref{lem:symplectic-reconstruction}. This produces an irreducible reconstruction stack and a dense open family
\[
\cY\subset Z^-
\]
of dimension
\[
\dim \Sp_{2n}(\C)\, (g-1),
\]
which is the dimension of every irreducible component of the nilpotent cone. It follows that
\[
\overline{\cY}=\overline{Z^-},
\]
and the zero-removal argument then gives
\[
\cG=\overline{Z^-}.
\]

Finally, we relate this component to the generalized theta divisor
\[
\Theta=
\{V\in\cU(\Sp_{2n}(\C))\,\mid\, H^0(\Sigma,V\otimes K^{1/2})\neq0\}.
\]
On the stable cotangent chart $T^*\cU^{\mathrm{st}}$, the Gaiotto component is the closure of the conormal bundle to the open stratum
\[
\Theta_1=
\{V\in\Theta^{\mathrm{st}}\,\mid\, h^0(\Sigma,V\otimes K^{1/2})=1\};
\]
see Proposition~\ref{prop:conormal-picture}. Thus the same component has two descriptions: as a Bia\l ynicki--Birula closure inside the nilpotent cone, and as a conormal variety to the one-spinor stratum of the generalized theta divisor.

The paper is organized as follows. Section~\ref{section1} introduces symplectic representations and Gaiotto loci. Section~\ref{sec:2} studies twisted pairs, and Section~\ref{sec:3} constructs the Hitchin-type map for pairs. Section~\ref{sec:isotropy} proves isotropicity. Section~\ref{sec:lagrangian-criterion} contains the deformation-theoretic comparison and the Lagrangian criteria. Section~\ref{sec:6} specializes to the standard representation and the spinor moduli space. Section~\ref{sec:7} proves the symplectic-vortex Hitchin--Kobayashi correspondence. Section~\ref{sec:8} develops the Morse theory of the spinor moduli space. Section~\ref{sec:gaiotto-component} identifies the standard symplectic Gaiotto locus with an irreducible component of the nilpotent cone and proves the conormal interpretation.

\section*{Acknowledgements}

I am deeply grateful to Nigel Hitchin for introducing me to this problem and for the many inspiring discussions throughout my DPhil. I also thank Sergey Galkin for several interesting conversations and helpful comments related to this work. This work was completed while the author was a postdoctoral fellow at PUC-Rio, supported by CNPq through the PDJ grant 153359/2024-2.

\section{Symplectic representations and the Gaiotto locus}\label{section1}

\subfile{sections/sec1}

\section{Moduli of twisted pairs}\label{sec:2}

\subfile{sections/sec2}

\section{A Hitchin-type map for pairs}\label{sec:3}

\subfile{sections/sec3}

\section{The isotropy of the Gaiotto locus}\label{sec:isotropy}

\subfile{sections/sec4}

\section{A Lagrangian criterion}\label{sec:lagrangian-criterion}

\subfile{sections/sec5}

\section{Spinor moduli space}\label{sec:6}

\subfile{sections/sec6}

\section{The symplectic vortex equation}\label{sec:7}

\subfile{sections/sec7}

\section{Morse theory}\label{sec:8}

\subfile{sections/sec8}

\section{The Gaiotto locus and the nilpotent cone}\label{sec:gaiotto-component}

\subfile{sections/sec9}

 
\bibliographystyle{alpha}
\bibliography{Bibliography}

\end{document}

%% file: sections/sec1.tex
Let $G$ be a connected complex semisimple Lie group with Lie algebra $\lie{g}$, and let
\[
\rho : G \to \Sp (\V,\omega)
\]
be a symplectic representation on a finite-dimensional complex symplectic vector space
$(\V,\omega)$. We denote again by
\[
\rho : \lie{g} \to \lie{sp}(\V,\omega)
\]
the induced Lie algebra representation.

The associated moment map
\[
\mu_0 : \V \to \lie{g}^*
\]
is given by
\[
\inn{\mu_0(v)}{\xi}=\frac12\,\omega(\rho(\xi)v,v),
\qquad
v\in \V,\ \xi\in \lie{g}.
\]
Choosing a non-degenerate $\Ad$-invariant bilinear form $B$ on $\lie{g}$, we identify
$\lie{g}\cong \lie{g}^*$ and obtain a quadratic $G$-equivariant map
\[
\mu:\V\to \lie{g}.
\]

\begin{rmk}
Using $\omega$ to identify $\V\simeq \V^*$, one obtains
\[
\End(\V)\cong \V\otimes \V,
\]
and under this identification the symplectic Lie algebra corresponds to the symmetric square:
\[
\lie{sp}(\V,\omega)\cong \Sym^2(\V).
\]
\end{rmk}

Let $\Sigma$ be a compact Riemann surface of genus $g\geq 2$, let $K$ be its canonical bundle, and fix a square
root $K^{1/2}$. Given a holomorphic principal $G$-bundle $P$, we denote by
\[
V=P(\V)
\]
the associated symplectic vector bundle. Since $\mu$ is quadratic and $G$-equivariant, it induces a
bundle map
\[
\mu : V\otimes K^{1/2}\to \Ad(P)\otimes K.
\]
Hence every spinor
\[
\psi\in H^0(\Sigma,V\otimes K^{1/2})
\]
determines a Higgs field
\[
\mu(\psi)\in H^0(\Sigma,\Ad(P)\otimes K).
\]

Fix a topological type $d\in \pi_1(G)$ for the underlying principal $G$-bundle.

\begin{defin}
We define
\[
\cG^d_{(\rho,K^{1/2})}\subset \cM^d(G)
\]
to be the Zariski closure of the locus 
\[
\left\{
(P,\Phi)\in \cM^{d}(G)^{\mathrm{st}} \st 
\Phi=\mu(\psi)\text{ for some }0\neq \psi\in H^0(\Sigma,V\otimes K^{1/2})
\right\}.
\]
\end{defin}

\begin{rmk} \hfill
\begin{enumerate}[(a)] 
    \item The definition above is independent of the chosen representative of the stable Higgs bundle. Indeed, this follows immediately from the $G$-equivariance of the moment map.
    \item Under suitable hypotheses, the locus $\cG^d_{(\rho,K^{1/2})}$ will be shown to be Lagrangian (see Section~\ref{sec:lagrangian-criterion}).
    \item These loci arise in the physics literature in the work of Gaiotto, and we shall refer to
$\cG^d_{(\rho,K^{1/2})}$ as the \emph{Gaiotto locus} associated with the symplectic
representation $\rho$ and the theta characteristic $K^{1/2}$.
   \item The Gaiotto locus depends \emph{a priori} on the choice of theta characteristic. For the standard representation of $\Sp_{2n}(\C)$, if $K^{1/2}$ and
$K^{1/2}\otimes \alpha$ differ by a $2$-torsion line bundle
$\alpha\in \Pic^0(\Sigma)[2]$, then the corresponding loci are carried to one another
by the automorphism
\[
t_\alpha:\cM(\Sp_{2n}(\C))\to \cM(\Sp_{2n}(\C)),
\qquad
(V,\Phi)\mapsto (V\otimes \alpha,\Phi).
\]
\end{enumerate}
\end{rmk}

We record the main example for the present paper.

\begin{ex}\label{ex:standard-sp}
Let $\rho$ be the standard representation of $\Sp_{2n}(\C)$ on $\C^{2n}$, endowed with its
standard symplectic form. If we take
\[
B(\xi_1,\xi_2)=-\frac12\tr(\xi_1\xi_2)
\]
on $\lie{sp}_{2n}(\C)$, then
\[
\mu(v)=v\otimes v\in \Sym^2(\C^{2n})\cong \lie{sp}_{2n}(\C).
\]
Accordingly, for a symplectic vector bundle $(V,\omega)$ on $\Sigma$ and a spinor
$\psi\in H^0(\Sigma,V\otimes K^{1/2})$, the associated Higgs field is
\[
\Phi=\psi\otimes \psi.
\]
In particular,
\[
\Phi(v)=\omega(\psi,v)\psi,
\]
so $\Phi$ has rank at most one and satisfies $\Phi^2=0$. Hence the corresponding locus
\[
\cG_{(\rho,K^{1/2})}\subset \cM (\Sp_{2n}(\C))
\]
is contained in the nilpotent cone. The topological type has been omitted from the notation since the symplectic group is simply connected. 
\end{ex}

A second basic family is obtained from arbitrary linear representations.

\begin{ex}\label{ex:W-plus-Wdual}
Let $\tau:G\to GL(\W)$ be a finite-dimensional representation. Since $G$ is semisimple and
connected, we may assume $\tau:G\to SL(\W)$. Set
\[
\V=\W\oplus \W^*
\]
with its natural symplectic form
\[
\omega\bigl((u_1,\delta_1),(u_2,\delta_2)\bigr)
=
\inn{\delta_2}{u_1}-\inn{\delta_1}{u_2}.
\]
Then $\rho=\tau\oplus \tau^*$ is a symplectic representation, and the moment map is
characterized by
\[
B(\mu(u,\delta),\cdot)=\delta(\tau(\cdot)u).
\]

When $\W$ is the standard representation of $\SL_n(\C)$, a spinor is a pair
\[
\psi=(u,\delta),
\qquad
u\in H^0(\Sigma,W\otimes K^{1/2}),\ 
\delta\in H^0(\Sigma,W^*\otimes K^{1/2}),
\]
and the associated Higgs field is
\[
\mu(\psi)=\delta\otimes u-\frac{\inn{\delta}{u}}{n}\Id
\in H^0(\Sigma,\End_0(W)\otimes K).
\]
This example, however, behaves differently from Example~\ref{ex:standard-sp}; see
Remark~\ref{rmk:W-plus-Wdual-not-lagrangian}.
\end{ex}

\begin{rmk}\label{rmk:knop-structure}
A symplectic $G$-module is called \emph{indecomposable} if it is not isomorphic to a direct
sum of two non-zero symplectic $G$-modules. A basic structural result of Knop
\cite[Theorem 2.1]{knop1} states that every indecomposable symplectic
$G$-module is of exactly one of the following two types:
\begin{enumerate}[(i)]
\item it is irreducible as a $G$-module;
\item it is of the form
\[
\V=\W\oplus \W^*,
\]
where $\W$ is an irreducible $G$-module which does not admit a $G$-invariant symplectic
form, and $\V$ carries the natural symplectic structure
\[
\omega\bigl((u_1,\delta_1),(u_2,\delta_2)\bigr)
=
\inn{\delta_2}{u_1}-\inn{\delta_1}{u_2}.
\]
\end{enumerate}
Moreover, two symplectic $G$-modules which are isomorphic as $G$-modules are already
isomorphic as symplectic $G$-modules, and every symplectic $G$-module decomposes as a
direct sum of indecomposable symplectic $G$-modules, uniquely up to permutation.
\end{rmk}

%% file: sections/sec2.tex
Let $\rho:G\to \Sp (\V,\omega)$ be a symplectic representation, and fix a topological type $d\in \pi_1(G)$ for the underlying principal $G$-bundle.

A \emph{$(\rho,K^{1/2})$-pair} on $\Sigma$ is a pair $(P,\psi)$ consisting of a holomorphic
principal $G$-bundle $P$ of topological type $d$ and a spinor
\[
\psi\in H^0(\Sigma,P(\V)\otimes K^{1/2}).
\]
Two such pairs $(P_1,\psi_1)$ and $(P_2,\psi_2)$ are isomorphic if there is an isomorphism
of principal $G$-bundles $f:P_1\to P_2$ carrying $\psi_1$ to $\psi_2$.

To define stability, let $Q\subset G$ be a parabolic subgroup, let $\chi$ be an anti-dominant
character of $Q$, and let $\sigma\in H^0(\Sigma,P/Q)$ be a reduction of structure group.
Associated with $\chi$ is an element $s_\chi\in i\lie{h}$, where $\lie{h}$ is the Lie algebra
of a maximal compact subgroup of $G$. We then define
\[
\V^-_\chi
=
\left\{
v\in \V \st  \rho(e^{ts_\chi})v \text{ remains bounded as } t\to \infty
\right\}.
\]
This is a $Q$-invariant subspace, and hence determines a subbundle
\[
V^-_{\sigma,\chi}\subset P(\V).
\]

Similarly, one defines
\[
\V^0_\chi
=
\left\{
v\in \V \st  \rho(e^{ts_\chi})v=v \text{ for all } t\in \R
\right\},
\]
which is invariant under the Levi subgroup $L\subset Q$ and therefore determines, for every
further reduction $\sigma_L\in H^0(\Sigma,P/L)$, a subbundle
\[
V^0_{\sigma_L,\chi}\subset V^-_{\sigma,\chi}.
\]

\begin{defin}
A $(\rho,K^{1/2})$-pair $(P,\psi)$ is called:
\begin{enumerate}[(i)]
\item \emph{semistable} if for every parabolic subgroup $Q\subset G$, every reduction
$\sigma\in H^0(\Sigma,P/Q)$, and every non-trivial anti-dominant character $\chi$ of $Q$
such that
\[
\psi\in H^0(\Sigma,V^-_{\sigma,\chi}\otimes K^{1/2}),
\]
one has
\[
\deg(P)(\sigma,\chi)\geq 0;
\]

\item \emph{stable} if the inequality is always strict;

\item \emph{polystable} if it is semistable and, whenever $Q\subset G$ is a parabolic subgroup,
$\sigma\in H^0(\Sigma,P/Q)$ is a reduction, and $\chi$ is a non-trivial strictly anti-dominant
character of $Q$ such that
\[
\psi\in H^0(\Sigma,V^-_{\sigma,\chi}\otimes K^{1/2})
\qquad\text{and}\qquad
\deg(P)(\sigma,\chi)=0,
\]
there exists a further reduction $\sigma_L\in H^0(\Sigma,P/L)$ to the Levi subgroup
$L\subset Q$ such that
\[
\psi\in H^0(\Sigma,V^0_{\sigma_L,\chi}\otimes K^{1/2}).
\]
\end{enumerate}
\end{defin}

These are the standard stability conditions for twisted pairs in the sense of Schmitt
\cite{schmitt}; in the present setting they agree with the stability notion used in
\cite{GPGMiR09}, where they are related to gauge-theoretic equations via the
Hitchin--Kobayashi correspondence.

\begin{prop}
There exists a quasi-projective moduli space
\[
\cP^d_{(\rho,K^{1/2})}
\]
whose closed points correspond to $S$-equivalence classes of semistable
$(\rho,K^{1/2})$-pairs of topological type $d$.
\end{prop}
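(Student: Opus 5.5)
The plan is to obtain the moduli space as a special case of Schmitt's general GIT construction of moduli of decorated principal bundles and twisted pairs \cite{schmitt}, rather than to build it from scratch. The data is a triple $(G,\rho,K^{1/2})$. Here $\rho:G\to \Sp(\V,\omega)\subset GL(\V)$ is a linear representation, and the twisting line bundle is $K^{1/2}$. A $(\rho,K^{1/2})$-pair is then exactly a $\rho$-twisted pair with twisting bundle $K^{1/2}$ in Schmitt's sense: a principal $G$-bundle together with a section of the associated vector bundle tensored by a fixed line bundle. The first step is therefore to check that our definitions match his. This means identifying the subspaces $\V^-_\chi$ and the degree pairing $\deg(P)(\sigma,\chi)$ with his one-parameter-subgroup formulation of semistability. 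The stability parameter must be chosen so that no additional term involving $\psi$ appears; since $G$ is semisimple the centre contributes nothing, so the parameter is forced. With that identification, Schmitt's theorem gives a quasi-projective coarse moduli space of $S$-equivalence classes, and we take this as $\cP^d_{(\rho,K^{1/2})}$.

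In outline, the construction we are invoking runs as follows. Choose a faithful representation $G\hookrightarrow GL_N$. This turns principal bundles into vector bundles with a reduction, which Schmitt encodes as a decoration $\Sym^\bullet(\C^N\otimes\C^{N\vee})^G$ on the frame bundle. The spinor $\psi$ is added as a second decoration. By boundedness of the semistable objects of fixed type $d$, all underlying bundles, twisted by a large $\cO(m)$, are globally generated with vanishing $H^1$. They are therefore parametrized by a subscheme $\mathfrak T$ of a product of a Quot scheme with a projective bundle of sections. One then takes the GIT quotient of $\mathfrak T$ by $GL(H^0)$ with respect to a suitably weighted linearization. The reduction to our pairs uses the fact that $\rho$ is a linear representation, so twisted pairs form a closed condition inside the space of decorated bundles.

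The key step, and the main obstacle, is the comparison of GIT semistability on $\mathfrak T$ with the intrinsic semistability of pairs defined above, for $m\gg0$. The plan here is to follow Schmitt verbatim. First, the Hilbert--Mumford criterion reduces GIT semistability to one-parameter subgroups. Second, these correspond to weighted filtrations, that is, to pairs $(\sigma,\chi)$ with $\sigma$ a parabolic reduction. Third, boundedness of the destabilizing families lets one pass to the limit $m\to\infty$. Under this limit, the condition $\psi\in H^0(V^-_{\sigma,\chi}\otimes K^{1/2})$ is exactly the condition that the $\psi$-contribution to the Hilbert--Mumford weight is nonpositive. Closed orbits then correspond to polystable pairs. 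This gives the identification of closed points with $S$-equivalence classes, where $S$-equivalence means having the same associated polystable object, obtained via the Levi reductions in condition (iii).

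Finally, the quotient is quasi-projective rather than projective, because $\mathfrak T$ is only quasi-projective. The reason is that we do not compactify by allowing $\psi$ to degenerate projectively, and it is precisely $\psi\mapsto\lambda\psi$ that is non-proper. We will also note that, in the present setting, the result agrees with the moduli space used in \cite{GPGMiR09}. This agreement is what later allows us to invoke the Hitchin--Kobayashi correspondence.
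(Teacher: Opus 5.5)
Your proposal is correct and follows the same route as the paper. The paper's proof is just the citation \cite[Theorem 2.8.1.2]{schmitt}, identifying the moduli space with $\cM^{\chi\text{-ss}}(\rho,d,\underline{L})$ for $\chi=0$ and $L_i=K^{1/2}$ on every irreducible summand $\V_i$; this is exactly your identification, namely the forced zero stability parameter and the twisting bundle $K^{1/2}$.

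One side remark in your last paragraph is wrong, though the statement does not depend on it. The space need not be non-projective, and the scaling $\psi\mapsto\lambda\psi$ is not what causes non-properness: that is governed by the Hitchin-type map. Indeed, for the standard representation of $\Sp_{2n}(\C)$ the space is projective.
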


\begin{proof}
This is a special case of \cite[Theorem 2.8.1.2]{schmitt}. If
\[
\V=\V_1\oplus \cdots \oplus \V_N
\]
is the decomposition of the representation into irreducible summands, then in Schmitt's
notation our moduli space is
\[
\cM^{\chi\text{-ss}}(\rho,d,\underline{L}),
\]
with
\[
\chi=0
\qquad\text{and}\qquad
\underline{L}=(L_1,\dots,L_N),
\quad
L_i=K^{1/2}\ \text{for all }i.
\]
\end{proof}

There is a natural map from pairs to Higgs bundles:
\[
(P,\psi)\mapsto (P,\mu(\psi)).
\]
The following implication is fundamental for what follows.

\begin{prop}\label{prop:stability-comparison}
If the Higgs bundle $(P,\mu(\psi))$ is semistable, respectively stable, then the pair
$(P,\psi)$ is semistable, respectively stable.
\end{prop}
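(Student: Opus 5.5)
The plan is to argue by contraposition through the definitions. The aim is to show that every reduction which could destabilize the pair $(P,\psi)$ also appears among the reductions tested by Higgs stability, with the same degree. Recall that $(P,\Phi)$ is semistable as a $G$-Higgs bundle (in the sense of \cite{GPGMiR09}) if
\[
\deg(P)(\sigma,\chi)\geq 0
\]
for every parabolic $Q$, every reduction $\sigma\in H^0(\Sigma,P/Q)$ and every nontrivial anti-dominant character $\chi$ of $Q$ such that
\[
\Phi\in H^0(\Sigma,\Ad(P)^-_{\sigma,\chi}\otimes K).
\]
Here $\lie{g}^-_\chi$ is the subspace of $\lie g$ on which $\Ad(e^{ts_\chi})$ stays bounded as $t\to\infty$. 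It is the isotropy-type condition relative to the adjoint representation; note that $\lie g^-_\chi\supseteq \lie q$ for anti-dominant $\chi$. Stability requires the same inequality to be strict. The degree $\deg(P)(\sigma,\chi)$ depends only on $(P,\sigma,\chi)$ and not on the representation. So it suffices to prove the following containment: whenever $\psi\in H^0(\Sigma,V^-_{\sigma,\chi}\otimes K^{1/2})$, we have $\mu(\psi)\in H^0(\Sigma,\Ad(P)^-_{\sigma,\chi}\otimes K)$.

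\textbf{Key step: pointwise containment.} I would show that $\mu(\V^-_\chi)\subseteq \lie g^-_\chi$ on the level of vector spaces. Since $s_\chi\in i\lie h$, the operator $\rho(s_\chi)$ is diagonalizable with real eigenvalues. Decompose $\V=\bigoplus_\lambda \V_\lambda$ by eigenvalue, so that $\V^-_\chi=\bigoplus_{\lambda\le 0}\V_\lambda$. Similarly $\lie g=\bigoplus_\alpha \lie g_\alpha$ for $\ad(s_\chi)$, with $\lie g^-_\chi=\bigoplus_{\alpha\le0}\lie g_\alpha$. The moment map is quadratic and $G$-equivariant, so the associated symmetric bilinear map $\beta:\V\times\V\to\lie g$ satisfies
\[
\beta(\rho(e^{ts_\chi})v,\rho(e^{ts_\chi})w)=\Ad(e^{ts_\chi})\beta(v,w).
\]
Differentiating this identity, or comparing weights directly, gives $\beta(\V_\lambda,\V_{\lambda'})\subseteq\lie g_{\lambda+\lambda'}$. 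If $v\in\V^-_\chi$, then every component of $\mu(v)$ lies in some $\lie g_{\lambda+\lambda'}$ with $\lambda,\lambda'\le0$, so $\mu(v)\in\lie g^-_\chi$. One can phrase this more simply as well: $\Ad(e^{ts_\chi})\mu(v)=\mu(\rho(e^{ts_\chi})v)$ is the image under the continuous map $\mu$ of a bounded path, hence is itself bounded. This one-line argument is the one I would actually write.

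\textbf{Globalization.} The subspaces $\V^-_\chi$ and $\lie g^-_\chi$ are $Q$-invariant, and $\mu$ is $Q$-equivariant. Hence $\mu$ restricts to a fibrewise map of associated bundles $\sigma^*P_Q(\V^-_\chi)\otimes K^{1/2}\to \sigma^*P_Q(\lie g^-_\chi)\otimes K$, taking values in $\Ad(P)^-_{\sigma,\chi}\otimes K$. Consequently $\mu(\psi)$ is a section of the Higgs-side subbundle whenever $\psi$ is a section of the pair-side subbundle.

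\textbf{Conclusion and main obstacle.} Suppose $(P,\mu(\psi))$ is semistable and $(Q,\sigma,\chi)$ is any datum with $\psi\in H^0(V^-_{\sigma,\chi}\otimes K^{1/2})$. By the containment above, $(Q,\sigma,\chi)$ is also admissible for the Higgs bundle, and so $\deg(P)(\sigma,\chi)\ge0$. The same reasoning with strict inequalities gives the stable case. The main point I expect to need care is bookkeeping of conventions rather than substance. First, the Higgs stability notion must be formulated with the same $s_\chi$ and anti-dominance conventions, using $\lie g^-_\chi$ for the adjoint representation. Second, the identification $\lie g\cong\lie g^*$ via $B$ must be compatible with the weight decompositions. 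It is: $B$ is $\Ad$-invariant, so it pairs $\lie g_\alpha$ with $\lie g_{-\alpha}$, but $\mu$ itself is $\Ad$-equivariant after the identification, which is all the boundedness argument uses. I would check these conventions against \cite{GPGMiR09} before writing the proof.
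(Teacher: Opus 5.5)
Your proposal is correct and is essentially the paper's proof: the equivariance identity $\Ad(e^{ts_\chi})\mu(v)=\mu(\rho(e^{ts_\chi})v)$ gives $\mu(\V^-_\chi)\subseteq\lie{g}^-_\chi$, so every reduction tested for the pair is also tested for the Higgs bundle, with the same degree $\deg(P)(\sigma,\chi)$. Your weight-decomposition variant and the explicit $Q$-equivariant globalization step add detail that the paper leaves implicit.
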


\begin{proof}
Let $Q\subset G$ be a parabolic subgroup, let $\chi$ be an anti-dominant character of $Q$,
and let $\sigma$ be a reduction of $P$ to $Q$ such that
\[
\psi\in H^0(\Sigma,V^-_{\sigma,\chi}\otimes K^{1/2}).
\]
By equivariance of the moment map,
\[
\Ad(e^{ts_\chi})\mu(v)=\mu(\rho(e^{ts_\chi})v),
\]
so boundedness of $\rho(e^{ts_\chi})v$ implies boundedness of
$\Ad(e^{ts_\chi})\mu(v)$. Therefore
\[
\mu(\psi)\in H^0(\Sigma,\Ad(P)^-_{\sigma,\chi}\otimes K).
\]
Hence every destabilizing reduction for the pair is also a destabilizing reduction for the
Higgs bundle. The conclusion follows from semistability, respectively stability, of
$(P,\mu(\psi))$.
\end{proof}

\begin{rmk}
The converse to Proposition~\ref{prop:stability-comparison} fails in general. Indeed, for a
general symplectic representation the boundedness of
\[
\Ad(e^{ts})\mu(v)
\]
does not imply the boundedness of
\[
\rho(e^{ts})v.
\]
Thus the semistability of the Higgs bundle $(P,\mu(\psi))$ need not force the semistability of
the pair $(P,\psi)$. For instance, for the trivial representation the moment map is zero, and the image of the corresponding space of pairs inside $\cM(G)$ is just the zero-Higgs locus of principal $G$-bundles on $\Sigma$.
\end{rmk}

For the standard representation of $\Sp_{2n}(\C)$ one in fact gets an equivalence.

\begin{prop}\label{prop:standard-equivalence}
Consider the case where $G=\Sp_{2n}(\C)$ with the standard representation. Let $P$ be a holomorphic principal $\Sp_{2n}(\C)$-bundle on $\Sigma$, and let
\[
\psi\in H^0(\Sigma,P(\C^{2n})\otimes K^{1/2}).
\]
Then the pair $(P,\psi)$ is semistable, respectively stable, if and only if the associated
Higgs bundle $(P,\psi\otimes \psi)$ is semistable, respectively stable.
\end{prop}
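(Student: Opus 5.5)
One direction is already Proposition~\ref{prop:stability-comparison}: if $(P,\psi\otimes\psi)$ is semistable (resp.\ stable), then so is $(P,\psi)$. It remains to show the converse. The plan is to prove the converse implication at the level of the weight filtrations:
\[
\Ad(e^{ts_\chi})(v\otimes v) \text{ bounded as } t\to\infty \ \Longrightarrow\ \rho(e^{ts_\chi})v \text{ bounded.}
\]
Given this, every reduction $(\sigma,\chi)$ with $\psi\otimes\psi\in H^0(\Sigma,\Ad(P)^-_{\sigma,\chi}\otimes K)$ also satisfies $\psi\in H^0(\Sigma,V^-_{\sigma,\chi}\otimes K^{1/2})$. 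Hence the destabilizing conditions for the pair and for the Higgs bundle coincide, and the degree inequalities transfer verbatim, both strict and non-strict.

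For the linear-algebra step, fix $s=s_\chi\in i\lie{h}$. It acts semisimply on $\C^{2n}$ with real eigenvalues. Decompose $\C^{2n}=\bigoplus_\lambda \V_\lambda$ into eigenspaces, so that $\rho(e^{ts})$ acts by $e^{t\lambda}$ on $\V_\lambda$. Since $s\in\lie{sp}$, the form $\omega$ pairs $\V_\lambda$ with $\V_{-\lambda}$.

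Under $\lie{sp}_{2n}\cong\Sym^2(\C^{2n})$ the adjoint action is the induced action on $\Sym^2$. The eigenspaces of $\Ad(e^{ts})$ on $\Sym^2$ are therefore spanned by products $\V_\lambda\cdot\V_{\lambda'}$, with eigenvalue $e^{t(\lambda+\lambda')}$. Write $v=\sum v_\lambda$ and let $\lambda_{\max}$ be the largest $\lambda$ with $v_\lambda\neq0$. Then $v\otimes v$ has a nonzero component $v_{\lambda_{\max}}\otimes v_{\lambda_{\max}}$ in the weight-$2\lambda_{\max}$ part of $\Sym^2$. This uses that the square of a nonzero vector in $\Sym^2$ is nonzero, and that no other pair of weights can sum to $2\lambda_{\max}$. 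So boundedness of $\Ad(e^{ts})(v\otimes v)$ forces $2\lambda_{\max}\le0$, hence $\lambda_{\max}\le 0$, which is exactly boundedness of $\rho(e^{ts})v$.

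Applying this fibrewise to $\psi$ gives the required containment. One point needs care: the condition must hold at every point of $\Sigma$ and not only generically. Membership of a section in the subbundle $\Ad(P)^-_{\sigma,\chi}$ is pointwise, and the argument above is pointwise. At zeros of $\psi$ the condition on $\psi$ holds trivially, so no extra work is needed there. I expect this fibrewise step to be the only place requiring attention, and it is mild.

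The only real obstacle is checking that the definitions of $\V^-_\chi$ and $\Ad(P)^-_{\sigma,\chi}$ match the eigenvalue description above, namely as the sum of the nonpositive eigenspaces of $s_\chi$. This follows because $s_\chi$ lies in $i\lie{h}$ and hence is Hermitian in a unitary frame. With that in place, the stable case uses the same identification of the sets of reductions. For the polystable refinement, which the statement does not require, one would additionally compare the $\V^0$ conditions by the same weight argument.
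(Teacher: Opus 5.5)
Your proposal is correct and follows essentially the paper's own argument. The paper likewise observes that $(e^{ts}v)\otimes(e^{ts}v)$ stays bounded if and only if $e^{ts}v$ does, so the destabilizing reductions for the pair and for the Higgs bundle coincide; your highest-weight computation simply spells out this equivalence, which could also be read off directly from $|u\otimes u|=|u|^2$.
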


\begin{proof}
For the standard representation, the condition that $(e^{ts}v)\otimes (e^{ts}v)$ remain
bounded is equivalent to boundedness of $e^{ts}v$ itself. Thus the relevant destabilizing
conditions for pairs and for the associated Higgs bundle coincide.
\end{proof}

%% file: sections/sec3.tex
We now associate to the moduli space of twisted pairs a natural analogue of the Hitchin map.
The construction depends only on the invariant theory of the symplectic $G$-module $\V$.

Let
\[
\V=\V_1\oplus \cdots \oplus \V_N
\]
be the decomposition of $\V$ into irreducible $G$-modules. For each integer $e\geq 0$ one has
\[
\Sym^e(\V^*)
=
\bigoplus_{e_1+\cdots+e_N=e}
\Sym^{e_1}(\V_1^*)\otimes \cdots \otimes \Sym^{e_N}(\V_N^*),
\]
and hence
\[
\Sym^e(\V^*)^G
=
\bigoplus_{e_1+\cdots+e_N=e}
\left(
\Sym^{e_1}(\V_1^*)\otimes \cdots \otimes \Sym^{e_N}(\V_N^*)
\right)^G.
\]

Since $G$ is reductive, the invariant ring
\[
\C[\V]^G=\Sym(\V^*)^G
\]
is finitely generated. Choose homogeneous generators
\[
f_1,\dots,f_q\in \C[\V]^G,
\]
and suppose that
\[
f_i\in
\left(
\Sym^{e_{i,1}}(\V_1^*)\otimes \cdots \otimes \Sym^{e_{i,N}}(\V_N^*)
\right)^G.
\]
Set
\[
d_i=e_{i,1}+\cdots +e_{i,N}.
\]
Since $f_i$ is homogeneous of degree $d_i$, evaluation on a spinor
\[
\psi\in H^0(\Sigma,P(\V)\otimes K^{1/2})
\]
produces a section
\[
f_i(\psi)\in H^0\left(\Sigma,(K^{1/2})^{\otimes d_i}\right).
\]
When $d_i$ is even, this line bundle is $K^{d_i/2}$; for odd $d_i$, the chosen theta characteristic is part of the construction.
In this way one obtains a morphism
\begin{equation}\label{eq:hitchin-type-pairs}
h_{\rho}:\cP^d_{(\rho,K^{1/2})}\to \cA_{\mathrm{pair}}
:=
\bigoplus_{i=1}^q H^0\left(\Sigma,(K^{1/2})^{\otimes d_i}\right),
\qquad
(P,\psi)\mapsto \bigl(f_1(\psi),\dots,f_q(\psi)\bigr),
\end{equation}
which we shall call the \emph{Hitchin-type map} for $(\rho,K^{1/2})$-pairs.

\begin{rmk}\label{rmk:knop-little-weyl}
Knop's invariant-theoretic study of symplectic representations provides a more canonical
framework for the algebra appearing here. More precisely, for a symplectic $G$-module $\V$
there is a subspace
\[
\mathfrak a^*\subset \mathfrak t^*
\]
and a finite reflection group $W_{\V}$ acting on $\mathfrak a^*$ such that the algebraic closure
of the subring of $\C[\V]^G$ obtained by pulling back coadjoint invariants via the moment map is
\[
\C[\mathfrak a^*]^{W_{\V}}.
\]
The group $W_{\V}$ is the \emph{little Weyl group} of the symplectic representation. In this
language, the Hitchin-type map above should be viewed as the algebro-geometric avatar, over the
curve $\Sigma$, of the invariant moment-map picture for the symplectic $G$-module $\V$; see
\cite{knop-invariants}. In particular, Knop proves that $W_{\V}$ is generated by reflections and
that the corresponding invariant-moment-map quotient is equidimensional.
\end{rmk}

\begin{prop}
The morphism \eqref{eq:hitchin-type-pairs} is projective.
\end{prop}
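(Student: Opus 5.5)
The plan is to use the valuative criterion of properness. Since $\cP^d_{(\rho,K^{1/2})}$ is quasi-projective by the existence result above, and $\cA_{\mathrm{pair}}$ is affine, a morphism from a quasi-projective variety to an affine space is projective as soon as it is proper. So it suffices to prove that $h_\rho$ is proper, following Nitsure's properness argument for the Hitchin map and its extension to twisted pairs by Schmitt.

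\textbf{First step: reduce to Schmitt's GIT construction.} The moduli space in \cite[Theorem 2.8.1.2]{schmitt} is a GIT quotient $\mathfrak{Q}/\!\!/\mathrm{SL}$ of a parameter scheme $\mathfrak{Q}$. This scheme parametrizes bundles with level structure together with spinors. The invariants $f_i$ induce an $\mathrm{SL}$-invariant morphism $\mathfrak{Q}\to\cA_{\mathrm{pair}}$, which descends to $h_\rho$. I would then apply Langton-type semistable reduction. Let $R$ be a DVR with fraction field $F$. Given a family of semistable pairs $(P_F,\psi_F)$ over $\Sigma_F$ whose image $h_\rho(P_F,\psi_F)$ extends to an $R$-point of $\cA_{\mathrm{pair}}$, we must find, after a finite extension of $R$, a semistable pair over $\Sigma_R$ extending it.

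\textbf{Second step: boundedness of $\psi$ over the special fibre.} First extend $P_F$ to some $G$-bundle $P_R$ on $\Sigma_R$; this is possible because $G$-bundles on curves extend after a finite base change. The spinor $\psi_F$ then extends as a meromorphic section with a pole of order $k$ along the special fibre. If $k>0$, rescale by $\varpi^k$ to get a nonzero limit $\bar\psi$ on the central fibre. Since the $f_i$ are homogeneous of positive degree and $f_i(\psi_F)$ extends, we get $f_i(\bar\psi)=0$ for all $i$, so $\bar\psi$ lies in the nullcone of $\V$ pointwise. By Hilbert–Mumford, at each point there is a one-parameter subgroup driving $\bar\psi$ to $0$. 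Using the Kempf optimal one-parameter subgroup, which is canonical and therefore globalizes to a reduction $\sigma$ of $P_R|_{\Sigma_0}$ to the optimal parabolic, we get $\bar\psi\in H^0(V^-_{\sigma,\chi}\otimes K^{1/2})$ with strictly contracting weight. I would then perform an elementary (Hecke) modification of $P_R$ along the special fibre in the direction of $\sigma,\chi$. This lowers the pole order of $\psi$, as in Nitsure's and Langton's arguments. Iterating and using a discrepancy/degree bound would eventually make $\psi$ regular.

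\textbf{Third step: semistable reduction.} Once $\psi$ extends regularly, apply Langton's algorithm for twisted pairs (Schmitt proves semistable reduction in this setting, which is how properness of the GIT quotient over the affine invariant base is obtained) to make the central fibre semistable.

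\textbf{Main obstacle.} The hard part is the second step: showing that finitely many Hecke modifications suffice, i.e. that the pole order strictly decreases. If this becomes technical, I would instead cite the general result that for a GIT quotient $X/\!\!/H$ of a projective-over-affine scheme, the induced map to $\mathrm{Spec}$ of invariants is projective. Schmitt's construction realizes $\cP^d$ as $\mathrm{Proj}$ of a graded ring over $\cA_{\mathrm{pair}}$, namely the ring generated by the invariants $f_i$ and the bundle-theoretic linearization. Projectivity of $h_\rho$ then follows directly.
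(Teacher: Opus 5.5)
The paper proves this by a one-line appeal to \cite[Prop.~2.8.1.4]{schmitt}, and the last sentence of your fallback amounts to the same thing. The argument you actually develop, however, has a genuine gap, exactly where you suspect it.

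Several parts are fine:
\begin{enumerate}[(a)]
\item Your preliminary reduction holds: a proper morphism from a quasi-projective variety to an affine target is projective.
\item The rescaled leading term $\bar\psi$ does lie pointwise in the nullcone.
\item The Kempf parabolic, taken at the generic point of $\Sigma_0$ and extended using properness of $G/Q$, does give a reduction $\sigma$ of $P_R|_{\Sigma_0}$.
\end{enumerate}

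What is not justified is that the Hecke modification ``lowers the pole order''. The modification only sees data on $\Sigma_0$. Near the generic point of $\Sigma_0$, write $\psi=\varpi^{-k}(\bar\psi+\varpi\psi_1+\varpi^2\psi_2+\cdots)$ in a frame adapted to a lift of $\sigma$, and decompose by the weights of the cocharacter. The modification multiplies weight-$w$ components by $\varpi^{w}$, in the convention where $\bar\psi$ has positive weights. This helps $\bar\psi$, but it turns a weight $w\le -1$ component of $\varpi\psi_1$ into a pole of order $k-1-w\ge k$. Nothing forces $\psi_1,\psi_2,\dots$ into the non-negative weight part. (Also, when the cocharacter has weights of absolute value $\ge2$ on $\lie g$, a modification built only from a reduction on $\Sigma_0$ depends on the chosen lift.)

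To force a decrease you must take fractional steps $\lambda(\varpi^{1/N})$ with $N$ exceeding the weights, that is, ramified base changes. Each step then gains only something of order $1/N$, and the new central fibre carries new Kempf data. You give no discrete invariant ruling out an infinite sequence of ever smaller decreases. The unspecified ``discrepancy/degree bound'' is a boundedness theorem for pairs with prescribed invariants. It plays the role of the Cayley--Hamilton step in Nitsure's argument for Higgs bundles, which has no direct counterpart for a general $\V$. So it is the actual content of the proposition, not a technicality.

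The fallback, as phrased, does not close the gap either. General GIT gives that a quotient of a scheme projective over an affine $Y$ is projective over $Y/\!\!/\mathrm{SL}$. It does not by itself exhibit $\cP^d$ as a $\mathrm{Proj}$ over $\cA_{\mathrm{pair}}$. The map $h_\rho$ is defined by evaluating the $f_i$ on spinors of varying bundles, so you would still need the induced map from $Y/\!\!/\mathrm{SL}$ (or whatever factorization you use) to $\cA_{\mathrm{pair}}$ to be proper, which is the same boundedness statement again. Compare Nitsure's construction of Higgs moduli: there the GIT quotient is only quasi-projective, and properness of the Hitchin map is proved by a separate argument. Likewise, your Step~3 leans on a semistable-reduction theorem attributed to Schmitt without a precise reference.

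The complete version of your proposal is simply to invoke Schmitt's projectivity result for the Hitchin-type morphism of twisted pairs, as the paper does. With that citation, no valuative argument is needed.
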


\begin{proof}
This is a special case of \cite[Prop.~2.8.1.4]{schmitt}.
\end{proof}

The geometry of $\cP^d_{(\rho,K^{1/2})}$ depends strongly on the invariant theory of the
representation $\rho$.

\begin{cor}\label{cor:projective-standard}
For the standard representation of $\Sp_{2n}(\C)$, the moduli space
\[
\cP^d_{(\rho,K^{1/2})}
\]
is projective.
\end{cor}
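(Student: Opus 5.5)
The plan is to deduce projectivity from the projectivity of the Hitchin-type map $h_\rho$ in the preceding proposition. It suffices to show that $h_\rho$ is constant, so that its image is a single point. Then $\cP^d_{(\rho,K^{1/2})}$ is the fibre of a projective morphism over a point, and hence projective. Concretely, I will show that for the standard representation of $\Sp_{2n}(\C)$ on $\V=\C^{2n}$ the invariant ring $\C[\V]^G$ reduces to the constants. Then $\cA_{\mathrm{pair}}$ is either $0$ or consists only of degree-zero summands, which contribute constants, and the map is constant.

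The key step is the invariant-theoretic claim $\C[\C^{2n}]^{\Sp_{2n}(\C)}=\C$. I would argue via orbits. $\Sp_{2n}(\C)$ acts transitively on $\C^{2n}\setminus\{0\}$: given a nonzero $v$, one completes it to a symplectic basis by choosing $w$ with $\omega(v,w)=1$ and extending on $\langle v,w\rangle^{\perp}$. An invariant polynomial is therefore constant on the dense orbit $\C^{2n}\setminus\{0\}$, and so constant everywhere by continuity. Equivalently, the first fundamental theorem for $\Sp$ says invariants of several vectors are generated by the pairings $\omega(v_i,v_j)$, and for a single vector $\omega(v,v)=0$. Hence one may choose $q=0$ generators, giving $\cA_{\mathrm{pair}}=0$.

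It follows that $h_\rho:\cP^d_{(\rho,K^{1/2})}\to\mathrm{pt}$ is projective, that is, $\cP^d_{(\rho,K^{1/2})}$ is a projective variety. The moduli space is already known to be quasi-projective by the existence proposition of Section~\ref{sec:2}, so no further input is needed. The only point requiring care is to apply Schmitt's properness statement \cite[Prop.~2.8.1.4]{schmitt} correctly in the degenerate case where the invariant ring is trivial. There the target of the Hitchin-type map is a point, and properness of the map means properness of the space itself. The main obstacle is therefore bookkeeping rather than mathematics: making sure Schmitt's result is formulated with an affine target built from a (possibly empty) set of generators, so that it covers this case. Should that be unclear, an alternative is to invoke Proposition~\ref{prop:standard-equivalence}. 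That proposition embeds the semistable pairs into the nilpotent cone of $\cM(\Sp_{2n}(\C))$, which is projective since it is the fibre of the proper Hitchin map, and one can then check closedness of the image together with finiteness of the map $(P,\psi)\mapsto(P,\psi\otimes\psi)$, whose fibres are $\psi\mapsto\pm\psi$.
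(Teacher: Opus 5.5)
Your proposal is correct and follows the paper's proof: transitivity of $\Sp_{2n}(\C)$ on $\C^{2n}\setminus\{0\}$ gives $\C[\C^{2n}]^{\Sp_{2n}(\C)}=\C$, so the Hitchin-type base is a point, and projectivity of $h_\rho$ from \cite[Prop.~2.8.1.4]{schmitt} then gives projectivity of $\cP^d_{(\rho,K^{1/2})}$. The fallback through the nilpotent cone is not needed; as written it is also incomplete, because closedness of the image would require its own argument, which you do not supply.
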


\begin{proof}
The group $\Sp_{2n}(\C)$ acts transitively on $\C^{2n}\setminus \{0\}$. Hence every invariant
polynomial on $\C^{2n}$ is constant, so
\[
\C[\C^{2n}]^{\Sp_{2n}(\C)}=\C.
\]
Therefore the Hitchin-type base is a point, and the morphism
\[
h_{\rho}:\cP^d_{(\rho,K^{1/2})}\to \mathrm{pt}
\]
is simply the structure morphism. Since $h_{\rho}$ is projective,
$\cP^d_{(\rho,K^{1/2})}$ is projective.
\end{proof}

There is a natural relation between the Hitchin-type map for pairs and the ordinary Hitchin
map for Higgs bundles. Indeed, the moment map
\[
\mu:\V\to \lie{g}
\]
is $G$-equivariant, so pullback by $\mu$ defines a homomorphism
\[
\mu^*:\C[\lie{g}]^G\to \C[\V]^G.
\]
Thus every invariant polynomial on $\lie{g}$ yields an invariant polynomial on $\V$, and
the invariants detected by the ordinary Hitchin map factor through the Hitchin-type map for
pairs.

\begin{rmk}
The Hitchin-type map generally carries more information than the composition with the ordinary
Hitchin map. In particular, the projectivity or non-projectivity of the moduli space of pairs is
governed by the full invariant ring $\C[\V]^G$, not only by the subring
\[
\mu^*(\C[\lie{g}]^G)\subseteq \C[\V]^G.
\]
\end{rmk}

\begin{ex}\label{ex:nonprojective-pairs}
Consider the symplectic representation
\[
\V=\W\oplus \W^*
\]
from Example~\ref{ex:W-plus-Wdual}. The natural pairing
\[
p(u,\delta)=\inn{\delta}{u}
\]
is a nonconstant $G$-invariant polynomial on $\V$. Therefore the Hitchin-type base is positive-dimensional. A projective component cannot carry this function nontrivially.

When $\W$ is the standard representation of $\SL_n(\C)$, the moment map is
\[
\mu(u,\delta)=\delta\otimes u-\frac{\inn{\delta}{u}}{n}\Id,
\]
and one checks directly that the pullbacks of the standard generators
\[
p_i(x)=\tr(x^i), \qquad i\geq 2,
\]
are all divisible by the basic invariant $p(u,\delta)=\inn{\delta}{u}$. Thus the ordinary
Hitchin invariants coming from $\mu(\psi)$ do not exhaust the invariant theory of pairs in
this example.
\end{ex}

%% file: sections/sec4.tex
We now show that the Gaiotto locus is isotropic in $\cM^d(G)$. The argument is essentially
the one proved by Hitchin in \cite{spinors}, but we present it in a \v Cech-theoretic form
adapted to our setting.

Let $(P,\psi)$ be a $(\rho,K^{1/2})$-pair and set
\[
\Phi=\mu(\psi)\in H^0(\Sigma,\Ad(P)\otimes K).
\]
The deformation complexes of $(P,\psi)$ and $(P,\Phi)$ are
\[
C^\bullet_{\mathrm{pair}}(P,\psi) : 
\Ad(P)\xrightarrow{\ \rho(\cdot)(\psi)\ }P(\V)\otimes K^{1/2}
\]
and
\[
C^\bullet(P,\Phi) : 
\Ad(P)\xrightarrow{\ [\cdot,\Phi]\ }\Ad(P)\otimes K,
\]
respectively. In particular, the tangent space to $\cM^d(G)$ at a smooth point $(P,\Phi)$ is
naturally identified with
\[
\K^1(\Sigma,C^\bullet(P,\Phi));
\]
see, for example, \cite{bis}. Using a non-degenerate $\Ad$-invariant bilinear form
$B$ on $\lie{g}$, one obtains the natural holomorphic symplectic form on the smooth locus
of $\cM^d(G)$, namely the extension of the canonical symplectic form on the cotangent bundle
of the moduli space of principal $G$-bundles.

Consider the tensor product complex $C^\bullet(P,\Phi)\otimes C^\bullet(P,\Phi)$:
\[
\begin{aligned}
\Ad(P)\otimes \Ad(P)
&\xrightarrow{\ \beta\ }
\bigl(\Ad(P)\otimes \Ad(P)\otimes K\bigr)
\oplus
\bigl((\Ad(P)\otimes K)\otimes \Ad(P)\bigr) \\
&\xrightarrow{\ \alpha\ }
(\Ad(P)\otimes K)\otimes (\Ad(P)\otimes K),
\end{aligned}
\]
where
\[
\beta(x\otimes y)=\bigl(x\otimes [y,\Phi],[x,\Phi]\otimes y\bigr),
\qquad x,y\in \Ad(P),
\]
and
\[
\alpha(x\otimes y,\,y'\otimes x')
=
[x,\Phi]\otimes y-y'\otimes [x',\Phi],
\]
with
\[
x,x'\in \Ad(P),
\qquad
y,y'\in \Ad(P)\otimes K.
\]
There is a morphism of complexes
\[
f^\bullet :  C^\bullet(P,\Phi)\otimes C^\bullet(P,\Phi)\to K[-1]
\]
whose only non-zero component is
\[
f^1(x\otimes y,\,y'\otimes x')=B(x,y')-B(y,x').
\]

The holomorphic symplectic form is obtained as follows. First, the natural cup product in
hypercohomology yields a pairing
\[
\K^1(\Sigma,C^\bullet(P,\Phi))
\otimes
\K^1(\Sigma,C^\bullet(P,\Phi))
\longrightarrow
\K^2\bigl(\Sigma,C^\bullet(P,\Phi)\otimes C^\bullet(P,\Phi)\bigr).
\]
Composing with the map induced in hypercohomology by the morphism of complexes $f^\bullet$
gives
\[
\K^1(\Sigma,C^\bullet(P,\Phi))
\otimes
\K^1(\Sigma,C^\bullet(P,\Phi))
\longrightarrow
\K^2(\Sigma,K[-1])=H^1(\Sigma,K)\cong \C,
\]
which is precisely the holomorphic symplectic form; see \cite{bis}.

Fix an open cover $\cU=\{U_i\}$ of $\Sigma$, and write
\[
U_{ij}:=U_i\cap U_j,
\qquad
U_{ijk}:=U_i\cap U_j\cap U_k.
\]
Let
\[
v=(\{s_{ij}\},\{t_i\}),
\qquad
v'=(\{s'_{ij}\},\{t'_i\})
\]
be classes in $\K^1(\Sigma,C^\bullet(P,\Phi))$. Thus
\[
s_{ij}\in H^0(U_{ij},\Ad(P)),
\qquad
t_i\in H^0(U_i,\Ad(P)\otimes K),
\]
and similarly for $v'$, with
\begin{align*}
s_{ij}+s_{jk}&=s_{ik}
&&\text{on }U_{ijk}, \\
t_i-t_j+[s_{ij},\Phi]&=0
&&\text{on }U_{ij}.
\end{align*}
The holomorphic symplectic pairing of $v$ and $v'$ is represented by the \v Cech $1$-cocycle
\[
\bigl\{\,B(s_{ij},t'_j)-B(s'_{ij},t_i)\,\bigr\}.
\]

Suppose now that $(P,\Phi)$ lies in the smooth locus of $\cG^d_{(\rho,K^{1/2})}$, so that
\[
\Phi=\mu(\psi)
\]
for some non-zero
\[
\psi\in H^0(\Sigma,P(\V)\otimes K^{1/2}).
\]
Let
\[
v=(\{s_{ij}\},\{t_i\}),
\qquad
v'=(\{s'_{ij}\},\{t'_i\})
\]
be tangent vectors to $\cG^d_{(\rho,K^{1/2})}$ at $(P,\Phi)$. By definition, there exist local
infinitesimal deformations
\[
u_i,u'_i\in H^0(U_i,P(\V)\otimes K^{1/2})
\]
such that
\begin{equation}\label{eq:spinor-constraint}
u_i-u_j+\rho(s_{ij})\psi=0
\qquad\text{on }U_{ij},
\end{equation}
and
\begin{equation}\label{eq:spinor-constraint-prime}
u'_i-u'_j+\rho(s'_{ij})\psi=0
\qquad\text{on }U_{ij}.
\end{equation}
Moreover, the induced variation of the Higgs field is
\[
t_i=d\mu_\psi(u_i),
\]
and differentiation of
\[
B(\mu(\psi),\xi)=\frac12\,\omega(\rho(\xi)\psi,\psi)
\]
shows that
\begin{equation}\label{eq:dmu-formula}
B(t_i,\xi)=\omega\bigl(\rho(\xi)\psi,u_i\bigr)
\qquad
\text{for all }\xi\in H^0(U_i,\Ad(P)).
\end{equation}
Indeed,
\[
B(d\mu_\psi(u_i),\xi)
=
\frac12\,\omega(\rho(\xi)u_i,\psi)
+
\frac12\,\omega(\rho(\xi)\psi,u_i)
=
\omega(\rho(\xi)\psi,u_i),
\]
since $\rho(\xi)\in \lie{sp}(\V,\omega)$.

Substituting \eqref{eq:dmu-formula} into the cocycle above, we obtain
\begin{align*}
B(s_{ij},t'_j)-B(s'_{ij},t_i)
&=
\omega\bigl(\rho(s_{ij})\psi,u'_j\bigr)
-
\omega\bigl(\rho(s'_{ij})\psi,u_i\bigr) \\
&=
\omega(u_j-u_i,u'_j)-\omega(u'_j-u'_i,u_i)
\qquad
\text{by \eqref{eq:spinor-constraint} and \eqref{eq:spinor-constraint-prime}} \\
&=
\omega(u_j,u'_j)-\omega(u_i,u'_i).
\end{align*}
Thus the cocycle is a \v Cech coboundary, namely the coboundary of
\[
\{\omega(u_i,u'_i)\}.
\]
Hence the holomorphic symplectic form vanishes on tangent vectors to
$\cG^d_{(\rho,K^{1/2})}$.

The Higgs bundle moduli space also carries the canonical holomorphic one-form
\[
\theta\in H^0(\cM^d(G)_{\mathrm{sm}},T^*\cM^d(G)),
\]
defined by
\[
\theta_{(P,\Phi)}(v)=\mathrm{SD}(q(v)\otimes \Phi),
\]
where
\[
q :  \K^1(\Sigma,C^\bullet(P,\Phi))\to H^1(\Sigma,\Ad(P))
\]
is the map forgetting the Higgs-field variation, and $\mathrm{SD}$ denotes Serre duality.
It is standard that $d\theta$ is the holomorphic symplectic form; see \cite{bis,bohr}.

We now show directly that $\theta$ vanishes on $\cG^d_{(\rho,K^{1/2})}$. Let
\[
v=(\{s_{ij}\},\{t_i\})
\]
be a tangent vector to $\cG^d_{(\rho,K^{1/2})}$ at $(P,\Phi=\mu(\psi))$, and let
\[
u_i\in H^0(U_i,P(\V)\otimes K^{1/2})
\]
be a compatible infinitesimal deformation of $\psi$. Then
\begin{align*}
\theta_{(P,\mu(\psi))}(v)
&=
\bigl\{\,B(s_{ij},\mu(\psi))\,\bigr\} \\
&=
\left\{
\frac12\,\omega\bigl(\rho(s_{ij})\psi,\psi\bigr)
\right\} \\
&=
\left\{
\frac12\,\omega(u_j-u_i,\psi)
\right\}
\qquad
\text{by \eqref{eq:spinor-constraint}} \\
&=
\delta\!\left(\left\{-\frac12\,\omega(u_i,\psi)\right\}\right).
\end{align*}
Therefore
\[
\theta_{(P,\mu(\psi))}(v)=0
\]
in $H^1(\Sigma,K)$.

\begin{prop}\label{prop:isotropic}
The Gaiotto locus $\cG^d_{(\rho,K^{1/2})}\subset \cM^d(G)$ is isotropic. Moreover, the
canonical holomorphic one-form $\theta$ vanishes on the smooth locus of the Gaiotto locus.
\end{prop}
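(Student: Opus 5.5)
The plan is to assemble the two Čech computations carried out just before the statement into a proof. The one genuinely new step is justifying that every tangent vector to the Gaiotto locus, at a suitable point, admits compatible local spinor deformations $u_i$ satisfying \eqref{eq:spinor-constraint}.

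\emph{Step 1: reduce to a dense open set of the smooth locus.} Both properties are closed conditions on the smooth locus of $\cG:=\cG^d_{(\rho,K^{1/2})}$. These are the vanishing of the restriction of the symplectic form to $T\cG$, and the vanishing of $\theta|_{\cG}$; each says that a section of a coherent sheaf on the reduced variety $\cG_{\mathrm{sm}}$ vanishes. It therefore suffices to check them on a dense open subset of each irreducible component of $\cG$. By definition $\cG$ is the closure of the image of the constructible locus of stable Higgs bundles of the form $(P,\mu(\psi))$. By Proposition~\ref{prop:stability-comparison}, each such point is the image of a stable pair. Hence this locus is the image of an open subset $\cP^{\circ}$ of the pair moduli space $\cP^d_{(\rho,K^{1/2})}$ under the morphism
\[
m:(P,\psi)\mapsto (P,\mu(\psi)).
\]
I intend to use generic smoothness (characteristic zero). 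On each component, replace $\cP^{\circ}$ by its reduced smooth locus, working locally on an étale or Kuranishi-type family of pairs if the moduli space is only a coarse quotient near points with automorphisms. Then $m$ restricted to this locus is a submersion onto a dense open subset of the smooth locus of the corresponding component of $\cG$.

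\emph{Step 2: lift tangent vectors.} At such a generic point, every tangent vector $v$ to $\cG$ is $dm$ of a tangent vector to the family of pairs. A tangent vector to the pair deformation complex $C^\bullet_{\mathrm{pair}}(P,\psi)$ is a Čech class $(\{s_{ij}\},\{u_i\})$ satisfying $u_i-u_j+\rho(s_{ij})\psi=0$, which is exactly \eqref{eq:spinor-constraint}. The map $dm$ is induced by the morphism of complexes $C^\bullet_{\mathrm{pair}}(P,\psi)\to C^\bullet(P,\Phi)$ that is the identity on $\Ad(P)$ and $d\mu_\psi$ in degree one. The identity $[\xi,\mu(\psi)]=d\mu_\psi(\rho(\xi)\psi)$, obtained by differentiating $G$-equivariance, guarantees that this really is a chain map. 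Hence $v$ is represented by $(\{s_{ij}\},\{d\mu_\psi(u_i)\})$, which is precisely the situation assumed in the computation above.

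\emph{Step 3: conclude.} Formula \eqref{eq:dmu-formula} and the displayed calculation show that the cocycle representing the symplectic pairing of two such vectors is the coboundary of $\{\omega(u_i,u'_i)\}$. It also shows that the cocycle representing $\theta(v)$ is the coboundary of $\{-\tfrac12\omega(u_i,\psi)\}$. Both classes therefore vanish in $H^1(\Sigma,K)$ on a dense open subset of the smooth locus, and by Step 1 they vanish on the whole smooth locus. The isotropy also follows from $d\theta$ being the symplectic form together with $\theta|_{\cG_{\mathrm{sm}}}=0$, which gives a consistency check.

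I expect Step 1 to be the main obstacle. The difficulty is the passage from points of the Zariski closure to points of the form $\mu(\psi)$ with liftable tangent vectors, in particular handling singular or non-reduced points of $\cP^d$ and the generic-submersion argument. The closedness reduction is what sidesteps boundary points of $\cG$ that are not of Gaiotto form.
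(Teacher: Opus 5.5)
Your proposal is correct and is essentially the paper's argument. The core is the same pair of \v Cech computations: the symplectic cocycle is the coboundary of $\{\omega(u_i,u'_i)\}$, and $\theta(v)$ is the coboundary of $\{\tfrac12\omega(u_i,\psi)\}$ up to sign. The paper simply asserts that a smooth point of $\cG^d_{(\rho,K^{1/2})}$ has the form $(P,\mu(\psi))$ and that its tangent vectors lift to spinor deformations ``by definition''. Your Steps~1--2 actually justify this. The reduction to a dense open set is valid because $\Omega^1$ and $\Omega^2$ of $\cG_{\mathrm{sm}}$ are locally free, and generic smoothness on a family of pairs with a Kodaira--Spencer map to $\K^1(\Sigma,C^\bullet_{\mathrm{pair}}(P,\psi))$ supplies the lifts.
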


%% file: sections/sec5.tex
We now study the differential of the moment-map construction and isolate a local
criterion ensuring that the corresponding Gaiotto locus is Lagrangian.

There is a functorial assignment
\[
(P,\psi)\longmapsto (P,\mu(\psi))
\]
from pairs to Higgs bundles. Since a semistable pair need not give a semistable Higgs
bundle, this assignment should not be regarded as a morphism
$\cP^d_{(\rho,K^{1/2})}\to \cM^d(G)$ on the whole coarse moduli space. We restrict it to the open locus
\[
\cP^{d,\mathrm{H-st}}_{(\rho,K^{1/2})}
:=
\left\{
(P,\psi)\in \cP^d_{(\rho,K^{1/2})}
\st 
(P,\mu(\psi)) \text{ is stable as a Higgs bundle}
\right\},
\]
where Proposition~\ref{prop:stability-comparison} implies that the pair is stable. On this
locus the assignment descends to a morphism of coarse moduli spaces
\[
F_\rho : \cP^{d,\mathrm{H-st}}_{(\rho,K^{1/2})}\to \cM^d(G),
\qquad
(P,\psi)\mapsto (P,\mu(\psi)).
\]
At a point $(P,\psi)$ of this locus, with $\Phi=\mu(\psi)$, the differential of $F_\rho$ is induced by the natural morphism of complexes
\[
C^\bullet_{\mathrm{pair}}(P,\psi)\to C^\bullet(P,\Phi),
\]
whose degree-$0$ component is the identity on $\Ad(P)$, and whose degree-$1$ component is
\[
d\mu_\psi: P(\V)\otimes K^{1/2} \to \Ad(P)\otimes K.
\]
Indeed, equivariance of the moment map gives
\[
d\mu_\psi\bigl(\rho(\xi)\psi\bigr)=[\xi,\mu(\psi)]
\qquad
\text{for all }\xi\in \Ad(P),
\]
so the two differentials are compatible.

\subsection*{Automorphisms and simplicity}

\begin{lem}\label{lem:aut-inclusion}
For every $(\rho,K^{1/2})$-pair $(P,\psi)$, one has
\[
\Aut(P,\psi)\subseteq \Aut(P,\mu(\psi)).
\]
\end{lem}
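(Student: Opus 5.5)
The plan is to argue directly from $G$-equivariance of the moment map, as in Remark (a) of Section~\ref{section1}. First I will fix what the automorphism groups are. An automorphism of the pair $(P,\psi)$ is a gauge transformation $f\in\Aut(P)$, i.e.\ a holomorphic section of the adjoint group bundle $\Ad(P)=P\times_G G$ (with $G$ acting on itself by conjugation). It must satisfy
\[
\rho(f)\psi=\psi
\]
as sections of $P(\V)\otimes K^{1/2}$. Likewise, an automorphism of the Higgs bundle $(P,\Phi)$ is such an $f$ with $\Ad(f)\Phi=\Phi$ in $H^0(\Sigma,\Ad(P)\otimes K)$. Both groups are therefore subgroups of the same group $\Aut(P)$, and the inclusion is meant inside it.

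The key step is pointwise. For $g\in G$ and $v\in\V$, the $G$-equivariance of $\mu:\V\to\lie{g}$ gives $\mu(\rho(g)v)=\Ad(g)\mu(v)$. This holds for $\mu_0$ from the invariance of $\omega$ under $\rho(G)$, and it passes to $\mu$ because $B$ is $\Ad$-invariant. Since this identity is $G$-equivariant in the pair $(g,v)$, it globalizes over $P$. Concretely, on a local trivialization of $P$ the gauge transformation $f$ is a $G$-valued function and $\psi$ is a $\V\otimes K^{1/2}$-valued function. The twist by $K^{1/2}$ is harmless: $\mu$ is quadratic, so it sends $K^{1/2}$-valued vectors to $K$-valued elements of $\lie{g}$, and $f$ acts trivially on the line bundle factor. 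The bundle-level identity is therefore
\[
\mu(\rho(f)\psi)=\Ad(f)\mu(\psi).
\]
If $\rho(f)\psi=\psi$, this gives $\Ad(f)\mu(\psi)=\mu(\psi)$, so $f\in\Aut(P,\mu(\psi))$.

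I do not expect a real obstacle here. The only point needing care is the conventions: whether automorphisms are taken for $G$ or modulo the centre $Z(G)$, which acts on $\V$ possibly nontrivially but trivially on $\lie{g}$. I will use the unreduced group of gauge transformations on both sides, so the inclusion holds literally. I will remark that the same argument gives the Lie algebra version $\{\xi\in H^0(\Ad(P)) : \rho(\xi)\psi=0\}\subseteq\{\xi : [\xi,\mu(\psi)]=0\}$. This follows by differentiating, or directly from $d\mu_\psi(\rho(\xi)\psi)=[\xi,\mu(\psi)]$ established above, and it is the infinitesimal statement needed for the subsequent simplicity comparison.
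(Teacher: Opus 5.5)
Your proposal is correct and follows essentially the same argument as the paper: apply the equivariance identity $\mu(\rho(f)\psi)=\Ad(f)\mu(\psi)$ at the bundle level and use $\rho(f)\psi=\psi$. One small notational point: in the paper $\Ad(P)$ denotes the adjoint Lie algebra bundle, so you should use a different symbol for the group bundle $P\times_G G$ of gauge transformations.
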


\begin{proof}
Let $f\in \Aut(P,\psi)$. Then $f$ is an automorphism of the principal $G$-bundle $P$
and $\rho(f)\psi=\psi$. By equivariance of the moment map,
\[
\mu(\rho(f)\psi)=\Ad(f)\mu(\psi).
\]
Since $\rho(f)\psi=\psi$, it follows that
\[
\Ad(f)\mu(\psi)=\mu(\psi),
\]
so $f\in \Aut(P,\mu(\psi))$.
\end{proof}

\begin{defin}
A $(\rho,K^{1/2})$-pair $(P,\psi)$ is called \emph{simple} if
\[
\Aut(P,\psi)=Z(G)\cap \ker\rho,
\]
and a $G$-Higgs bundle $(P,\Phi)$ is called \emph{simple} if
\[
\Aut(P,\Phi)=Z(G).
\]
\end{defin}

We shall use the fact that if the associated Higgs
bundle is simple, then every automorphism of the pair is central, and one can often force it to
act trivially on the representation by requiring the spinor to have non-zero components in the
relevant irreducible summands.

To formulate this precisely, we use the structure theory of symplectic $G$-modules recalled
in Remark~\ref{rmk:knop-structure}. Following the terminology inspired by Knop's notion of a
saturated symplectic representation \cite{knop1}, we make the following definition.

\begin{defin}\label{def:almost-saturated}
Let
\[
\V=\V_1\oplus \cdots \oplus \V_N
\]
be the decomposition of $\V$ into indecomposable symplectic $G$-modules. We say that
$\rho$ is \emph{almost-saturated} if each indecomposable summand occurs with multiplicity
one and, whenever one of the summands is of the form
\[
\W\oplus \W^*,
\]
the $G$-modules $\W$ and $\W^*$ are non-isomorphic.
\end{defin}

Under this hypothesis the irreducible constituents of $\V$ are pairwise non-isomorphic. Indeed,
an indecomposable symplectic module is either irreducible or of the form $\W\oplus \W^*$, and
the second condition in Definition~\ref{def:almost-saturated} rules out an isomorphism between
the two irreducible constituents of a type-two summand.

\begin{lem}\label{lem:simplicity-criterion}
Assume that $\rho$ is almost-saturated, and write the decomposition of $P(\V)$ into
irreducible associated subbundles as
\[
P(\V)=V_1\oplus \cdots \oplus V_\ell.
\]
Let
\[
\psi=\psi_1+\cdots+\psi_\ell,
\qquad
\psi_a\in H^0(\Sigma,V_a\otimes K^{1/2}).
\]
If $(P,\mu(\psi))$ is simple and each $\psi_a$ is non-zero, then $(P,\psi)$ is simple.
\end{lem}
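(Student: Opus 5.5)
We reduce to the center and then use Schur's lemma together with the almost-saturated hypothesis. Let $f\in\Aut(P,\psi)$. By Lemma~\ref{lem:aut-inclusion}, $f\in\Aut(P,\mu(\psi))$. Since $(P,\mu(\psi))$ is simple, this group is $Z(G)$, so $f=z$ is a constant central element. It remains to show that $z\in\ker\rho$; the reverse inclusion $Z(G)\cap\ker\rho\subseteq\Aut(P,\psi)$ is immediate, since such elements act trivially on $P(\V)$.

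Because $z$ is central, $\rho(z)$ commutes with $\rho(G)$ and preserves each isotypic component of $\V$. Under almost-saturation the irreducible constituents $\V_1,\dots,\V_\ell$ of $\V$ are pairwise non-isomorphic, as noted after Definition~\ref{def:almost-saturated}. Hence the isotypic components are exactly the $\V_a$. By Schur's lemma, $\rho(z)$ acts on each $\V_a$ by a scalar $\lambda_a\in\C^\times$.

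Since $z$ is constant and central, the induced automorphism of $P(\V)$ preserves the associated subbundles $V_a=P(\V_a)$ and acts on $V_a$ by $\lambda_a$. The condition $\rho(z)\psi=\psi$ then splits componentwise as $\lambda_a\psi_a=\psi_a$ for every $a$. Because each $\psi_a\neq0$, we get $\lambda_a=1$ for all $a$. Thus $\rho(z)=\Id_{\V}$, i.e.\ $z\in\ker\rho$, which proves $\Aut(P,\psi)=Z(G)\cap\ker\rho$.

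The main point needing care is the middle step, namely that the given decomposition $P(\V)=V_1\oplus\cdots\oplus V_\ell$ comes from a decomposition of $\V$ into pairwise non-isomorphic irreducibles. Without this, $\rho(z)$ could act by a non-scalar matrix on a multiplicity space, and a nonzero component would not force the eigenvalue to be $1$. This is exactly where almost-saturation enters. We also use that central elements act on associated bundles fibrewise through $\rho$ independently of the trivialization, so no twisting by transition functions occurs.
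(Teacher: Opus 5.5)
Your proof is correct and follows the paper's argument: you reduce to a constant central element via Lemma~\ref{lem:aut-inclusion} and simplicity of $(P,\mu(\psi))$, use Schur's lemma to get scalars $\lambda_a$ on the summands, and use $\psi_a\neq0$ to force $\lambda_a=1$. One correction to your closing remark: $\rho(z)$ preserves each $G$-submodule $\V_a$ and commutes with $\rho(G)$, so Schur's lemma already makes $\rho(z)|_{\V_a}$ a scalar, and in fact a central element acts on a whole isotypic component by a single scalar; the pairwise non-isomorphism is therefore not what makes this step work, although the paper also invokes it at that point.
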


\begin{proof}
By Lemma~\ref{lem:aut-inclusion}, any automorphism of $(P,\psi)$ is an automorphism of the
simple Higgs bundle $(P,\mu(\psi))$, hence is given by a central element of $G$.

Since the irreducible summands of $\V$ are pairwise non-isomorphic, Schur's lemma implies
that a central element acts on each irreducible summand by a scalar. Therefore any
automorphism of $(P,\psi)$ acts on each
\[
V_a\otimes K^{1/2}
\]
by multiplication by some scalar $\lambda_a\in \C^\times$. Since the automorphism fixes
$\psi$, and since every $\psi_a$ is non-zero, one must have $\lambda_a=1$ for all $a$.
Thus the automorphism acts trivially on $P(\V)$, and therefore lies in $Z(G)\cap \ker\rho$.
The reverse inclusion is automatic, so $(P,\psi)$ is simple.
\end{proof}

\subsection*{The Petri-type map}

Motivated by the role of the Petri map in Brill--Noether theory, we call the induced map on
global sections
\[
d\mu_\psi :  H^0(\Sigma,P(\V)\otimes K^{1/2})
\to
H^0(\Sigma,\Ad(P)\otimes K)
\]
the \emph{Petri-type map} associated with $(P,\psi)$.

\begin{rmk}
For the standard representation of $\Sp_{2n}(\C)$, using
\[
\lie{sp}_{2n}(\C)\cong \Sym^2(\C^{2n}),
\]
one has
\[
d\mu_\psi(\dot\psi)=\psi\otimes \dot\psi+\dot\psi\otimes \psi.
\]
Thus the Petri-type map is the symmetric analogue of the classical Petri map.
\end{rmk}

\begin{lem}\label{lem:j-injective}
If the Petri-type map
\[
d\mu_\psi :  H^0(\Sigma,P(\V)\otimes K^{1/2})
\to
H^0(\Sigma,\Ad(P)\otimes K)
\]
is injective, then:
\begin{enumerate}[(a)]
\item the induced map
\[
j : 
\K^1(\Sigma,C^\bullet_{\mathrm{pair}}(P,\psi))
\longrightarrow
\K^1(\Sigma,C^\bullet(P,\mu(\psi)))
\]
is injective;
\item and
\[
\K^2(\Sigma,C^\bullet_{\mathrm{pair}}(P,\psi))=0.
\]
\end{enumerate}
\end{lem}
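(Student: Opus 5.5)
We compare the hypercohomology long exact sequences of the two complexes, using the morphism of complexes $C^\bullet_{\mathrm{pair}}(P,\psi)\to C^\bullet(P,\mu(\psi))$ described above. Its degree-$0$ component is the identity on $\Ad(P)$ and its degree-$1$ component is $d\mu_\psi$. For a two-term complex $A\xrightarrow{d}B$, the stupid filtration gives the exact sequence
\[
0\to \K^0\to H^0(A)\xrightarrow{d} H^0(B)\to \K^1\to H^1(A)\xrightarrow{d} H^1(B)\to \K^2\to 0 .
\]
The morphism of complexes induces a morphism between the two such sequences. On the $H^\bullet(A)$ terms the vertical maps are the identity of $H^\bullet(\Ad(P))$. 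On the $H^\bullet(B)$ terms they are $d\mu_\psi$ on $H^0$ and $H^1$.

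\emph{Part (a).} Let $x\in \K^1(C^\bullet_{\mathrm{pair}})$ with $j(x)=0$. Its image in $H^1(\Ad(P))$ equals the image of $j(x)$, which is zero, because the vertical map there is the identity. So $x$ comes from some $b\in H^0(P(\V)\otimes K^{1/2})$. Then $d\mu_\psi(b)$ maps to $0$ in $\K^1(C^\bullet(P,\Phi))$, so $d\mu_\psi(b)=[\xi,\Phi]$ for some $\xi\in H^0(\Ad(P))$. By equivariance, $[\xi,\Phi]=d\mu_\psi(\rho(\xi)\psi)$. Injectivity of the Petri-type map then gives $b=\rho(\xi)\psi$, so $b$ lies in the image of $H^0(\Ad(P))$, and therefore $x=0$. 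This is a four-lemma style chase that uses only injectivity on $H^0$.

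\emph{Part (b).} By Serre duality for hypercohomology, $\K^2(C^\bullet_{\mathrm{pair}})$ is dual to $\K^0$ of the dual complex tensored with $K$:
\[
\bigl(P(\V)\otimes K^{1/2}\bigr)^*\otimes K\longrightarrow \Ad(P)^*\otimes K .
\]
Using $\omega$ and $B$, this is $P(\V)\otimes K^{1/2}\to \Ad(P)\otimes K$. We will check on fibres that the dual of $\xi\mapsto\rho(\xi)\psi$ is, up to sign, $u\mapsto d\mu_\psi(u)$. This is exactly formula \eqref{eq:dmu-formula}: $B(d\mu_\psi(u),\xi)=\omega(\rho(\xi)\psi,u)$. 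Hence
\[
\K^2(C^\bullet_{\mathrm{pair}})^*\cong \ker\bigl(d\mu_\psi : H^0(P(\V)\otimes K^{1/2})\to H^0(\Ad(P)\otimes K)\bigr),
\]
which vanishes by hypothesis.

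The main obstacle is the duality step in (b). One has to make sure that the Serre dual complex is correctly identified, with the right shift and with signs. It is also important that the identification $P(\V)^*\cong P(\V)$ via $\omega$, together with the twist $K^{-1/2}\otimes K=K^{1/2}$, turns the dual differential into precisely $d\mu_\psi$, and not into some other map.
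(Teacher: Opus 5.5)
Your proposal is correct and follows essentially the same route as the paper. Part (a) is the same diagram chase between the two hypercohomology long exact sequences, using that the vertical map on $H^1(\Sigma,\Ad(P))$ is the identity. Part (b) uses Serre duality to identify $\K^2(\Sigma,C^\bullet_{\mathrm{pair}}(P,\psi))^*$ with $\ker\bigl(d\mu_\psi\bigr)$ on global sections, where the dual differential is recognized as $d\mu_\psi$ via \eqref{eq:dmu-formula}; the paper likewise treats the shift and sign as harmless.
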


\begin{proof}
For item $(a)$, the morphism of complexes
\[
C^\bullet_{\mathrm{pair}}(P,\psi)\to C^\bullet(P,\mu(\psi))
\]
induces a commutative diagram with exact rows
\[
\begin{tikzcd}[column sep=small]
{H^0(\Sigma, \Ad(P))} \arrow[d, "\Id"'] \arrow[r] & {H^0(\Sigma, P(\V)\otimes K^{1/2})} \arrow[d, "d\mu_\psi"'] \arrow[r, "\delta_{\mathrm{pair}}"] & {\K^1(\Sigma,C^\bullet_{\mathrm{pair}}(P,\psi))} \arrow[d, "j"'] \arrow[r] & {H^1(\Sigma, \Ad(P))} \arrow[d, "\Id"'] \\
{H^0(\Sigma, \Ad(P))} \arrow[r]                   & {H^0(\Sigma, \Ad(P)\otimes K)} \arrow[r, "\delta_{\mathrm{Higgs}}"]                             & {\K^1(\Sigma,C^\bullet(P,\mu(\psi)))} \arrow[r]            & {H^1(\Sigma, \Ad(P)).}                  
\end{tikzcd}
\]
Let $\eta\in \K^1(\Sigma,C^\bullet_{\mathrm{pair}}(P,\psi))$ satisfy $j(\eta)=0$. Its image in
$H^1(\Sigma,\Ad(P))$ is zero, because the right vertical map is the identity. Hence
$\eta=\delta_{\mathrm{pair}}(s)$ for some
\[
s\in H^0(\Sigma,P(\V)\otimes K^{1/2}).
\]
Since $j(\eta)=0$, exactness of the bottom row gives an element
$\xi\in H^0(\Sigma,\Ad(P))$ such that
\[
d\mu_\psi(s)=d\mu_\psi(\rho(\xi)\psi).
\]
By injectivity of the Petri-type map, $s=\rho(\xi)\psi$, and therefore
$\eta=\delta_{\mathrm{pair}}(s)=0$. Thus $j$ is injective.

For item $(b)$, use the symplectic form on $\V$ and the invariant bilinear form $B$ on
$\lie{g}$. Up to the harmless sign coming from dualizing a two-term complex, the dual complex
of
\[
C^\bullet_{\mathrm{pair}}(P,\psi) : 
\Ad(P)\to P(\V)\otimes K^{1/2}
\]
tensored by $K$ is identified with
\[
P(\V)\otimes K^{1/2}
\xrightarrow{\ d\mu_\psi\ }
\Ad(P)\otimes K.
\]
Consequently
\[
\K^0\bigl(\Sigma,(C^\bullet_{\mathrm{pair}}(P,\psi))^\vee\otimes K\bigr)
=
\ker\!\left(
d\mu_\psi :  H^0(\Sigma,P(\V)\otimes K^{1/2})
\to H^0(\Sigma,\Ad(P)\otimes K)
\right).
\]
The kernel vanishes by injectivity of the Petri-type map. Serre duality for hypercohomology
therefore gives
\[
\K^2(\Sigma,C^\bullet_{\mathrm{pair}}(P,\psi))
\cong
\K^0\bigl(\Sigma,(C^\bullet_{\mathrm{pair}}(P,\psi))^\vee\otimes K\bigr)^*
=0.
\]
\end{proof}

\subsection*{A local Lagrangian criterion}

We first record the expected dimension of the moduli space of pairs.

\begin{lem}\label{lem:expected-dimension-pairs}
For every $(\rho,K^{1/2})$-pair $(P,\psi)$ of topological type $d$, one has
\[
\chi\bigl(C^\bullet_{\mathrm{pair}}(P,\psi)\bigr)=\dim G\,(1-g).
\]
\end{lem}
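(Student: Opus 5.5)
The plan is to compute the Euler characteristic of the two-term complex additively and then evaluate each term with Riemann--Roch. For the complex
\[
C^\bullet_{\mathrm{pair}}(P,\psi):\ \Ad(P)\xrightarrow{\ \rho(\cdot)(\psi)\ }P(\V)\otimes K^{1/2},
\]
the long exact hypercohomology sequence (the top row of the diagram in the proof of Lemma~\ref{lem:j-injective}, extended to both ends) gives
\[
\chi\bigl(C^\bullet_{\mathrm{pair}}(P,\psi)\bigr)=\chi(\Sigma,\Ad(P))-\chi(\Sigma,P(\V)\otimes K^{1/2}).
\]
In particular the answer is independent of $\psi$, so no property of the spinor is needed.

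For the first term, Riemann--Roch gives
\[
\chi(\Sigma,\Ad(P))=\deg\Ad(P)+\dim G\,(1-g).
\]
The degree vanishes because $\Ad(P)$ is self-dual via the invariant form $B$, or alternatively because $G$ is semisimple, so $\Ad$ lands in $\SL(\lie g)$ and $\det\Ad(P)$ is trivial.

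For the second term, I will show that $\chi(\Sigma,P(\V)\otimes K^{1/2})=0$. Set $W=P(\V)\otimes K^{1/2}$. Since $\omega$ is $G$-invariant, it induces $P(\V)\cong P(\V)^*$, and therefore
\[
W^*\otimes K\cong P(\V)\otimes K^{-1/2}\otimes K\cong W.
\]
Serre duality then gives $h^1(W)=h^0(W^*\otimes K)=h^0(W)$, so $\chi(W)=0$. As a cross-check, Riemann--Roch gives $\chi(W)=\deg W+\dim\V\,(1-g)$, and $\deg W=\deg P(\V)+\tfrac{\dim\V}{2}(2g-2)=0+\dim\V\,(g-1)$, using $\deg P(\V)=0$ by self-duality. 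The two computations agree.

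Combining the two terms gives $\chi=\dim G\,(1-g)$. There is no real obstacle here. The only point needing care is the sign convention for the Euler characteristic of the complex: it must be taken as $\sum(-1)^i\dim\K^i$, so that the degree-$0$ term enters positively. With that convention the result matches the expected dimension $\dim G\,(g-1)$ of $\cP^{d}$ at unobstructed points, where $\K^0$ and $\K^2$ vanish.
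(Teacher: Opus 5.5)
Your proposal is correct and follows the same route as the paper: additivity $\chi(C^\bullet_{\mathrm{pair}})=\chi(\Ad(P))-\chi(P(\V)\otimes K^{1/2})$, then Riemann--Roch with vanishing degrees. The only difference is that you obtain $\chi(P(\V)\otimes K^{1/2})=0$ from Serre duality and the self-duality given by $\omega$. The paper instead plugs $\deg P(\V)=0$, which holds because characters of a semisimple $G$ are trivial, directly into Riemann--Roch, exactly as in your cross-check.
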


\begin{proof}
Since $G$ is semisimple, every character of $G$ is trivial. Hence, for every
finite-dimensional representation of $G$, the determinant character is trivial and the
associated vector bundle has degree zero. In particular,
\[
\deg P(\V)=0.
\]
By Riemann--Roch,
\[
\chi(\Sigma,\Ad(P))=\dim G\,(1-g)
\qquad\text{and}\qquad
\chi\bigl(\Sigma,P(\V)\otimes K^{1/2}\bigr)=0.
\]
For a two-term complex $A^\bullet = (A^0\to A^1)$, the Euler characteristic of the hypercohomology is
\[
\chi(A^\bullet)=\chi(A^0)-\chi(A^1).
\]
Applying this to
\[
C^\bullet_{\mathrm{pair}}(P,\psi):\Ad(P)\to P(\V)\otimes K^{1/2}
\]
gives
\[
\chi\bigl(C^\bullet_{\mathrm{pair}}(P,\psi)\bigr)
=
\chi(\Sigma,\Ad(P))
-
\chi\bigl(\Sigma,P(\V)\otimes K^{1/2}\bigr)
=
\dim G\,(1-g).
\]
\end{proof}

The next proposition is the basic local statement.

\begin{prop}\label{prop:local-lagrangian}
Let $(P,\psi)\in \cP^{d,\mathrm{H-st}}_{(\rho,K^{1/2})}$, and set
\[
\Phi=\mu(\psi).
\]
Assume that:
\begin{enumerate}[(i)]
\item $(P,\psi)$ is a smooth point of $\cP^d_{(\rho,K^{1/2})}$ of expected dimension
\[
\dim G\, (g-1);
\]
\item $(P,\Phi)$ is a smooth point of $\cM^d(G)$ of expected dimension
\[
2\dim G\, (g-1);
\]
\item the Petri-type map
\[
d\mu_\psi :  H^0(\Sigma,P(\V)\otimes K^{1/2})
\to
H^0(\Sigma,\Ad(P)\otimes K)
\]
is injective.
\end{enumerate}
Then
\[
dF_\rho : 
T_{(P,\psi)}\cP^{d}_{(\rho,K^{1/2})}
\to
T_{(P,\Phi)}\cM^d(G)
\]
is injective, and its image is a Lagrangian subspace.
\end{prop}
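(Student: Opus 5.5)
The plan is to identify both tangent spaces with hypercohomology groups and then count dimensions. At the smooth point $(P,\psi)$, hypothesis (i) identifies
\[
T_{(P,\psi)}\cP^d_{(\rho,K^{1/2})}\cong \K^1(\Sigma,C^\bullet_{\mathrm{pair}}(P,\psi)).
\]
Since $(P,\psi)$ lies in $\cP^{d,\mathrm{H-st}}_{(\rho,K^{1/2})}$, it is stable by Proposition~\ref{prop:stability-comparison}. Hypothesis (ii) likewise identifies $T_{(P,\Phi)}\cM^d(G)$ with $\K^1(\Sigma,C^\bullet(P,\Phi))$. Under these identifications, $dF_\rho$ is the map $j$ induced by the morphism of complexes $C^\bullet_{\mathrm{pair}}(P,\psi)\to C^\bullet(P,\Phi)$ described at the start of this section. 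Lemma~\ref{lem:j-injective}(a) and hypothesis (iii) then give injectivity of $dF_\rho$ immediately.

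For the Lagrangian property, I would first show that the image of $j$ is isotropic. This is exactly the Čech computation preceding Proposition~\ref{prop:isotropic}. A class in $\K^1(\Sigma,C^\bullet_{\mathrm{pair}})$ is represented by $(\{s_{ij}\},\{u_i\})$ with $u_i-u_j+\rho(s_{ij})\psi=0$, and its image under $j$ is $(\{s_{ij}\},\{d\mu_\psi(u_i)\})$. The identity \eqref{eq:dmu-formula} then shows that the symplectic pairing of two such images is the coboundary of $\{\omega(u_i,u_i')\}$, hence vanishes. I would note that this argument is purely linear-algebraic: it applies to arbitrary hypercohomology classes of the pair complex, not only to tangent vectors of the Gaiotto locus.

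It remains to compare dimensions: I must show $\dim \K^1(C^\bullet_{\mathrm{pair}})=\dim G\,(g-1)$, which is half of $2\dim G\,(g-1)$. Hypothesis (i) already asserts this through the tangent space identification. As a consistency check, Lemma~\ref{lem:expected-dimension-pairs} gives $\chi=\dim G(1-g)$, so $\dim\K^1=\dim G(g-1)+\dim\K^0+\dim\K^2$. Lemma~\ref{lem:j-injective}(b) gives $\K^2=0$. The group $\K^0$ consists of infinitesimal automorphisms of the pair, and it vanishes because its image under the identity map on $\Ad(P)$ lies in $\K^0(C^\bullet(P,\Phi))=0$, as $(P,\Phi)$ is stable. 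An isotropic subspace of half dimension in a symplectic vector space is Lagrangian, which completes the argument.

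The main obstacle is justifying that the tangent space of the coarse moduli space at $(P,\psi)$ really is $\K^1$ of the pair complex, and that $dF_\rho$ corresponds to $j$. This requires the standard deformation theory of stable pairs, taken from Schmitt's construction, together with the absence of nontrivial automorphisms beyond $Z(G)\cap\ker\rho$. Hypothesis (i) is designed to absorb this, and I would invoke it rather than reprove it.
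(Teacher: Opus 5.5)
Your proposal is correct and follows the paper's own proof: you identify $dF_\rho$ with $j$, get injectivity from Lemma~\ref{lem:j-injective}(a), get isotropy from the \v Cech computation behind Proposition~\ref{prop:isotropic}, and conclude by the half-dimension count from hypotheses (i) and (ii). Two of your additions go beyond the paper. Applying the \v Cech computation directly to the image of $j$ is a slight sharpening, since the paper cites Proposition~\ref{prop:isotropic}, which is stated for the smooth locus of the Gaiotto locus, and it need not be known that $(P,\Phi)$ is a smooth point of that locus; your $\K^0$/$\K^2$ consistency check is a harmless extra.
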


\begin{proof}
By Lemma~\ref{lem:j-injective}, the injectivity of the Petri-type map implies that the induced
map
\[
j : 
\K^1(\Sigma,C^\bullet_{\mathrm{pair}}(P,\psi))
\longrightarrow
\K^1(\Sigma,C^\bullet(P,\Phi))
\]
is injective. Since $dF_\rho$ is identified with $j$, the differential of $F_\rho$ at
$(P,\psi)$ is injective.

By Proposition~\ref{prop:isotropic}, the image of $dF_\rho$ is isotropic. Its dimension is
\[
\dim T_{(P,\psi)}\cP^d_{(\rho,K^{1/2})}
=
\dim G\, (g-1)
=
\frac12 \dim T_{(P,\Phi)}\cM^d(G),
\]
hence this isotropic subspace is necessarily Lagrangian.
\end{proof}

\begin{rmk}\label{rmk:simple-smooth-higgs}
If $(P,\mu(\psi))$ is stable and simple, then it is a smooth point of $\cM^d(G)$ of
dimension $2\dim G\, (g-1)$; see, for example, \cite{GPGMiR09}. Hence, if $(P,\psi)$ is
a smooth point of $\cP^d_{(\rho,K^{1/2})}$ of expected dimension and the Petri-type map is
injective, Proposition~\ref{prop:local-lagrangian} shows that the image of
\[
dF_\rho : 
T_{(P,\psi)}\cP^d_{(\rho,K^{1/2})}
\to
T_{(P,\mu(\psi))}\cM^d(G)
\]
is Lagrangian.
\end{rmk}

The following theorem gives a practical criterion for the Lagrangian property.

\begin{thm}\label{thm:almost-saturated-lagrangian}
Assume that $\rho$ is almost-saturated. Write
\[
P(\V)=V_1\oplus \cdots \oplus V_\ell,
\qquad
\psi=\psi_1+\cdots+\psi_\ell,
\qquad
\psi_a\in H^0(\Sigma,V_a\otimes K^{1/2}),
\]
where the $V_a$ are the associated bundles corresponding to the irreducible summands of
$\V$. Assume moreover that each $\psi_a$ is non-zero.

If the Higgs bundle $(P,\mu(\psi))$ is stable and simple, and if the Petri-type map
\[
d\mu_\psi :  H^0(\Sigma,P(\V)\otimes K^{1/2})
\to
H^0(\Sigma,\Ad(P)\otimes K)
\]
is injective, then
\[
dF_\rho : 
T_{(P,\psi)}\cP^{d}_{(\rho,K^{1/2})}
\to
T_{(P,\mu(\psi))}\cM^d(G)
\]
is injective and its image is Lagrangian.
\end{thm}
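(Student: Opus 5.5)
The plan is to reduce the statement to Proposition~\ref{prop:local-lagrangian} by checking its three hypotheses. Hypothesis (iii), injectivity of the Petri-type map, is assumed. Hypothesis (ii) follows from Remark~\ref{rmk:simple-smooth-higgs}: a stable and simple Higgs bundle $(P,\mu(\psi))$ is a smooth point of $\cM^d(G)$ of dimension $2\dim G\,(g-1)$. Proposition~\ref{prop:stability-comparison} shows that $(P,\psi)$ is stable, so the point lies in $\cP^{d,\mathrm{H-st}}_{(\rho,K^{1/2})}$ and $F_\rho$ is defined near it. The remaining task is hypothesis (i): $(P,\psi)$ must be a smooth point of $\cP^d_{(\rho,K^{1/2})}$ of dimension $\dim G\,(g-1)$.

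For (i) I will use the deformation theory of the pair complex $C^\bullet_{\mathrm{pair}}(P,\psi)$. Since $\rho$ is almost-saturated, each $\psi_a\neq0$, and $(P,\mu(\psi))$ is simple, Lemma~\ref{lem:simplicity-criterion} shows that $(P,\psi)$ is simple, i.e. $\Aut(P,\psi)=Z(G)\cap\ker\rho$, which is finite.

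Next, $\K^0(\Sigma,C^\bullet_{\mathrm{pair}}(P,\psi))$ is the Lie algebra of the automorphism group of the pair, namely $\{\xi\in H^0(\Sigma,\Ad(P)) : \rho(\xi)\psi=0\}$. It therefore vanishes, because $\Aut(P,\psi)$ is finite. Alternatively, such a $\xi$ satisfies $[\xi,\mu(\psi)]=0$ by equivariance, so it is an infinitesimal automorphism of the simple Higgs bundle and hence zero, as $\lie{z}(\lie{g})=0$.

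Lemma~\ref{lem:j-injective}(b) gives $\K^2(\Sigma,C^\bullet_{\mathrm{pair}}(P,\psi))=0$. Combined with Lemma~\ref{lem:expected-dimension-pairs}, this yields
\[
\dim \K^1(\Sigma,C^\bullet_{\mathrm{pair}}(P,\psi))=-\chi\bigl(C^\bullet_{\mathrm{pair}}(P,\psi)\bigr)=\dim G\,(g-1).
\]

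The main obstacle is turning these vanishings into smoothness of the coarse moduli space at $(P,\psi)$. Three facts are needed:
\begin{itemize}
\item deformations of the pair are unobstructed, with obstruction space $\K^2=0$, so the local deformation (Kuranishi) space is smooth of dimension $\dim\K^1$;
\item since the pair is stable with finite automorphism group, the coarse space is locally the quotient of this Kuranishi space by $\Aut(P,\psi)/(Z(G)\cap\ker\rho)$;
\item that quotient group is trivial by simplicity, so no quotient singularities arise.
\end{itemize}
I would justify these through Schmitt's GIT construction, cited for the existence of $\cP^d_{(\rho,K^{1/2})}$, together with Luna's étale slice theorem at a stable point with stabilizer acting trivially. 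Alternatively, one could cite the standard smoothness result for stable simple pairs with vanishing $\K^2$, as in \cite{GPGMiR09}. Either way, the tangent space is identified with $\K^1(\Sigma,C^\bullet_{\mathrm{pair}}(P,\psi))$.

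With (i)–(iii) established, Proposition~\ref{prop:local-lagrangian} gives that $dF_\rho$ is injective, since it is the map $j$ of Lemma~\ref{lem:j-injective}(a). Its image is isotropic by Proposition~\ref{prop:isotropic}, and has dimension $\dim G\,(g-1)$, which is half of $\dim T_{(P,\mu(\psi))}\cM^d(G)$. An isotropic subspace of half the dimension is Lagrangian, which completes the proof.
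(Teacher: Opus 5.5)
Your proposal is correct and follows essentially the same route as the paper: stability and simplicity of the pair via Proposition~\ref{prop:stability-comparison} and Lemma~\ref{lem:simplicity-criterion}, then vanishing of $\K^0$ and $\K^2$ of the pair complex, then the Euler-characteristic count giving $\dim G\,(g-1)$, then smoothness of the coarse space at a simple pair, and finally injectivity of $j$ together with isotropy and the half-dimension count. The differences are only in packaging: you route the last step through Proposition~\ref{prop:local-lagrangian} and justify smoothness via Schmitt's GIT construction and Luna's slice theorem, whereas the paper argues inline and appeals to the Biswas--Ramanan deformation theory \cite{bis}.
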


\begin{proof}
By Proposition~\ref{prop:stability-comparison}, stability of $(P,\mu(\psi))$ implies stability of
$(P,\psi)$. By Lemma~\ref{lem:simplicity-criterion}, the almost-saturated hypothesis together
with the non-vanishing of the irreducible components of $\psi$ implies that $(P,\psi)$ is
simple.

Since $(P,\mu(\psi))$ is stable and simple, it is a smooth point of $\cM^d(G)$, and
\[
T_{(P,\mu(\psi))}\cM^d(G)
\cong
\K^1(\Sigma,C^\bullet(P,\mu(\psi))).
\]
Moreover,
\[
\dim T_{(P,\mu(\psi))}\cM^d(G)=2\dim G\, (g-1);
\]
see, for example, \cite[Propositions 3.11, 3.13 and 3.18]{GPGMiR09}.

On the pair side, simplicity gives
\[
\K^0(\Sigma,C^\bullet_{\mathrm{pair}}(P,\psi))=0,
\]
and injectivity of the Petri-type map gives
\[
\K^2(\Sigma,C^\bullet_{\mathrm{pair}}(P,\psi))=0
\]
by Lemma~\ref{lem:j-injective}. By Theorem~2.3 and Remark~2.8(ii) of \cite{bis}, infinitesimal
deformations of pairs are governed by the hypercohomology of the two-term deformation
complex; their formal smoothness argument, Theorem~3.1, applies here once
$\K^2(\Sigma,C^\bullet_{\mathrm{pair}}(P,\psi))=0$. Since the stabilizer of a simple pair is the
finite central group $Z(G)\cap\ker\rho$, and this group acts trivially on the deformation
complex, the corresponding coarse moduli space is smooth at $(P,\psi)$, with
\[
T_{(P,\psi)}\cP^d_{(\rho,K^{1/2})}
\cong
\K^1(\Sigma,C^\bullet_{\mathrm{pair}}(P,\psi)).
\]

Since $\K^0(\Sigma,C^\bullet_{\mathrm{pair}}(P,\psi))=\K^2(\Sigma,C^\bullet_{\mathrm{pair}}(P,\psi))=0$,
Lemma~\ref{lem:expected-dimension-pairs} yields
\[
\dim T_{(P,\psi)}\cP^d_{(\rho,K^{1/2})}
=
\dim \K^1(\Sigma,C^\bullet_{\mathrm{pair}}(P,\psi))
=
-\chi\bigl(C^\bullet_{\mathrm{pair}}(P,\psi)\bigr)
=
\dim G\, (g-1).
\]

Finally, injectivity of the Petri-type map implies, by Lemma~\ref{lem:j-injective}, that the map
\[
j : 
\K^1(\Sigma,C^\bullet_{\mathrm{pair}}(P,\psi))
\longrightarrow
\K^1(\Sigma,C^\bullet(P,\mu(\psi)))
\]
is injective. This is precisely the differential $dF_\rho$. By Proposition~\ref{prop:isotropic},
its image is isotropic. Since its dimension is exactly
\[
\dim G\, (g-1)
=
\frac12 \dim T_{(P,\mu(\psi))}\cM^d(G),
\]
the image is Lagrangian.
\end{proof}

\begin{rmk}\label{rmk:W-plus-Wdual-not-lagrangian}
The example of $\W\oplus \W^*$ from Example~\ref{ex:W-plus-Wdual} does not fit the
local Lagrangian criterion above. Indeed, the symplectic $G$-module $\W\oplus \W^*$
carries an additional scalar $\C^\times$-action, given by
\[
t\cdot (u,\delta)=(tu,t^{-1}\delta),
\]
which preserves the symplectic form and commutes with the $G$-action. The moment map is
invariant under this action:
\[
\mu(tu,t^{-1}\delta)=\mu(u,\delta).
\]
Consequently, whenever both components of a spinor $\psi=(u,\delta)$ are non-zero, the
differential of the moment map has a non-trivial kernel, namely the direction $(u,-\delta)$.
If one of the two components vanishes, the differential still has a kernel containing the
corresponding same-side spinor deformations. Thus the injectivity hypothesis in the local
Lagrangian criterion fails identically on the locus where the spinor has a non-zero component in
$\W\oplus \W^*$.

In particular, although Proposition~\ref{prop:isotropic} still implies that the corresponding
Gaiotto locus is isotropic, the criterion proved above does not apply to show that it is
Lagrangian. This contrasts with the standard representation of $\Sp_{2n}(\C)$ considered later
in the paper.
\end{rmk}

\begin{thm}\label{thm:global-lagrangian-component}
Assume that $G$ is complex semisimple and that $\rho$ is almost-saturated. Let
\[
S\subset \cP^{d,\mathrm{H-st}}_{(\rho,K^{1/2})}
\]
be an irreducible component of the locus of pairs $(P,\psi)$ such that
\[
\psi\neq 0.
\]

Let
\[
S^{\mathrm{gen}}\subset S
\]
be a dense Zariski-open subset on which the functions
\[
(P,\psi)\mapsto h^0\bigl(\Sigma,P(\V)\otimes K^{1/2}\bigr),
\qquad
(P,\psi)\mapsto h^0\bigl(\Sigma,\Ad(P)\otimes K\bigr)
\]
are constant. Assume that the subset of $S^{\mathrm{gen}}$ consisting of those $(P,\psi)$ such
that
\begin{enumerate}[(i)]
\item if
\[
P(\V)=V_1\oplus \cdots \oplus V_\ell,
\qquad
\psi=\psi_1+\cdots+\psi_\ell
\]
is the decomposition into irreducible associated subbundles, then each $\psi_a$ is non-zero;
\item the Higgs bundle $(P,\mu(\psi))$ is simple;
\item the Petri-type map
\[
d\mu_\psi :  H^0(\Sigma,P(\V)\otimes K^{1/2})
\to
H^0(\Sigma,\Ad(P)\otimes K)
\]
is injective
\end{enumerate}
is nonempty.

Then
\[
\cG_S:=\overline{F_\rho(S)}\subset \cM^d(G)
\]
is an irreducible Lagrangian subvariety.
\end{thm}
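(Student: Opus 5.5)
The plan is to reduce the global statement to the pointwise criterion of Theorem~\ref{thm:almost-saturated-lagrangian}, applied on a dense open subset of $S$, and then to pass to closures. First, irreducibility: $S$ is irreducible and $F_\rho$ is a morphism on $\cP^{d,\mathrm{H-st}}_{(\rho,K^{1/2})}$. Hence $F_\rho(S)$ is irreducible, and so is its Zariski closure $\cG_S$.

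Next I isolate a good open set. Let $S^\circ\subset S^{\mathrm{gen}}$ be the subset where conditions (i)--(iii) hold. Each condition is open:
\begin{itemize}
\item[(i)] non-vanishing of each $\psi_a$ is open in the spinor;
\item[(ii)] simplicity of a stable Higgs bundle is open, since $\dim \Aut$ is upper semicontinuous and, among stable objects, the automorphism group is finite and contains $Z(G)$; one argues via the locally constant order of finite stabilizers on the stable locus, or works directly with the smooth locus of $\cM^d(G)$;
\item[(iii)] on $S^{\mathrm{gen}}$ the source and target of $d\mu_\psi$ have constant dimension and form vector bundles after local trivialization on an étale or smooth atlas, so injectivity is the non-vanishing of a maximal minor, an open condition.
\end{itemize}
By hypothesis $S^\circ$ is nonempty, so it is dense in the irreducible $S$. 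Moreover $F_\rho(S^\circ)$ is dense in $\cG_S$.

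At each point of $S^\circ$, Theorem~\ref{thm:almost-saturated-lagrangian} applies. Its proof shows that $(P,\psi)$ is a smooth point of $\cP^d_{(\rho,K^{1/2})}$ of dimension $\dim G\,(g-1)$. Since this is the local dimension of the moduli space there, $S$ coincides with the moduli space near such points and $\dim S=\dim G\,(g-1)$. The same theorem shows $dF_\rho$ is injective with Lagrangian image. So $F_\rho|_{S^\circ}$ is an immersion into the smooth locus of $\cM^d(G)$, hence quasi-finite, and $\dim \cG_S=\dim G\,(g-1)$, which is half the dimension of $\cM^d(G)$ at these points.

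The main obstacle is passing from "$dF_\rho$ has Lagrangian image" to "the tangent space of $\cG_S$ is Lagrangian at its generic point". An immersion need not be injective, so several branches could meet. To handle this, apply generic smoothness in characteristic zero to $F_\rho|_{S^\circ}\colon S^\circ\to \cG_S$, which is dominant between irreducible varieties of equal dimension. There is a dense open $W\subset \cG_S^{\mathrm{sm}}$, contained in the smooth locus of $\cM^d(G)$, over which the map is étale. For $x\in W$ and any preimage $p$, we get $T_x\cG_S=dF_\rho(T_pS)$, which is Lagrangian.

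Since the set where $\omega|_{T\cG_S}=0$ is closed, isotropy of $\cG_S^{\mathrm{sm}}$ also follows directly from Proposition~\ref{prop:isotropic}, as $\cG_S$ lies in the Gaiotto locus. Combined with $\dim\cG_S=\frac12\dim\cM^d(G)$, this shows that $\cG_S$ is an irreducible Lagrangian subvariety.
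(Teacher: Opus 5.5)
Your proposal is correct and follows the paper's proof essentially step for step. Both arguments use the open locus cut out by (i)--(iii), the pointwise appeal to Theorem~\ref{thm:almost-saturated-lagrangian} giving an unramified map from a smooth source of dimension $\dim G\,(g-1)$, Lagrangian tangent spaces over a dense open subset of $\cG_S$, and closedness of isotropy on $\cG_S^{\mathrm{sm}}$.

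The only variation is that you get \'etaleness over a dense open set from generic smoothness, whereas the paper uses a Zariski Main Theorem step. Yours is the cleaner route: ``finite and unramified'' alone does not give \'etale unless one also shrinks to where the target is smooth or the map is flat. One small correction: on the stable locus the stabilizer order is upper semicontinuous, not locally constant, and that is exactly what makes simplicity an open condition.
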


\begin{proof}
Let
\[
U\subset S^{\mathrm{gen}}
\]
be the locus where conditions $(i)$, $(ii)$, and $(iii)$ hold. We first note that $U$ is
Zariski open in $S^{\mathrm{gen}}$.

Condition $(i)$ is open, since each $\psi_a$ is obtained from $\psi$ by projection to the
corresponding irreducible summand, and the locus $\psi_a=0$ is closed.

Condition $(ii)$ is open on the stable Higgs locus: it is the regularly stable locus. Equivalently,
in a local universal family, the relative automorphism group modulo the central subgroup
$Z(G)$ is finite over the stable locus, and the locus where this finite group scheme has trivial
fiber is open.

For condition $(iii)$, the constancy of the dimensions
\[
h^0\bigl(\Sigma,P(\V)\otimes K^{1/2}\bigr)
\qquad\text{and}\qquad
h^0\bigl(\Sigma,\Ad(P)\otimes K\bigr)
\]
on $S^{\mathrm{gen}}$ implies, after passing to an \'etale-local universal family over
$S^{\mathrm{gen}}$, that the spaces of global sections form vector bundles over
$S^{\mathrm{gen}}$, and that the Petri-type maps fit together into a morphism of vector bundles.
Hence injectivity of the Petri-type map is an open condition on $S^{\mathrm{gen}}$.

Thus $U$ is open in $S^{\mathrm{gen}}$. By assumption it is nonempty, and since
$S^{\mathrm{gen}}$ is dense in the irreducible variety $S$, it follows that $U$ is dense in $S$.

Now fix $(P,\psi)\in U$. Since $(P,\mu(\psi))$ is stable by definition of
$\cP^{d,\mathrm{H-st}}_{(\rho,K^{1/2})}$, Proposition~\ref{prop:stability-comparison} implies that
$(P,\psi)$ is stable. By almost-saturatedness and the non-vanishing of the irreducible
components of $\psi$, Lemma~\ref{lem:simplicity-criterion} shows that $(P,\psi)$ is simple.
The injectivity of the Petri-type map gives
\[
\K^2\bigl(\Sigma,C^\bullet_{\mathrm{pair}}(P,\psi)\bigr)=0
\]
by Lemma~\ref{lem:j-injective}. By the deformation theory of pairs, as used in the proof of
Theorem~\ref{thm:almost-saturated-lagrangian}, $(P,\psi)$ is a smooth point of
$\cP^d_{(\rho,K^{1/2})}$, with
\[
T_{(P,\psi)}\cP^d_{(\rho,K^{1/2})}
\cong
\K^1\bigl(\Sigma,C^\bullet_{\mathrm{pair}}(P,\psi)\bigr).
\]
Since also $(P,\mu(\psi))$ is stable and simple, it is a smooth point of $\cM^d(G)$, with
\[
T_{(P,\mu(\psi))}\cM^d(G)
\cong
\K^1\bigl(\Sigma,C^\bullet(P,\mu(\psi))\bigr).
\]

Theorem~\ref{thm:almost-saturated-lagrangian} now applies, and shows that
\[
dF_\rho : 
T_{(P,\psi)}\cP^{d,\mathrm{H-st}}_{(\rho,K^{1/2})}
\to
T_{(P,\mu(\psi))}\cM^d(G)
\]
is injective and has Lagrangian image. In particular, $F_\rho|_U$ is unramified. Moreover,
the tangent spaces of $U$ have dimension
\[
\dim G\, (g-1),
\]
so
\[
\dim U=\dim G\, (g-1).
\]

Since $U$ is dense in the irreducible variety $S$, the closure of $F_\rho(U)$ coincides with
the closure of $F_\rho(S)$, namely $\cG_S$. Because the image of an irreducible variety under a
morphism is irreducible as a constructible set, $\cG_S$ is irreducible.

The morphism
\[
F_\rho|_U :  U\to \cG_S
\]
is of finite type and unramified, hence quasi-finite. By Zariski's Main Theorem, there exists a
dense open subset
\[
V\subset \cG_S
\]
such that
\[
(F_\rho|_U)^{-1}(V)\to V
\]
is finite. Since it is also unramified, it is finite \'etale. Therefore $V$ is smooth, and for every
point of $V$ its tangent space is identified with the image of the differential of $F_\rho$ at any
point of its preimage. These tangent spaces are Lagrangian by
Theorem~\ref{thm:almost-saturated-lagrangian}. Hence the holomorphic symplectic form of
$\cM^d(G)$ vanishes on $V$.

Finally,
\[
\dim \cG_S=\dim V=\dim G\, (g-1)=\frac12\dim \cM^d(G).
\]
Since the restriction of the holomorphic symplectic form to $\cG_S^\mathrm{sm}$ is a regular
$2$-form and vanishes on the dense open subset $V\subset \cG_S^\mathrm{sm}$, it vanishes on all
of $\cG_S^\mathrm{sm}$. Thus $\cG_S$ is Lagrangian.
\end{proof}

\begin{cor}\label{cor:gaiotto-union-lagrangian}
Assume that the hypotheses of Theorem~\ref{thm:global-lagrangian-component} hold for every
irreducible component of the locus
\[
\{(P,\psi)\in \cP^{d,\mathrm{H-st}}_{(\rho,K^{1/2})}\, \mid \, \psi\neq 0\}.
\]
Then the Gaiotto locus $\cG^d_{(\rho,K^{1/2})}$ is a finite union of irreducible Lagrangian
subvarieties of $\cM^d(G)$.

In particular, if the above locus of pairs is irreducible, then $\cG^d_{(\rho,K^{1/2})}$ is itself
an irreducible Lagrangian subvariety.
\end{cor}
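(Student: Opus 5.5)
The plan is to derive the corollary directly from Theorem~\ref{thm:global-lagrangian-component}, by comparing the Gaiotto locus with the union of the closures $\cG_S$. First, write
\[
\{(P,\psi)\in \cP^{d,\mathrm{H-st}}_{(\rho,K^{1/2})}\,\mid\,\psi\neq0\}=S_1\cup\cdots\cup S_m
\]
as a union of its irreducible components. This locus is a locally closed subset of the quasi-projective moduli space $\cP^d_{(\rho,K^{1/2})}$: stability of $(P,\mu(\psi))$ is an open condition, and $\psi\neq0$ is an open condition. A locally closed subset of a Noetherian scheme has only finitely many irreducible components, so $m<\infty$. By hypothesis, Theorem~\ref{thm:global-lagrangian-component} applies to each $S_k$. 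Hence each $\cG_{S_k}=\overline{F_\rho(S_k)}$ is an irreducible Lagrangian subvariety of $\cM^d(G)$.

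The main step is the set-theoretic identity
\[
\cG^d_{(\rho,K^{1/2})}=\bigcup_{k=1}^m \cG_{S_k}.
\]
For the inclusion $\supseteq$, any point of $F_\rho(S_k)$ is a stable Higgs bundle of the form $(P,\mu(\psi))$ with $\psi\neq0$. It therefore lies in the locus whose closure defines the Gaiotto locus, and taking closures gives the inclusion. For $\subseteq$, take a stable Higgs bundle $(P,\Phi)$ with $\Phi=\mu(\psi)$ and $\psi\neq0$. By Proposition~\ref{prop:stability-comparison}, the pair $(P,\psi)$ is stable, so it defines a point of $\cP^{d}_{(\rho,K^{1/2})}$. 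That point lies in $\cP^{d,\mathrm{H-st}}_{(\rho,K^{1/2})}$ and has $\psi\neq0$, hence it lies in some $S_k$, and $(P,\Phi)=F_\rho(P,\psi)$. So the defining locus of the Gaiotto locus equals $\bigcup_k F_\rho(S_k)$. The closure of a finite union is the union of the closures, which gives the identity.

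The only point needing care is that the point of the coarse moduli space is well defined. The $S$-equivalence class of a stable pair is its isomorphism class, and $F_\rho$ is independent of the representative by $G$-equivariance of $\mu$, as in Remark (a) of Section~\ref{section1}. I expect no genuine obstacle here. One caveat: after discarding redundant $\cG_{S_k}$ (those contained in another), the remaining ones are the irreducible components. Each is Lagrangian by Theorem~\ref{thm:global-lagrangian-component}, so all components have dimension $\tfrac12\dim\cM^d(G)$.

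For the last assertion, if the locus of pairs is irreducible, then $m=1$. In that case $\cG^d_{(\rho,K^{1/2})}=\cG_{S_1}$, which is irreducible and Lagrangian by Theorem~\ref{thm:global-lagrangian-component}.
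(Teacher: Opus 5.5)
Your proposal is correct and takes essentially the same approach as the paper: write the Gaiotto locus as the closure of $F_\rho$ applied to the locus of Higgs-stable pairs with $\psi\neq0$, decompose that locus into its finitely many irreducible components, and apply Theorem~\ref{thm:global-lagrangian-component} to each. The paper treats two points as immediate that you spell out, namely the identification of the defining locus with $\bigcup_k F_\rho(S_k)$ via Proposition~\ref{prop:stability-comparison}, and the finiteness of the decomposition by Noetherianity.
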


\begin{proof}
By definition,
\[
\cG^d_{(\rho,K^{1/2})}
=
\overline{
F_\rho\left(
\{(P,\psi)\in \cP^{d,\mathrm{H-st}}_{(\rho,K^{1/2})}\, \mid \, \psi\neq 0\}
\right)
}.
\]
Taking irreducible components upstairs and applying
Theorem~\ref{thm:global-lagrangian-component} to each of them gives the result.
\end{proof}

%% file: sections/sec6.tex
From now on we specialize to the standard representation of $\Sp_{2n}(\C)$ on $\C^{2n}$, and in Sections~\ref{sec:6}--\ref{sec:gaiotto-component} we assume $n\geq2$. The rank-two case $n=1$ is Hitchin's original case for the standard representation of $\SL_2(\C)\simeq\Sp_2(\C)$.
Since $\Sp_{2n}(\C)$ is simply connected, there is only one topological type, and we shall omit
$d$ from the notation. We set
\begin{equation}\label{eq:notation-spinor-section}
\cP:=\cP_{(\rho,K^{1/2})},
\qquad
\cG:=\cG_{(\rho,K^{1/2})}.
\end{equation}
We shall refer to $\cP$ as the \emph{spinor moduli space}. A point of $\cP$ is represented by a
pair $(V,\psi)$, where $(V,\omega)$ is a symplectic vector bundle of rank $2n$ on $\Sigma$ and
$\psi\in H^0(\Sigma,V\otimes K^{1/2})$. By Corollary~\ref{cor:projective-standard}, the spinor moduli space $\cP$ is projective.

Recall from Example~\ref{ex:standard-sp} that for the standard representation the moment map is given by
\[
\mu(v)=v\otimes v\in \Sym^2(\C^{2n})\cong \mathfrak{sp}_{2n}(\C).
\]
Thus the Higgs field associated with a pair $(V,\psi)$ is
\[
\Phi_\psi:=\psi\otimes\psi.
\]

Our first goal is to make explicit the stability condition in this case.

\begin{prop}\label{prop:spinor-stability}
Let $(V,\psi)$ be a pair, where $(V,\omega)$ is a symplectic vector bundle on $\Sigma$ and
$\psi\in H^0(\Sigma,V\otimes K^{1/2})$. Then the following hold.
\begin{enumerate}[(i)]
\item The pair $(V,\psi)$ is semistable, respectively stable, respectively polystable, if and only if
the associated $\Sp_{2n}(\C)$-Higgs bundle
\[
(V,\psi\otimes\psi)
\]
is semistable, respectively stable, respectively polystable.

\item The pair $(V,\psi)$ is semistable, respectively stable, if and only if for every non-zero
isotropic subbundle $U\subset V$ such that
\[
\psi\in H^0(\Sigma,U^\perp\otimes K^{1/2}),
\]
one has
\[
\deg U\leq 0,
\qquad\text{respectively}\qquad
\deg U<0.
\]

\item The pair $(V,\psi)$ is polystable if and only if it is semistable and satisfies the following
condition: whenever $U\subset V$ is either
\begin{enumerate}[(a)]
\item a non-zero isotropic subbundle of degree zero with
\[
\psi\in H^0(\Sigma,U^\perp\otimes K^{1/2}),
\]
or
\item a proper coisotropic subbundle of degree zero with
\[
\psi\in H^0(\Sigma,U\otimes K^{1/2}),
\]
\end{enumerate}
there exists a complementary coisotropic, respectively isotropic, subbundle $U'\subset V$ such that
\[
V=U\oplus U'
\]
and
\[
\psi\in H^0(\Sigma,U'\otimes K^{1/2}),
\qquad\text{respectively}\qquad
\psi\in H^0(\Sigma,(U')^\perp\otimes K^{1/2}).
\]
\end{enumerate}
\end{prop}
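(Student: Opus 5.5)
The plan is to translate the general parabolic-reduction stability condition for twisted pairs into the language of filtrations of $V$ by isotropic subbundles, and then to compare it with the known description of stability for $\Sp_{2n}(\C)$-Higgs bundles. For semistability and stability, part (i) is Proposition~\ref{prop:standard-equivalence}. It remains to treat polystability in part (i), and I will deduce that from parts (ii) and (iii).

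\textbf{Part (ii).} A reduction $\sigma$ of $V$ to a parabolic $Q\subset\Sp_{2n}(\C)$ is the same as an isotropic flag $0\subset U_1\subset\cdots\subset U_k\subset V$. An anti-dominant character $\chi$ corresponds to weights $\lambda_1<\cdots<\lambda_k<0$, extended symmetrically to the filtration $U_i\subset U_i^\perp$. For this $\chi$, the space $\V^-_\chi$ consists of the vectors on which $e^{ts_\chi}$ acts with non-positive weight. Its associated subbundle is $U_j^\perp$, where $U_j$ is the smallest flag member with negative weight; in the standard convention this is just $U_k^\perp$. The degree is
\[
\deg(P)(\sigma,\chi)=\sum_i c_i\,(-\deg U_i), \qquad c_i>0,
\]
after a sign normalization that I will fix to match the Higgs case. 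So the condition in the definition becomes a positive combination of the quantities $-\deg U_i$ being $\ge 0$, whenever $\psi$ lies in the appropriate $U^\perp$.

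To pass to the single-subbundle condition of (ii):
\begin{enumerate}[(a)]
\item For the forward direction, use one-step flags: $k=1$ with the basic character.
\item For the converse, note that $\psi\in U_j^\perp$ implies $\psi\in U_i^\perp$ for every $i\le j$, since $U_i^\perp\supset U_j^\perp$. The weights may be chosen so that only $U_i$ with $i\le j$ appear with nonzero coefficient. Each term is then controlled by the one-subbundle condition.
\end{enumerate}
The main obstacle is this bookkeeping of which weight spaces are bounded. I will first verify the two-step flag by hand, using $\V=U_1\oplus U_2/U_1\oplus\cdots$ with explicit weights, and then induct.

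\textbf{Part (iii).} I will run the same dictionary on the Levi condition. A Levi reduction of $U\subset U^\perp\subset V$ is a splitting
\[
V=U\oplus W\oplus U^*,
\]
where $W\cong U^\perp/U$ is symplectic and $U^*$ is an isotropic complement. The condition $\psi\in V^0_{\sigma_L,\chi}$ says that $\psi$ lies in the weight-zero part, namely $W\otimes K^{1/2}$ (or the analogous space for general flags).
\begin{enumerate}[(a)]
\item If the character is supported on the isotropic $U$ with $\psi\in U^\perp$, take $U'=W\oplus U^*$. This is coisotropic and complementary to $U$, and the condition becomes $\psi\in U'$, giving case (a).
\item In the dual formulation with $U$ coisotropic, take the complementary isotropic $U'$. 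Then $\psi\in(U')^\perp$ means exactly that $\psi$ lies in the weight-zero part, giving case (b).
\end{enumerate}
Strictly anti-dominant characters on multi-step flags reduce to these one-step cases: the Levi splitting must be compatible with each member of the flag, and by induction I take successive complements.

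Finally, the polystability statement in (i) follows by comparing with the analogous description of polystable $\Sp_{2n}$-Higgs bundles, as in \cite{GPGMiR09}. The argument is the same boundedness argument used in Proposition~\ref{prop:standard-equivalence}, now applied to weight-zero spaces. For the standard representation, $v\otimes v$ is fixed by $e^{ts}$ if and only if $e^{ts}v=\pm v$. By continuity in $t$ the sign is $+$, so $v$ is itself fixed, and the weight-zero conditions for $\psi$ and for $\psi\otimes\psi$ coincide.
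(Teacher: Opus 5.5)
Your proposal is correct, but it argues in the opposite direction from the paper.

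\textbf{The paper's route.} It first gets (i) for semistability and stability from Proposition~\ref{prop:standard-equivalence}. It then proves a short invariance dictionary: an isotropic $U$ is $\psi\otimes\psi$-invariant if and only if $\psi\in U^\perp$, and a coisotropic $U$ is invariant if and only if $\psi\in U$. Finally it obtains (ii) and (iii) by substituting this dictionary into the known single-subbundle stability and polystability criteria for $\Sp_{2n}(\C)$-Higgs bundles in \cite{GPGMiR09}. All the flag bookkeeping is outsourced to that reference.

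\textbf{Your route.} You unwind the Schmitt-type definition for the standard representation directly: $V^-_{\sigma,\chi}=U_j^\perp$, $V^0_{\sigma_L,\chi}$ is the symplectic middle piece of the Levi splitting, and $\deg(P)(\sigma,\chi)$ is a nonnegative combination of the $-\deg U_i$. You therefore never need the invariance dictionary or the Higgs criteria, at the cost of redoing the reduction from flags to single subbundles yourself. Your weight-zero comparison ($v\otimes v$ fixed by $e^{ts}$ forces $v$ fixed) also gives a genuine proof of the polystable case of (i), which the paper only asserts.

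\textbf{Points to tighten when you write it up.}
\begin{enumerate}[(1)]
\item $j$ should be the \emph{largest} index with $\lambda_j<0$, not the smallest.
\item The coefficients $c_i\propto\lambda_{i+1}-\lambda_i$ (with $\lambda_{k+1}:=0$) vanish automatically for $i>j$; they are not chosen.
\item You only state one direction of the Levi dictionary. Conversely, a coisotropic complement $U'$ gives back the Levi splitting $V=U\oplus(U^\perp\cap U')\oplus U'^{\perp}$.
\item In the multi-step polystability step, say explicitly that $\deg(P)(\sigma,\chi)=0$, strict antidominance ($c_i>0$) and semistability together force $\deg U_i=0$ for every $i$. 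Only then does (iii)(a) apply to each $U_i$. The coisotropic complement of $U_k$ puts $\psi$ in the middle piece. The complements of the smaller $U_i$, intersected with $U_k$, split the filtration of $U_k$. The dual splitting of $U_k^*$ is then automatic.
\item Case (b) is simply case (a) applied to $U^\perp$, since $\deg U=\deg U^\perp$.
\end{enumerate}
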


\begin{proof}
The equivalence of semistability and stability in (i) is precisely Proposition~\ref{prop:standard-equivalence}.

We next record the relevant invariance conditions for the Higgs field $\Phi_\psi=\psi\otimes\psi$.

If $U\subset V$ is isotropic, then
\[
U\text{ is }\Phi_\psi\text{-invariant}
\iff
\psi\in H^0(\Sigma,U^\perp\otimes K^{1/2}).
\]
Indeed, if $\psi\in H^0(\Sigma,U^\perp\otimes K^{1/2})$, then for every local section $u$ of $U$
one has
\[
\Phi_\psi(u)=\omega(\psi,u)\psi=0,
\]
so $U$ is $\Phi_\psi$-invariant. Conversely, assume that $U$ is $\Phi_\psi$-invariant. Then for every
local section $u$ of $U$ one has
\[
\omega(\psi,u)\psi\in U\otimes K.
\]
At a point where $\psi\neq 0$, either $\omega(\psi,u)=0$ or else $\psi$ itself lies in $U$; since $U$
is isotropic, in both cases $\psi\in U^\perp$. At points where $\psi=0$ the same conclusion is
trivial. Thus $\psi\in H^0(\Sigma,U^\perp\otimes K^{1/2})$.

If $U\subset V$ is coisotropic, then
\[
U\text{ is }\Phi_\psi\text{-invariant}
\iff
\psi\in H^0(\Sigma,U\otimes K^{1/2}).
\]
Indeed, for a symplectic Higgs field, invariance of $U$ is equivalent to invariance of $U^\perp$.
Since $U^\perp$ is isotropic, the previous equivalence gives
\[
U^\perp\text{ is }\Phi_\psi\text{-invariant}
\iff
\psi\in H^0(\Sigma,(U^\perp)^\perp\otimes K^{1/2})
=
H^0(\Sigma,U\otimes K^{1/2}).
\]

Now (ii) is exactly the usual stability criterion for $\Sp_{2n}(\C)$-Higgs bundles, specialized to
the Higgs field $\Phi_\psi=\psi\otimes\psi$; see, for example, \cite[Theorem 4.2]{GPGMiR09}.
Likewise, (iii) is the corresponding specialization of the usual polystability criterion for
$\Sp_{2n}(\C)$-Higgs bundles. The polystable case in (i) follows immediately.
\end{proof}

\begin{rmk}
When $\psi=0$, Proposition~\ref{prop:spinor-stability} reduces to the usual notions of semistability,
stability, and polystability for symplectic vector bundles.
\end{rmk}

We write
\begin{equation}\label{eq:iota-spinor}
\iota:\cP\to \cM(\Sp_{2n}(\C)),
\qquad
(V,\psi)\mapsto (V,\psi\otimes\psi).
\end{equation}

We now record the basic geometric properties of $\cP$ needed later on. The morphism \eqref{eq:iota-spinor} identifies the zero-spinor locus with the moduli space of
symplectic bundles. More precisely, let $\cU(\Sp_{2n}(\C))$ denote the moduli space of polystable
symplectic vector bundles on $\Sigma$, viewed as the locus of Higgs bundles with zero Higgs field
inside $\cM(\Sp_{2n}(\C))$.

\begin{lem}\label{lem:iota-closed-points}
The morphism $\iota$ is injective on closed points.
\end{lem}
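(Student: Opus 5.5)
We have to show that if two semistable pairs $(V_1,\psi_1)$ and $(V_2,\psi_2)$ give the same point of $\cM(\Sp_{2n}(\C))$, then they define the same point of $\cP$. Points of both moduli spaces are $S$-equivalence classes, so the plan is to reduce to polystable representatives on both sides, using Proposition~\ref{prop:spinor-stability}(i). Each closed point of $\cP$ is represented by a unique polystable pair $(V,\psi)$, and $(V,\psi\otimes\psi)$ is then a polystable Higgs bundle. The Higgs point is represented by a unique polystable Higgs bundle up to isomorphism, so equality of images gives an isomorphism of symplectic bundles
\[
f:V_1\to V_2,\qquad f(\psi_1)\otimes f(\psi_1)=\psi_2\otimes\psi_2 .
\]
I will replace $\psi_1$ by $f(\psi_1)$ and assume $V_1=V_2=V$, so that $\psi_1\otimes\psi_1=\psi_2\otimes\psi_2$ in $H^0(\Sigma,\Sym^2V\otimes K)$.

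Next I recover the spinor from its square. At a generic point where $\psi_1\neq0$, the equality $v\otimes v=w\otimes w$ in $\Sym^2$ forces $w=\pm v$. The sign is then a continuous function on the connected dense open set where $\psi_1\neq 0$, so it is constant, and $\psi_2=\pm\psi_1$ globally. If $\psi_1=0$ then $\psi_2=0$ as well.

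It remains to identify $(V,\psi)$ with $(V,-\psi)$. The automorphism $-\Id_V$ is symplectic, since it lies in the centre $\{\pm1\}$ of $\Sp_{2n}(\C)$, and it carries $\psi$ to $-\psi$. Hence the two pairs are isomorphic, and the two points of $\cP$ coincide.

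The main obstacle is justifying the reduction to polystable representatives on both sides compatibly. This needs the fact that the image under $\iota$ of a semistable pair's $S$-equivalence class is the $S$-equivalence class of the associated Higgs bundle. Equivalently, the associated graded of a Jordan–Hölder filtration of the pair, built from the isotropic or coisotropic reductions in Proposition~\ref{prop:spinor-stability}(ii)–(iii), maps to the associated graded of the Higgs bundle. I would argue this from the fact that the destabilizing reductions for the pair and for $(V,\psi\otimes\psi)$ coincide (Proposition~\ref{prop:standard-equivalence}), so the Jordan–Hölder data match and $\iota$ sends polystable objects to polystable objects. Uniqueness of polystable representatives in $\cM$ then gives the isomorphism $f$.
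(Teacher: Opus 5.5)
Your proof is correct and is essentially the paper's argument: the square $\psi\otimes\psi$ determines $\psi$ up to a locally constant, hence constant, sign, and the symplectic automorphism $-\Id_V$ identifies $(V,\psi)$ with $(V,-\psi)$. The reduction to polystable representatives is something the paper leaves implicit, and it needs no Jordan--H\"older comparison: since $\iota$ is a morphism out of $\cP$, you may compute the image of a point on its polystable representative, which maps to a polystable Higgs bundle by the polystable case of Proposition~\ref{prop:spinor-stability}(i).
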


\begin{proof}
Suppose
\[
\iota(V_1,\psi_1)\cong \iota(V_2,\psi_2),
\]
that is, there exists an isomorphism of symplectic bundles
\[
f:V_1\xrightarrow{\sim}V_2
\]
such that
\[
f\circ(\psi_1\otimes\psi_1)=(\psi_2\otimes\psi_2)\circ f.
\]
Equivalently,
\[
f(\psi_1)\otimes f(\psi_1)=\psi_2\otimes\psi_2.
\]

If $\psi_2=0$, then the right-hand side vanishes, hence $f(\psi_1)\otimes f(\psi_1)=0$, and therefore
$\psi_1=0$. In this case the pairs are clearly isomorphic.

Assume now that $\psi_2\neq 0$. On the open subset where $\psi_2$ does not vanish, the pointwise
equality
\[
u\otimes u=v\otimes v
\]
with $v\neq 0$ implies $u=\pm v$. Hence on that open subset one has
\[
f(\psi_1)=\lambda\psi_2
\]
for some holomorphic function $\lambda$ satisfying $\lambda^2=1$. Therefore $\lambda$ is locally
constant with values in $\{\pm1\}$, hence constant since $\Sigma$ is connected. By continuity the
equality extends across the zeros of the spinors, and we obtain
\[
f(\psi_1)=\pm\psi_2
\]
globally. Since $(V_2,\psi_2)$ and $(V_2,-\psi_2)$ are isomorphic via the automorphism $-\Id_{V_2}$,
the pairs $(V_1,\psi_1)$ and $(V_2,\psi_2)$ define the same point of $\cP$.
\end{proof}

Let $\Theta = \Theta_{K^{1/2}} \subset \cU(\Sp_{2n}(\C))$ be the generalized theta divisor. Its support, which we denote by the same symbol, is 
\[
\Theta=\{V\in \cU(\Sp_{2n}(\C))\,\mid\, H^0(\Sigma,V\otimes K^{1/2})\neq 0\}.
\]
Also, let 
\[
\cP^\times:=\{(V,\psi)\in \cP\,\mid\, \psi\neq 0\}
\]
be the non-zero-spinor locus.

\begin{prop}\label{prop:theta-inclusion}
The image of the zero-spinor locus $\cP\setminus \cP^\times$ under $\iota$ is precisely
$\cU(\Sp_{2n}(\C))$. Moreover,
\[
\Theta\subseteq \overline{\iota(\cP^\times)}\cap \cU(\Sp_{2n}(\C)).
\]
\end{prop}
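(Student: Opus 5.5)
The first assertion is a direct consequence of Proposition~\ref{prop:spinor-stability}. With $\psi=0$, a pair $(V,0)$ is semistable (respectively polystable) exactly when $V$ is a semistable (respectively polystable) symplectic bundle. By definition $\iota(V,0)=(V,0)$, which is the point of $\cU(\Sp_{2n}(\C))$ embedded in $\cM(\Sp_{2n}(\C))$ as a zero Higgs field. We also need $S$-equivalence classes to match. Polystable representatives correspond on both sides by part (iii) of Proposition~\ref{prop:spinor-stability} with $\psi=0$, so the image of $\cP\setminus\cP^\times$ is all of $\cU(\Sp_{2n}(\C))$.

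For the inclusion, fix $V\in\Theta$ and choose a polystable representative, still denoted $V$. Pick a nonzero $\psi\in H^0(\Sigma,V\otimes K^{1/2})$. The key observation is that the $\C^\times$-action $t\cdot(V,\psi)=(V,t\psi)$ preserves stability, since the conditions in Proposition~\ref{prop:spinor-stability}(ii) and (iii) only involve the subbundle in which $\psi$ lies. We then check that $(V,\psi)$ is semistable. If $U\subset V$ is isotropic with $\psi\in U^\perp\otimes K^{1/2}$, semistability of $V$ gives $\deg U\le 0$, so the pair is semistable by (ii). Hence $(V,\psi)\in\cP^\times$.

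The main step is a degeneration. Consider the morphism $\C\to\cP$, $t\mapsto (V,t\psi)$. It comes from the family of pairs over $\C$ with constant bundle and spinor $t\psi$, and every fibre is semistable, so it is a morphism to the coarse moduli space. Composing with $\iota$ gives the curve $t\mapsto (V,t^2\psi\otimes\psi)$ in $\cM(\Sp_{2n}(\C))$. For $t\neq0$ this curve lies in $\iota(\cP^\times)$, and at $t=0$ it passes through $(V,0)$. By continuity, $(V,0)$ lies in the Zariski closure of $\iota(\cP^\times)$, and it lies in $\cU(\Sp_{2n}(\C))$ by the first part.

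The point I expect to need care is whether $\iota$ is a morphism on all of $\cP$. Semistable pairs give semistable Higgs bundles by Proposition~\ref{prop:spinor-stability}(i), so the family $(V,t^2\psi\otimes\psi)$ is a family of semistable Higgs bundles. By the universal property of the coarse space $\cM$, it therefore induces a morphism $\C\to\cM$. This avoids any subtlety about $\iota$ globally: the curve itself exhibits $(V,0)$ as a limit of points of $\iota(\cP^\times)$.

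A second subtlety is that a strictly semistable $V$ is represented in $\cU(\Sp_{2n}(\C))$ by its associated graded, while $\Theta$ is defined through $H^0(V\otimes K^{1/2})$. Since $\Theta$ is a divisor defined by the determinant-of-cohomology section, membership is well defined on $S$-equivalence classes. It therefore suffices to use the polystable representative, which is what we did.
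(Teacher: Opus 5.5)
Your proposal is correct and follows the paper's own argument. For the first assertion you use $\iota(V,0)=(V,0)$, and for the inclusion you use the degeneration $t\mapsto (V,t\psi)$, whose image $\iota(V,t\psi)=(V,t^2\,\psi\otimes\psi)$ tends to $(V,0)$ as $t\to 0$. The extra points you verify are details the paper leaves implicit: that $(V,t\psi)$ is semistable, that the family defines an algebraic curve in the coarse moduli space, and that membership in $\Theta$ is invariant under $S$-equivalence.
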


\begin{proof}
If $\psi=0$, then $\iota(V,0)=(V,0)$, so the zero-spinor locus maps exactly to $\cU(\Sp_{2n}(\C))$.

Now let $V\in \Theta$, and choose a non-zero section $\psi\in H^0(\Sigma,V\otimes K^{1/2})$.
Then for every $t\in \C^\times$ the pair $(V,t\psi)$ lies in $\cP^\times$, and
\[
\iota(V,t\psi)=(V,t^2\psi\otimes\psi).
\]
As $t\to 0$, this converges to $(V,0)$. Hence $(V,0)\in \overline{\iota(\cP^\times)}$, and therefore
\[
\Theta\subseteq \overline{\iota(\cP^\times)}\cap \cU(\Sp_{2n}(\C)).
\]
\end{proof}

\begin{rmk}\label{rmk:theta-intersection}
By definition of the Gaiotto locus and Proposition~\ref{prop:spinor-stability},
\[
\cG=\overline{\iota(\cP^\times_{\mathrm{st}})},
\]
where $\cP^\times_{\mathrm{st}}\subset \cP^\times$ is the locus of stable non-zero-spinor pairs.
At this stage Proposition~\ref{prop:theta-inclusion} gives the inclusion
\[
\Theta\subseteq \overline{\iota(\cP^\times)}\cap \cU(\Sp_{2n}(\C)).
\]
The sharper statement involving $\cG\cap \cU(\Sp_{2n}(\C))$ will be obtained later, once the relevant
image closure has been identified.
\end{rmk}

The next lemma will be used repeatedly in the Morse-theoretic analysis.

\begin{lem}\label{lem:Lpsi-degree}
Let $(V,\psi)$ be a semistable pair, and let $L_\psi\subset V$ be the saturation of the line subsheaf generated by the image of the morphism
\[
\psi:K^{-1/2}\to V.
\]
Then
\begin{equation}\label{eq:Lpsi-bounds}
1-g\leq \deg(L_\psi)\leq 0.
\end{equation}
Moreover:
\begin{enumerate}[(i)]
\item $\deg(L_\psi)=1-g$ if and only if $\psi$ is nowhere vanishing, in which case
\[
L_\psi\cong K^{-1/2};
\]

\item if $(V,\psi)$ is polystable and $\deg(L_\psi)=0$, then $\psi=0$.
\end{enumerate}
\end{lem}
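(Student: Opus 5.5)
The plan rests on the fact that $\psi\otimes\psi$ has image in $L_\psi$, so $L_\psi$ is an isotropic, $\Phi_\psi$-invariant subbundle; the stability criterion of Proposition~\ref{prop:spinor-stability} then bounds its degree from above. The lower bound, and both refinements, come from the factorization of $\psi$ through $L_\psi$.

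\emph{Lower bound and (i).} Assume $\psi\neq0$; if $\psi=0$ the statement concerns only the convention $L_\psi=0$, and (ii) is vacuous. The morphism $\psi:K^{-1/2}\to V$ is nonzero, so it factors as an injective sheaf map $K^{-1/2}\hookrightarrow L_\psi$. This inclusion vanishes exactly along the zero divisor $D$ of $\psi$, which gives
\[
L_\psi\cong K^{-1/2}(D),\qquad \deg L_\psi=1-g+\deg D\geq 1-g,
\]
since $\deg K^{-1/2}=1-g$. Equality holds if and only if $D=0$, that is, if and only if $\psi$ is nowhere vanishing, and in that case $L_\psi\cong K^{-1/2}$. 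This proves the lower bound and (i).

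\emph{Upper bound.} A line subbundle of a symplectic bundle is automatically isotropic, because $\omega$ is alternating. By construction $\psi\in H^0(\Sigma,L_\psi\otimes K^{1/2})$. Since $L_\psi$ is isotropic we have $L_\psi\subset L_\psi^\perp$, and hence $\psi\in H^0(\Sigma,L_\psi^\perp\otimes K^{1/2})$. Proposition~\ref{prop:spinor-stability}(ii), applied with $U=L_\psi$, then gives $\deg L_\psi\leq0$ by semistability.

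\emph{(ii).} Suppose $(V,\psi)$ is polystable, $\psi\neq0$, and $\deg L_\psi=0$. Condition (a) of Proposition~\ref{prop:spinor-stability}(iii), applied with $U=L_\psi$, produces a complementary coisotropic subbundle $U'$ with $V=L_\psi\oplus U'$ and $\psi\in H^0(\Sigma,U'\otimes K^{1/2})$. But $\psi$ is also a section of $L_\psi\otimes K^{1/2}$, and $L_\psi\cap U'=0$, so $\psi=0$. This contradicts $\psi\neq0$, and (ii) follows.

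The only step needing care is the identification of $\deg L_\psi-(1-g)$ with the number of zeros of $\psi$, counted with multiplicity. This follows from the fact that $L_\psi$ is the saturation of the image of $\psi$, so that $K^{-1/2}\to L_\psi$ is a map of line bundles whose cokernel is supported exactly on the zeros of $\psi$. I expect no genuine obstacle here; the content of the lemma is essentially the use of Proposition~\ref{prop:spinor-stability}.
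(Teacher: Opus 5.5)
Your proof is correct and follows the paper's argument essentially step for step: the upper bound comes from applying Proposition~\ref{prop:spinor-stability}(ii) to the isotropic line subbundle $L_\psi$, and (ii) comes from the complementary coisotropic subbundle supplied by the polystability criterion. The only cosmetic difference is in the lower bound and (i), which you obtain from the explicit identification $L_\psi\cong K^{-1/2}(D)$. The paper instead observes that $L_\psi\otimes K^{1/2}$ has a nonzero section, so it has nonnegative degree, with equality forcing that section to be nowhere zero; this is the same fact.
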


\begin{proof}
Since $L_\psi$ is a line subbundle, it is automatically isotropic. By construction,
\[
\psi\in H^0(\Sigma,L_\psi\otimes K^{1/2})
\subseteq
H^0(\Sigma,L_\psi^\perp\otimes K^{1/2}),
\]
so semistability and Proposition~\ref{prop:spinor-stability} imply $\deg(L_\psi)\leq 0$.

On the other hand, the existence of a non-zero section of $L_\psi\otimes K^{1/2}$ implies
\[
\deg(L_\psi\otimes K^{1/2})\geq 0,
\]
hence
\[
\deg(L_\psi)\geq -\deg(K^{1/2})=1-g.
\]
This proves \eqref{eq:Lpsi-bounds}.

If $\deg(L_\psi)=1-g$, then $\deg(L_\psi\otimes K^{1/2})=0$, so the non-zero section induced by
$\psi$ has no zeros. Therefore
\[
L_\psi\otimes K^{1/2}\cong \cO_\Sigma,
\]
equivalently $L_\psi\cong K^{-1/2}$. The converse is immediate.

Finally, assume that $(V,\psi)$ is polystable and $\deg(L_\psi)=0$. By the polystability criterion in
Proposition~\ref{prop:spinor-stability}, there exists a coisotropic subbundle $U'\subset V$ such that
\[
V=L_\psi\oplus U'
\]
and
\[
\psi\in H^0(\Sigma,U'\otimes K^{1/2}).
\]
But also
\[
\psi\in H^0(\Sigma,L_\psi\otimes K^{1/2}),
\]
and since $L_\psi\cap U'=0$, it follows that $\psi=0$.
\end{proof}

\begin{ex}\label{ex:maximal-spinor}
Let
\[
V=K^{-1/2}\oplus K^{1/2}\oplus U,
\]
where $U$ is a polystable symplectic vector bundle of rank $2n-2$, and let
\[
\psi=1\in H^0(\Sigma,\cO_\Sigma)\subset H^0(\Sigma,V\otimes K^{1/2}).
\]
Then $(V,\psi)$ is polystable. Indeed, in this case
\[
L_\psi=K^{-1/2}
\qquad\text{and}\qquad
L_\psi^\perp=K^{-1/2}\oplus U,
\]
and the criterion in Proposition~\ref{prop:spinor-stability} is immediate from the polystability
of $U$.
\end{ex}

The basic question for the rest of the paper is to determine which part of the nilpotent cone is cut out by the Gaiotto locus for the standard representation.

%% file: sections/sec7.tex
We now recall the gauge-theoretic equation naturally associated with the spinor moduli space.
This will be used in the next section to introduce the Morse function on $\cP$.

Choose a Hermitian metric $h_\Sigma$ on $\Sigma$, and let $d\vol\in\Omega^{1,1}(\Sigma)$ be the
associated K\"ahler form. We denote by
\[
h_0:=h_\Sigma^{-1/2}
\]
the induced Hermitian metric on $K^{1/2}$.

Let $(V,\omega)$ be a symplectic vector bundle on $\Sigma$. A reduction of structure group
from $\Sp_{2n}(\C)$ to its maximal compact subgroup $\Sp (n)$ is equivalent to a quaternionic
structure
\[
j:V\to V
\]
compatible with $\omega$. Such a reduction determines a Hermitian metric on $V$ by
\[
h(u,v)=\omega(u,jv).
\]
Conversely, a Hermitian metric on $V$ compatible with $\omega$ determines such a quaternionic
structure. We shall freely pass between these two descriptions.

Let $A$ be the Chern connection of the Hermitian bundle $(V,h)$, and let
\[
F_A\in \Omega^{1,1}(\Sigma,\End(V))
\]
be its curvature. We also denote by
\[
\Lambda:\Omega^2(\Sigma)\to \Omega^0(\Sigma)
\]
the contraction with the K\"ahler form $d\vol$.

Now let
\[
\psi\in H^0(\Sigma,V\otimes K^{1/2})
\]
be a spinor. We shall use the pointwise description of the compact moment map in the standard
representation.

Indeed, let $W$ be a complex symplectic vector space with symplectic form $\omega$ and
compatible quaternionic structure $j$, and equip the compact Lie algebra $\mathfrak{sp}(W,j)$
with the invariant bilinear form
\[
B(\xi_1,\xi_2):=-\frac12\tr(\xi_1\xi_2).
\]
Under the identification
\[
\mathfrak{sp}(W,\omega)\cong \Sym^2(W)\subset W\otimes W,
\]
the moment map for the standard action of $\Sp (W,j)$ on $W$ is given by
\[
\mu_W(v)=\frac{i}{2}\bigl(v\otimes j(v)+j(v)\otimes v\bigr).
\]
Indeed, the tensor on the right lies in $\mathfrak{sp}(W,j)$, and one checks directly that
\[
B(\mu_W(v),\xi)=\frac{i}{2}\,h(\xi(v),v),
\qquad
\xi\in \mathfrak{sp}(W,j),
\]
which is the defining moment-map identity. Applying this fibrewise to the bundle $V$, and
contracting the two $K^{1/2}$-factors by means of the metric $h_0$, we obtain the section
\[
\mu_h(\psi)=\frac{i}{2}\bigl(\psi\otimes j(\psi)+j(\psi)\otimes \psi\bigr)h_0
\in \Omega^0(\Sigma,\End(V)).
\]
Here the notation $(\psi\otimes j(\psi))h_0$ means precisely contraction of the
$K^{1/2}\otimes K^{1/2}$-factor by $h_0$.

\begin{defin}
A Hermitian metric $h$ on a symplectic vector bundle $(V,\omega)$ is said to satisfy the
\emph{symplectic vortex equation} for the spinor $\psi$ if
\begin{equation}\label{eq:symplectic-vortex}
\Lambda(F_A)+\frac{i}{2}\bigl(\psi\otimes j(\psi)+j(\psi)\otimes \psi\bigr)h_0=0.
\end{equation}
\end{defin}

The point of \eqref{eq:symplectic-vortex} is that it is precisely the Hermite--Einstein equation
for $(\rho,K^{1/2})$-pairs in the case of the standard symplectic representation. More
explicitly, the left-hand side of \eqref{eq:symplectic-vortex} is the moment map for the
natural action of the unitary gauge group on the K\"ahler manifold of pairs $(A,\psi)$
consisting of an $\Sp (n)$-connection $A$ on $V$ together with a section
\[
\psi\in \Omega^0(\Sigma,V\otimes K^{1/2})
\]
which is holomorphic with respect to the holomorphic structure induced by $A$; see, for example,
\cite{relativehk,MIR00}.

We therefore obtain the following specialization of the general Hitchin--Kobayashi
correspondence for twisted pairs.

\begin{thm}\label{thm:symplectic-vortex-HK}
Let $(V,\psi)$ be a pair, where $(V,\omega)$ is a symplectic vector bundle on $\Sigma$ and
\[
\psi\in H^0(\Sigma,V\otimes K^{1/2}).
\]
Then the following are equivalent:
\begin{enumerate}[(i)]
\item $(V,\psi)$ is polystable;

\item there exists a quaternionic structure $j$ on $V$, equivalently a Hermitian metric
$h(u,v)=\omega(u,jv)$, whose Chern connection $A$ satisfies the symplectic vortex equation
\eqref{eq:symplectic-vortex}.
\end{enumerate}
Moreover, if $(V,\psi)$ is stable, then the solution metric is unique up to unitary gauge.
\end{thm}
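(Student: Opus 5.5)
The plan is to deduce Theorem~\ref{thm:symplectic-vortex-HK} from the general Hitchin--Kobayashi correspondence for twisted pairs in \cite{GPGMiR09,relativehk,MIR00}, and then to translate both sides into the concrete language of symplectic bundles. The general theorem concerns a pair $(P,\psi)$ with $P$ a principal $G$-bundle, $G$ reductive, and $\psi$ a section of $P(\V)\otimes L$ with $L$ a line bundle carrying a fixed metric. It states that $(P,\psi)$ is polystable, in the sense of Schmitt's criterion recalled in Section~\ref{sec:2} with stability parameter $\alpha=0$, if and only if there is a reduction $h$ to the maximal compact subgroup $H$ satisfying
\[
\Lambda F_h+\mu_h(\psi)=0 .
\]
Here $\mu_h$ is the moment map for the $H$-action on $\V$, twisted by the metric on $L$. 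It also states that, for stable pairs, this reduction is unique up to $H$-gauge. We apply it with $G=\Sp_{2n}(\C)$, $H=\Sp(n)$, $\V=\C^{2n}$, $L=K^{1/2}$, and the metric $h_0$ on $L$. Since $G$ is semisimple, the central parameter vanishes, and the polystability notion is exactly the one in Section~\ref{sec:2}.

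The translation proceeds in three steps. First, an $\Sp(n)$-reduction of the symplectic bundle is the same thing as a compatible quaternionic structure $j$, equivalently a Hermitian metric $h(u,v)=\omega(u,jv)$; the $H$-connection determined by the holomorphic structure is then the Chern connection $A$ of $h$. Second, we identify the moment map. Using the pointwise formula $\mu_W(v)=\frac{i}{2}(v\otimes jv+jv\otimes v)$, we check that $B(\mu_W(v),\xi)=\frac{i}{2}h(\xi v,v)$ for $\xi\in\mathfrak{sp}(W,j)$. This requires computing $-\tfrac12\tr(\xi\circ(v\otimes jv+jv\otimes v))$ under the identification $\Sym^2W\cong\mathfrak{sp}(W,\omega)$ given by $(a\otimes b)(x)=\omega(a,x)b$, and using $\xi\in\mathfrak{sp}$ to combine the two terms. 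Contracting the $K^{1/2}$ factors with $h_0$ then shows that the general equation becomes exactly \eqref{eq:symplectic-vortex}. Third, we match the polystability notions: the general theorem's polystability of $(P,\psi)$ is precisely the condition made explicit in Proposition~\ref{prop:spinor-stability}(iii).

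The main obstacle is bookkeeping rather than analysis. We must verify that the normalizations agree: the choice of $B$, the sign and factor $i$ in the moment map, and the convention for $\Lambda$. The general theorem is usually stated with $\Lambda F_h-i\mu+\dots$ or with $\mu$ valued in $\lie{h}^*$. We would fix the signs by testing against a known case, namely Example~\ref{ex:maximal-spinor}, where $V=K^{-1/2}\oplus K^{1/2}\oplus U$ and $\psi=1$. There the equation should split into a vortex equation for the line bundle $K^{-1/2}$ with a constant section, which has a solution of the correct sign exactly because $\deg K^{-1/2}<0$. This sign is forced by the semistability direction in Proposition~\ref{prop:spinor-stability}(ii), which requires $\deg U\le 0$ for isotropic $U$ with $\psi\in U^\perp$.

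For the reverse implication, we would also recall the standard direct argument as a sanity check. Suppose a solution $h$ exists and $U\subset V$ is isotropic with $\psi\in U^\perp\otimes K^{1/2}$. The Chern--Weil formula gives
\[
\deg U=\frac{i}{2\pi}\int\tr(\pi_U\Lambda F_A)-\|\bar\partial\pi_U\|^2 ,
\]
where $\pi_U$ is the orthogonal projection onto $U$. The moment-map term contributes $-\int h(\pi_U\psi,\psi)$-type quantities with a nonpositive sign; here one uses that $j$ maps $U^\perp$ to $U^{\perp_h}$. This yields $\deg U\le0$, with the equality case forcing a holomorphic orthogonal splitting, which is the polystability condition of Proposition~\ref{prop:spinor-stability}(iii). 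Uniqueness for stable pairs is quoted directly from the general theorem: stable pairs are simple modulo $\pm1$, which acts trivially on the gauge orbit of $h$.
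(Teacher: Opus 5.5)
Your proposal follows the paper's proof. Both apply \cite[Theorem 2.24]{GPGMiR09} with $H^{\C}=\Sp_{2n}(\C)$, $H=\Sp(n)$, $L=K^{1/2}$ and vanishing central parameter, then identify the quadratic term with the compact moment map $\frac{i}{2}\bigl(\psi\otimes j(\psi)+j(\psi)\otimes\psi\bigr)h_0$; your normalization checks and the direct Chern--Weil argument for (ii)$\Rightarrow$(i) are consistent extras. The one slip is the aside that stable pairs are ``simple modulo $\pm1$'': stability does not imply simplicity, and $-\Id$ does not fix a nonzero $\psi$, but since you, like the paper, quote uniqueness directly from the general theorem, nothing depends on it.
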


\begin{proof}
We apply \cite[Theorem 2.24]{GPGMiR09} to the standard representation of
\[
H^\C=\Sp_{2n}(\C),
\qquad
H=\Sp (n),
\qquad
B=\C^{2n},
\qquad
L=K^{1/2}.
\]
Since $\Sp_{2n}(\C)$ is semisimple, the central parameter $\alpha$ in the general
Hermite--Einstein equation is necessarily zero. Thus \cite[Theorem 2.24]{GPGMiR09}
identifies polystable $K^{1/2}$-twisted pairs with solutions of the corresponding
moment-map equation.

In the present case, the quadratic term in that equation is exactly the compact moment map
computed above, so the Hermite--Einstein equation becomes \eqref{eq:symplectic-vortex}.
This proves the equivalence between (i) and (ii). The uniqueness statement in the stable case
is the corresponding uniqueness statement in \cite[Theorem 2.24]{GPGMiR09}.
\end{proof}

\begin{rmk}\label{rmk:algebro-gauge}
From now on we shall pass freely between the algebro-geometric and gauge-theoretic
descriptions of the spinor moduli space. In particular, when convenient, we shall represent a
polystable point of $\cP$ by a solution $(A,\psi)$ of \eqref{eq:symplectic-vortex}, where $A$
is the Chern connection of a Hermitian metric solving the equation.
\end{rmk}

%% file: sections/sec8.tex
The spinor moduli space carries a natural $\C^\times$-action
\begin{equation}\label{eq:Cstar-spinor-action}
\lambda\cdot (V,\psi)=(V,\lambda\psi).
\end{equation}
On the gauge-theoretic side, if $(A,\psi)$ represents a polystable pair, where $A$ is the
Chern connection of a Hermitian metric solving the symplectic vortex equation, the induced
$S^1$-action is
\begin{equation}\label{eq:S1-spinor-action}
e^{i\theta}\cdot(A,\psi)=(A,e^{i\theta}\psi).
\end{equation}
As in the Higgs bundle case, this action is Hamiltonian on the smooth locus, with moment map
given by the squared $L^2$-norm of the spinor:
\begin{equation}\label{eq:spinor-morse-function}
f:\cP\to \R,
\qquad
(V,\psi)\mapsto \|\psi\|_{L^2}^2:=\int_\Sigma |\psi|^2\, d\vol .
\end{equation}

\begin{prop}\label{prop:spinor-morse-proper}
The function $f$ is proper. In particular, it attains its maximum on every connected component
of $\cP$.
\end{prop}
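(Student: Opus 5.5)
The simplest route uses the projectivity of $\cP$ established in Corollary~\ref{cor:projective-standard}. A continuous real-valued function on a compact Hausdorff space is automatically proper. So the plan is to show that $f$ is a well-defined continuous function on the analytic space underlying $\cP$, and then to conclude from compactness of $\cP$ in the classical topology.

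\emph{Well-definedness.} By Theorem~\ref{thm:symplectic-vortex-HK}, every point of $\cP$ is represented by a polystable pair $(V,\psi)$ together with a metric $h$ solving \eqref{eq:symplectic-vortex}. I would check that $\|\psi\|_{L^2}^2$, computed with $h$ and $h_0$, does not depend on the choices involved.
\begin{enumerate}[(a)]
\item The polystable representative of an $S$-equivalence class is unique up to isomorphism.
\item The solution metric is unique up to unitary gauge, and also up to automorphisms of the polystable object. Both kinds of transformation preserve the pointwise norm $|\psi|_h$.
\end{enumerate}
The uniqueness in (b) for strictly polystable pairs is the uniqueness part of \cite[Theorem 2.24]{GPGMiR09}.

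\emph{Continuity.} Identify $\cP$ with the gauge-theoretic moduli space of solutions $(A,\psi)$ modulo unitary gauge, taken with the $C^\infty$ topology; Remark~\ref{rmk:algebro-gauge} allows this. The functional $(A,\psi)\mapsto\int_\Sigma|\psi|^2\,d\vol$ is gauge invariant and continuous for that topology. The identification of this moduli space with $\cP$ as topological spaces is the standard homeomorphism furnished by the Hitchin--Kobayashi correspondence, and this is the step I expect to be the main obstacle. To keep the argument short I would take it as the usual output of the correspondence in \cite{GPGMiR09,relativehk}.

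\emph{Alternative avoiding the topology question.} One can prove properness directly by an a priori bound. Take the inner product of \eqref{eq:symplectic-vortex} with $\psi$ and apply the Weitzenböck identity $\Delta|\psi|^2 = -2|\nabla\psi|^2 + \langle i\Lambda F_A\psi,\psi\rangle$, plus curvature terms from $K^{1/2}$. At a maximum of $|\psi|^2$ the quadratic term $|\psi|^4$ is controlled by the fixed curvature of $h_0$. This gives a uniform $C^0$ bound on $\psi$, and Uhlenbeck compactness then yields compactness of sublevel sets. Given Corollary~\ref{cor:projective-standard}, however, the first argument suffices: $f$ is continuous on the compact space $\cP$, hence proper and bounded.

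For the maximum, each connected component of $\cP$ is closed in the compact space $\cP$, hence compact, and the continuous function $f$ attains its maximum on it.
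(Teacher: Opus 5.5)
Your main argument is the paper's: $\cP$ is projective by Corollary~\ref{cor:projective-standard}, so its analytic space is compact, the continuous function $f$ is therefore proper, and it attains its maximum on each connected component because components are closed and hence compact. Your checks of well-definedness (the vortex metric is unique up to automorphisms of the polystable pair, and these preserve $|\psi|$) and of continuity through the Hitchin--Kobayashi identification make explicit what the paper takes for granted; the Weitzenb\"ock alternative is not needed.
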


\begin{proof}
By Section~6, the moduli space $\cP$ is projective. Hence its associated complex analytic space
is compact. Since $f$ is continuous, it is proper.
\end{proof}

We now derive a useful bound on $f$. Fix once and for all a Hermitian metric $h_\Sigma$ on
$\Sigma$ of constant curvature $-1$, with associated K\"ahler form $d\vol$, and let
\[
h_0:=h_\Sigma^{-1/2}
\]
be the induced Hermitian metric on $K^{1/2}$.

The key point is that the symplectic vortex equation controls the degree of the line bundle
generated by the spinor.

\begin{lem}\label{lem:degree-identity-spinor}
Let $(V,\psi)\in \cP$, and assume that $\psi\neq 0$. Let $h$ be a Hermitian metric on $V$
solving the symplectic vortex equation, and let
\[
L_\psi\subset V
\]
be the saturation of the line subsheaf generated by the image of the morphism
\[
\psi:K^{-1/2}\to V.
\]
Let
\[
V=L_\psi\oplus Q
\]
be the $h$-orthogonal smooth splitting, and let
\[
\beta\in \Omega^{0,1}(\Sigma,\Hom(Q,L_\psi))
\]
be the corresponding second fundamental form. Then
\begin{equation}\label{eq:degree-identity-spinor}
\deg(L_\psi)
=
-\frac{1}{4\pi}\,\|\psi\|_{L^2}^2
-\frac{1}{2\pi}\int_\Sigma |\beta|^2\,d\vol .
\end{equation}
\end{lem}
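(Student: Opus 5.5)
The plan is to combine the Chern--Weil formula for a holomorphic subbundle with the $L_\psi$-component of the symplectic vortex equation \eqref{eq:symplectic-vortex}. Write $\pi_L$ for the $h$-orthogonal projection onto $L_\psi$, and let $A_L$ be the Chern connection of $L_\psi$ with the induced metric. The standard Gauss--Codazzi formula for a holomorphic subbundle of a Hermitian holomorphic bundle gives
\[
F_{A_L}=\pi_L F_A\pi_L-\beta\wedge\beta^*,
\]
where $\beta$ is the second fundamental form from the statement. Taking $\frac{i}{2\pi}\int_\Sigma$ of the trace therefore gives
\[
\deg L_\psi=\frac{i}{2\pi}\int_\Sigma \tr\bigl(\pi_L F_A\pi_L\bigr)-\frac{1}{2\pi}\int_\Sigma|\beta|^2\,d\vol .
\]
The second term is exactly the $\beta$-term in \eqref{eq:degree-identity-spinor}. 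Its sign is the usual one: subbundles have curvature decreased by the second fundamental form. I will fix the normalisation of $|\beta|^2$ so that $i\,\tr(\beta\wedge\beta^*)=|\beta|^2\,d\vol$.

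For the first term, I write $\tr(\pi_LF_A\pi_L)=\Lambda\bigl(\tr(\pi_LF_A\pi_L)\bigr)\,d\vol$ and replace $\Lambda F_A$ using \eqref{eq:symplectic-vortex}. This reduces the problem to computing the $L_\psi$-diagonal entry of $\frac{i}{2}\bigl(\psi\otimes j\psi+j\psi\otimes\psi\bigr)h_0$.

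\begin{itemize}
\item First I work on the dense open set where $\psi\neq0$. There $L_\psi$ is spanned pointwise by $\psi$, so I take the unit vector $e=\psi/|\psi|$, where $|\psi|^2=h(\psi,\psi)h_0$.
\item Under $\Sym^2\cong\lie{sp}$, the term $j\psi\otimes\psi$ acts on $\psi$ through $\omega(\psi,\psi)=0$, so it contributes nothing.
\item The term $\psi\otimes j\psi$ acts on $\psi$ through $\omega(j\psi,\psi)=\pm h(\psi,\psi)$, which gives diagonal entry $\pm|\psi|^2$.
\end{itemize}

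Hence, pointwise, $\tr(\pi_L\Lambda F_A\pi_L)=c\,i\,|\psi|^2$ for a universal constant $c=\pm\frac12$. Integrating, the first term becomes $-\frac{1}{4\pi}\|\psi\|_{L^2}^2$ once the sign is fixed. The sign is forced by $\deg L_\psi\le0$ (Lemma~\ref{lem:Lpsi-degree}), and it is also visible directly from the convention that $\frac{i}{2}\,(\cdot)$ lies in $\lie{sp}(n)$ and must be negative semidefinite after multiplication by $i$ in the moment-map identity $B(\mu_W(v),\xi)=\frac{i}{2}h(\xi v,v)$.

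The zeros of $\psi$ form a finite set. Both integrands, $\tr(\pi_L\,\cdot\,\pi_L)$ and $|\psi|^2$, are smooth on all of $\Sigma$ because $L_\psi$ is the saturation and hence a genuine subbundle. So the pointwise identity extends by continuity and the integrals are unaffected.

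The main obstacle I expect is bookkeeping of conventions: the identification $\Sym^2(W)\cong\lie{sp}(W)$ (that is, $(a\otimes b)(v)=\omega(b,v)a$ versus its symmetrisation), the relation $h(u,v)=\omega(u,jv)$, and the normalisation of $\Lambda$ and of $|\beta|^2$. I would handle this by checking the constant on the model pair of Example~\ref{ex:maximal-spinor} in the case $\beta=0$. There $L_\psi=K^{-1/2}$ is a holomorphic orthogonal summand and $\deg L_\psi=1-g$. With the constant-curvature $-1$ metric, $\mathrm{vol}(\Sigma)=4\pi(g-1)$, and $\psi=1$ with $|\psi|^2\equiv1$ solves the equation. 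This gives $-\frac{1}{4\pi}\cdot4\pi(g-1)=1-g$, which pins down the factor $\frac{1}{4\pi}$ and its sign.
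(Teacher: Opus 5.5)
Your proposal follows the paper's proof: the block form of the Chern connection for the $h$-orthogonal splitting $V=L_\psi\oplus Q$, Chern--Weil for $L_\psi$, and the fact that the vortex term acts on $L_\psi$ by $-|\psi|^2$ (the paper reads this sign directly off $\omega(j\psi,\psi)=-h(\psi,\psi)$ rather than calibrating on the model pair). One slip: for $\beta$ of type $(0,1)$ the upper-left block of $F_A$ is $F_{A_L}-\beta\wedge\beta^*$, so $F_{A_L}=\pi_LF_A\pi_L+\beta\wedge\beta^*$ and $i\,\tr(\beta\wedge\beta^*)=-|\beta|^2\,d\vol\leq0$; both of your signs are flipped and they cancel, so the final identity, with $|\beta|^2\geq0$, is unaffected.
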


\begin{proof}
Let $A$ be the Chern connection of $(V,h)$. With respect to the $h$-orthogonal splitting
\[
V=L_\psi\oplus Q,
\]
the connection has the form
\[
A=
\begin{pmatrix}
A_\psi & \beta \\
-\beta^* & A_Q
\end{pmatrix},
\]
where $A_\psi$ and $A_Q$ are the induced Chern connections on $L_\psi$ and $Q$,
respectively, and $\beta^*$ is the adjoint of $\beta$.

The curvature of $A$ therefore has upper-left block
\[
F_{A_\psi}-\beta\wedge\beta^*.
\]
Hence Chern--Weil theory gives
\begin{equation}\label{eq:degree-Lpsi-CW}
\deg(L_\psi)
=
\frac{i}{2\pi} \int_\Sigma F_{A_\psi}
=
\frac{1}{2\pi}\int_\Sigma \tr(\pi\, i\Lambda F_A)\, d\vol
-\frac{1}{2\pi}\int_\Sigma |\beta|^2\, d\vol,
\end{equation}
where $\pi$ denotes the orthogonal projection onto $L_\psi$.

We now compute the first term using the symplectic vortex equation
\[
\Lambda(F_A)+\frac{i}{2}\bigl(\psi\otimes j(\psi)+j(\psi)\otimes\psi\bigr)h_0=0.
\]
Under the identification of tensors with endomorphisms used throughout the paper, one has
\[
(\psi\otimes j(\psi))(\psi)=\omega(j(\psi),\psi)\,\psi,
\qquad
(j(\psi)\otimes \psi)(\psi)=\omega(\psi,\psi)\,j(\psi)=0.
\]
Since
\[
\omega(j(\psi),\psi)=-h(\psi,\psi)=-|\psi|^2,
\]
it follows that the endomorphism
\[
\bigl(\psi\otimes j(\psi)+j(\psi)\otimes\psi\bigr)h_0
\]
acts on the line bundle $L_\psi$ by multiplication by $-|\psi|^2$. Therefore
\[
\tr(\pi\, i\Lambda F_A)=-\frac12\,|\psi|^2.
\]
Substituting this into \eqref{eq:degree-Lpsi-CW} yields
\eqref{eq:degree-identity-spinor}.
\end{proof}

\begin{prop}\label{prop:spinor-norm-bound}
For every $(V,\psi)\in \cP$ one has
\begin{equation}\label{eq:spinor-norm-bound}
0\leq f(V,\psi)\leq 4\pi(g-1).
\end{equation}
\end{prop}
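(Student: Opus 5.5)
The bound follows by combining the degree identity of Lemma~\ref{lem:degree-identity-spinor} with the lower bound $\deg(L_\psi)\geq 1-g$ of Lemma~\ref{lem:Lpsi-degree}. The inequality $f\geq 0$ is immediate, since $f$ is a squared $L^2$-norm. We may therefore assume $\psi\neq 0$; if $\psi=0$, then $f=0$ and there is nothing to prove.

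Take a point of $\cP$ with $\psi\neq0$. Since closed points of $\cP$ are $S$-equivalence classes, we first replace $(V,\psi)$ by its polystable representative. By Theorem~\ref{thm:symplectic-vortex-HK} this representative carries a Hermitian metric $h$ solving the symplectic vortex equation \eqref{eq:symplectic-vortex}, and $f$ is evaluated using this solution (Remark~\ref{rmk:algebro-gauge}). Let $L_\psi$ be the saturated line subbundle generated by $\psi$. Lemma~\ref{lem:degree-identity-spinor} gives
\[
\deg(L_\psi)=-\frac{1}{4\pi}\|\psi\|_{L^2}^2-\frac{1}{2\pi}\int_\Sigma|\beta|^2\,d\vol .
\]
The second term is non-positive, so
\[
\frac{1}{4\pi}\|\psi\|_{L^2}^2\leq -\deg(L_\psi).
\]
The polystable pair is in particular semistable, so Lemma~\ref{lem:Lpsi-degree} gives $\deg(L_\psi)\geq 1-g$, that is, $-\deg(L_\psi)\leq g-1$. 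Combining the two inequalities yields $\|\psi\|_{L^2}^2\leq 4\pi(g-1)$.

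The only point needing care is the passage to the polystable representative. We must check that $f$ is well defined on $S$-equivalence classes, and that it is computed by the vortex solution on the polystable object. This is essentially built into the definition of $f$ via Theorem~\ref{thm:symplectic-vortex-HK}. Since the polystable object is itself a semistable pair with $\psi\neq0$, the hypotheses of both lemmas apply to it.
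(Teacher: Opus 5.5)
Your argument is correct and is the same as the paper's: combine the lower bound $\deg(L_\psi)\geq 1-g$ from Lemma~\ref{lem:Lpsi-degree} with the degree identity of Lemma~\ref{lem:degree-identity-spinor}, and drop the non-positive $\beta$-term. One small slip is in your extra remark that the polystable representative is ``a semistable pair with $\psi\neq0$'': this can fail, since by Lemma~\ref{lem:Lpsi-degree}(ii) the spinor of the polystable representative vanishes whenever $\deg(L_\psi)=0$, so you should make the case split on the polystable representative, where the case $\psi=0$ gives $f=0$ trivially.
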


\begin{proof}
The lower bound is immediate. If $\psi=0$, then $f(V,\psi)=0$, so there is nothing to prove.

Assume now that $\psi\neq 0$. By Lemma~\ref{lem:Lpsi-degree},
\[
1-g\leq \deg(L_\psi)\leq 0.
\]
Combining the lower bound with Lemma~\ref{lem:degree-identity-spinor}, we obtain
\[
1-g
\leq
-\frac{1}{4\pi}\,\|\psi\|_{L^2}^2
-\frac{1}{2\pi}\int_\Sigma |\beta|^2\, d\vol
\leq
-\frac{1}{4\pi}\,\|\psi\|_{L^2}^2,
\]
and hence
\[
\|\psi\|_{L^2}^2\leq 4\pi(g-1).
\]
This proves \eqref{eq:spinor-norm-bound}.
\end{proof}

\begin{rmk}\label{rmk:equality-spinor-bound}
Assume that $\psi\neq 0$. From the proof above, equality
\[
\|\psi\|_{L^2}^2=4\pi(g-1)
\]
holds if and only if
\[
\deg(L_\psi)=1-g
\qquad\text{and}\qquad
\beta=0.
\]
By Lemma~\ref{lem:Lpsi-degree}, the first condition is equivalent to
\[
L_\psi\cong K^{-1/2}
\]
and to the fact that $\psi$ is nowhere vanishing. The second condition means that
\[
L_\psi\subset V
\]
is a holomorphic direct summand. This criterion will be used in Section~9.
\end{rmk}

We now describe the critical points of the Morse function. By a theorem of Frankel \cite{frank}, a proper moment map for a Hamiltonian circle action on a compact K\"ahler manifold is a perfect Morse--Bott function. In the form needed here, this gives the following standard statement; see also \cite{GPGMn07}.

\begin{prop}\label{prop:critical-points-spinor}
The critical points of $f$ on the smooth locus $\cP^{\mathrm{sm}}$ are precisely the fixed
points of the $S^1$-action \eqref{eq:S1-spinor-action}. Moreover, if
\[
T_{(V,\psi)}\cP^{\mathrm{sm}}=\bigoplus_{\ell\in \Z} T_\ell
\]
is the weight decomposition for the infinitesimal $S^1$-action at a fixed point, then the
$\ell$-eigenspace of the Hessian of $f$ is $T_{-\ell}$. In particular, the Morse index at a
critical point is the dimension of the positive-weight subspace of the circle action.
\end{prop}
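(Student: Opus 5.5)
The proof follows the Hitchin and García-Prada--Gothen--Muñoz strategy for Higgs bundles, transplanted to the spinor moduli space. There are three steps: fixed points are critical points, critical points on the smooth locus are fixed points, and the Hessian is identified with the weight decomposition. Throughout we work on $\cP^{\mathrm{sm}}$. There the Kähler form $\Omega$ is induced from the $L^2$-metric on pairs $(A,\psi)$ by the infinite-dimensional Kähler quotient construction behind Theorem~\ref{thm:symplectic-vortex-HK}. The circle action \eqref{eq:S1-spinor-action} preserves both the vortex equation \eqref{eq:symplectic-vortex} and the $L^2$-metric, so it descends to an isometric holomorphic action on $\cP^{\mathrm{sm}}$.

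\textbf{Step 1: $f$ is a moment map.} The generating vector field upstairs is $X=(0,i\psi)$. For a tangent vector $(\dot A,\dot\psi)$ one has
\[
\iota_X\Omega(\dot A,\dot\psi)=\mathrm{Re}\int_\Sigma h(i\cdot i\psi,\dot\psi)\,d\vol
=-\tfrac12\, d\|\psi\|_{L^2}^2(\dot\psi),
\]
up to normalisation. Since $X$ is tangent to the level set of the gauge moment map and is gauge invariant, this identity descends to the quotient, so $f$ is a moment map for the $S^1$-action up to a constant factor. Consequently $df=0$ at a point exactly when the fundamental vector field vanishes there. As the action is the restriction of a holomorphic $\C^\times$-action, vanishing of the fundamental vector field means the point is fixed by the circle. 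This shows that the critical points of $f$ on $\cP^{\mathrm{sm}}$ are precisely the fixed points.

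\textbf{Step 2: the Hessian.} At a fixed point $x$ we linearise. The circle acts on $T_x\cP^{\mathrm{sm}}$ by a unitary representation, which we decompose as $\bigoplus_\ell T_\ell$, with $e^{i\theta}$ acting on $T_\ell$ by $e^{i\ell\theta}$. The Hamiltonian vector field of $f$ is then the linear vector field $J\cdot(\text{weight operator})$ up to sign. Differentiating $df=\iota_X\Omega$ gives $\mathrm{Hess}_x f(v,w)=\Omega(\nabla_vX,w)$. On $T_\ell$ one has $\nabla_vX=i\ell v$, so the Hessian equals $\pm\ell\, g(v,w)$ there. Fixing signs with the convention that $f$ increases along the gradient flow toward large spinors, the $\ell$-eigenspace of the Hessian is $T_{-\ell}$, as stated. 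The index is therefore the real dimension of the negative-eigenvalue part of the Hessian, which is the sum of the $T_\ell$ of one sign. This matches the statement once the orientation convention is pinned down, which is the one convention check required; the author's sign choice presumably matches \cite{GPGMn07}.

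\textbf{Main obstacle.} The main difficulty is justifying Step 1 on $\cP^{\mathrm{sm}}$. This requires that, at smooth points, the Kähler structure on the coarse space agrees with the quotient Kähler structure, so that the equivariant Darboux (Bochner linearisation) argument applies. I would invoke the standard slice theorem at stable simple pairs: there the gauge stabiliser is finite, the tangent space is $\K^1(\Sigma,C^\bullet_{\mathrm{pair}})$, and harmonic representatives realise it, exactly as in \cite{GPGMn07}. With this in place, Frankel's theorem \cite{frank} yields the perfect Morse--Bott statement formally.
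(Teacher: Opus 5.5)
Your route is the paper's. The paper proves this proposition only by appeal to the standard moment-map Morse theory for Hamiltonian circle actions \cite[Proposition~3.3]{GPGMn07}, and your Step~1 (moment map, nondegeneracy of $\Omega$, vanishing of the generating field) is a fair expansion of that.

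The gap is the sign in Step~2, which you never actually determine. The phrase ``$f$ increases along its gradient flow'' holds for every function and fixes nothing. Carry the computation through with your own normalizations: $\Omega(u,v)=\mathrm{Re}\langle iu,v\rangle$, $\iota_X\Omega=-\tfrac12\,df$, and $T_\ell$ the subspace on which $e^{i\theta}$ acts by $e^{i\ell\theta}$ under the induced action. Then
\[
\mathrm{Hess}_x f(v,w)=-2\,\Omega(\nabla_vX,w)=2\ell\,g(v,w)\quad\text{on } T_\ell .
\]
So in your convention the normalized eigenvalue-$\ell$ space is $T_\ell$, and the index is the negative-weight part. That is the opposite of what you conclude ``as stated''.

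The maximum locus confirms this. At $(K^{-1/2}\oplus K^{1/2}\oplus U,1)$ with $U$ stable, the normal directions are $H^1(\Sigma,U\otimes K^{-1/2})$ and $H^1(\Sigma,K^{-1})$, and $f$ must decrease along them. Transporting a variation of the $\Hom(U,K^{-1/2})$ block, respectively the $\Sym^2K^{-1/2}$ block, back by $g_\theta^{-1}$ multiplies it by $e^{-i\theta}$, respectively $e^{-2i\theta}$. So the induced weights there are $-1$ and $-2$.

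What makes the proposition true is a different grading convention, and you must state it and use it. At a fixed point, take $\Upsilon$ as in Proposition~\ref{prop:fixed-points-spinor}. Split the deformation complex by $\mathrm{ad}\,\Upsilon$ into subcomplexes $C^\bullet_k:\Ad(V)_k\to V_{k+1}\otimes K^{1/2}$, where $\Ad(V)_k$ is the $ik$-eigenbundle of $\mathrm{ad}\,\Upsilon$, and set $T_k:=\K^1(\Sigma,C^\bullet_k)$.

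The induced circle action on $T_k$ is $e^{-ik\theta}$. Conjugation by $g_\theta^{-1}$ gives $e^{-ik\theta}$ on $\Ad(V)_k$, and $e^{i\theta}g_\theta^{-1}$ gives $e^{i(1-(k+1))\theta}$ on $V_{k+1}\otimes K^{1/2}$. Your formula then yields $\mathrm{Hess}_xf=-2k\,g$ on $T_k$. So the eigenvalue-$\ell$ space is $T_{-\ell}$ up to the normalization constant, and the index is $\dim_\R\bigoplus_{k>0}T_k$.

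This is the sense of ``weight'' in which the cited Higgs-bundle statement, and hence the proposition, holds. It is consistent with the maximum: the normal directions there are exactly $\K^1(\Sigma,C^\bullet_1)\oplus\K^1(\Sigma,C^\bullet_2)$. Without fixing this convention explicitly, your Step~2 establishes the opposite sign.
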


\begin{proof}
This is the standard moment-map Morse theory for Hamiltonian circle actions on K\"ahler
manifolds; see, for example, \cite[Proposition 3.3]{GPGMn07}. The argument is exactly the same
as in the Higgs bundle case.
\end{proof}

Let $(V,\psi)\in \cP^{\mathrm{sm}}$ be a fixed point of the circle action, and assume $\psi\neq 0$.
Choose a solution of the symplectic vortex equation representing this point, and let $A$ be the
corresponding Chern connection. Then there exists a one-parameter family of $\Sp (n)$-gauge
transformations $\{g_\theta\}_{\theta\in \R}$ such that
\[
g_\theta\cdot (A,\psi)=(A,e^{i\theta}\psi).
\]
Define the associated infinitesimal gauge transformation by
\[
\Upsilon:=\frac{d g_\theta}{d\theta}\Big|_{\theta=0}.
\]
Differentiating the fixed-point condition yields
\begin{equation}\label{eq:Upsilon-covariant}
d_A\Upsilon=0,
\end{equation}
and
\begin{equation}\label{eq:Upsilon-spinor}
\Upsilon\psi=i\psi.
\end{equation}

\begin{prop}\label{prop:fixed-points-spinor}
Let $(V,\psi)\in \cP^{\mathrm{sm}}$ with $\psi\neq 0$. Then $(V,\psi)$ is fixed by the circle
action if and only if there exists a covariantly constant infinitesimal gauge transformation
$\Upsilon$ such that
\[
V=\bigoplus_{\ell>0}(V_\ell\oplus V_{-\ell})\oplus V_0,
\]
where:
\begin{enumerate}[(i)]
\item $V_\ell$ is the eigensubbundle of $\Upsilon$ with eigenvalue $i\ell$;

\item
\[
\psi\in H^0(\Sigma,V_1\otimes K^{1/2});
\]

\item the quaternionic structure $j$ determined by the vortex metric satisfies
\[
j(V_\ell)=V_{-\ell},
\qquad
V_{-\ell}\cong V_\ell^*;
\]

\item the summands $V_0$ and $V_\ell\oplus V_{-\ell}$ for $\ell>1$ are polystable
symplectic vector bundles.
\end{enumerate}
\end{prop}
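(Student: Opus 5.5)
The plan is to follow the standard argument for $\C^\times$-fixed points of Higgs bundles (Hitchin, Simpson, García-Prada--Gothen--Muñoz), adapted to pairs. I will prove the two directions separately.

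\emph{Fixed point implies the splitting.} Suppose $(V,\psi)$ is fixed. Then for each $\theta$ the pair $(A,e^{i\theta}\psi)$ is gauge equivalent to $(A,\psi)$. Both pairs solve the symplectic vortex equation \eqref{eq:symplectic-vortex}, since the quadratic term is invariant under $\psi\mapsto e^{i\theta}\psi$. By the uniqueness part of Theorem~\ref{thm:symplectic-vortex-HK}, there is an $\Sp(n)$-gauge transformation $g_\theta$ with $g_\theta\cdot(A,\psi)=(A,e^{i\theta}\psi)$.

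At a smooth stable point the stabilizer is finite, so $g_\theta$ is unique up to this finite group. After passing to a finite cover of the circle, we may therefore take $\theta\mapsto g_\theta$ to be a one-parameter subgroup. Its generator $\Upsilon$ satisfies \eqref{eq:Upsilon-covariant} and \eqref{eq:Upsilon-spinor}.

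Since $\Upsilon$ is $A$-parallel and skew-Hermitian, its eigenvalues are constant and purely imaginary. The eigenbundles $V_\lambda$ are then $A$-parallel, hence holomorphic subbundles. By \eqref{eq:Upsilon-spinor}, $\psi$ lies in the eigenbundle with eigenvalue $i$, and $\psi\ne 0$ forces this eigenvalue to occur.

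\emph{Integrality of the weights.} This is the step I expect to be the main obstacle: showing the eigenvalues are integers $i\ell$. The spinor only pins down the eigenvalue $i$, so there is no Higgs field that links the eigenspaces in a chain. I will argue that $g_{2\pi}$ fixes $(A,\psi)$. Hence $g_{2\pi}$ lies in the stabilizer, which is central and equal to $\pm 1$ on the simple locus. Replacing $\Upsilon$ by $\Upsilon$ minus a suitable element of the centralizer of the whole situation, one normalizes $\exp(2\pi\Upsilon)$ to be trivial. Such an element is parallel and kills $\psi$; adjusting it keeps (ii) intact, because it acts by $0$ on $V_1$. This normalization gives integral eigenvalues. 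Eigenvalues of $\pm i/2$ would instead give $\exp(2\pi\Upsilon)=-1$, which is central, and I would rule these out or absorb them by the same reasoning.

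\emph{The remaining properties.} Since $\Upsilon\in\mathfrak{sp}(n)$, it commutes with $j$ up to conjugation, i.e. $j\Upsilon=\Upsilon j$ with $j$ antilinear. This gives $j(V_\ell)=V_{-\ell}$, which is (iii). The infinitesimal symplectic condition $\omega(\Upsilon u,v)+\omega(u,\Upsilon v)=0$ shows that $\omega$ pairs $V_\ell$ with $V_{-\ell}$ perfectly, so $V_{-\ell}\cong V_\ell^*$ and each $V_\ell\oplus V_{-\ell}$ and $V_0$ is symplectic.

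For (iv), I use that $\psi$ lies in $V_1$. Consequently, on $V_0$ and on $V_\ell\oplus V_{-\ell}$ with $\ell>1$, the vortex equation restricts, blockwise because the metric splits orthogonally, to $\Lambda F=0$. By Narasimhan--Seshadri/Ramanathan these summands are polystable.

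\emph{Converse.} Given such a $\Upsilon$, set $g_\theta=\exp(\theta\Upsilon)$. This is an $\Sp(n)$-gauge transformation that preserves $A$, because $\Upsilon$ is parallel, and satisfies $g_\theta\psi=e^{i\theta}\psi$. Hence the point is fixed.
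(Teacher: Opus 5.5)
Your proposal follows the paper's proof step for step: $\Upsilon$ parallel with holomorphic eigenbundles, $\omega$ pairing $V_\ell$ with $V_{-\ell}$, $j$ exchanging them, the vortex equation becoming flatness away from $V_1\oplus V_{-1}$, and $g_\theta=\exp(\theta\Upsilon)$ for the converse. Your use of the uniqueness clause of Theorem~\ref{thm:symplectic-vortex-HK} to produce $g_\theta$ fills in a step the paper only asserts.

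The one place your write-up goes wrong is the integrality step, which the paper also just asserts. The stabilizer of the \emph{pair} $(A,\psi)$ at a simple point is $Z(\Sp_{2n}(\C))\cap\ker\rho=\{\Id\}$, not $\{\pm\Id\}$, because $-\Id$ sends $\psi$ to $-\psi$. Hence $g_{2\pi}=\exp(2\pi\Upsilon)$ is the identity: it preserves $A$, and it fixes $\psi$ since $\Upsilon\psi=i\psi$. So the eigenvalues are integral. For the same reason the case $\exp(2\pi\Upsilon)=-\Id$ that you leave open cannot occur. Also, at a simple point your ``centralizer'' correction is zero, so it does no work there.

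If you want your normalization idea to carry the argument without a simplicity assumption, make it precise as follows. Let $X$ equal $\Upsilon$ on the sum of the eigenbundles with non-integral eigenvalue, and $0$ on the rest. This sum is parallel, $j$-stable, and $\omega$-orthogonal to the integral part, so $X$ is a parallel infinitesimal gauge transformation with $X\psi=0$. Then $\Upsilon-X$ still satisfies \eqref{eq:Upsilon-covariant} and \eqref{eq:Upsilon-spinor} and has integral eigenvalues. It only enlarges $V_0$ by a flat summand, so (iv) is unaffected.
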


\begin{proof}
Assume first that $(V,\psi)$ is fixed. By \eqref{eq:Upsilon-covariant}, the endomorphism
$\Upsilon$ is parallel, hence its eigenspaces are preserved by the connection and therefore define
global holomorphic subbundles. Since the gauge transformations $g_\theta$ form a circle action,
the eigenvalues of $\Upsilon$ are integral multiples of $i$. As $\Upsilon$ is locally valued in
$\mathfrak{sp}(n)$, the eigenvalues come in opposite pairs $\pm i\ell$, together possibly with the
eigenvalue $0$. We therefore obtain a decomposition
\[
V=\bigoplus_{\ell>0}(V_\ell\oplus V_{-\ell})\oplus V_0.
\]

Let $v_\ell\in V_\ell$ and $v_m\in V_m$. Then
\[
i\ell\,\omega(v_\ell,v_m)=\omega(\Upsilon v_\ell,v_m)
=-\omega(v_\ell,\Upsilon v_m)=-im\,\omega(v_\ell,v_m).
\]
Hence $\omega(v_\ell,v_m)=0$ unless $m=-\ell$. It follows that the symplectic form pairs
$V_\ell$ non-degenerately with $V_{-\ell}$, giving an isomorphism
\[
V_{-\ell}\cong V_\ell^*.
\]
Since the quaternionic structure $j$ is parallel and anti-linear, it sends the eigenspace of
$i\ell$ to that of $-i\ell$, so also
\[
j(V_\ell)=V_{-\ell}.
\]

The relation \eqref{eq:Upsilon-spinor} shows that $\psi$ lies in the eigensubbundle of
eigenvalue $i$, that is,
\[
\psi\in H^0(\Sigma,V_1\otimes K^{1/2}).
\]

Finally, the vortex term
\[
\bigl(\psi\otimes j(\psi)+j(\psi)\otimes\psi\bigr)h_0
\]
acts only on the summand $V_1\oplus V_{-1}$. Therefore the symplectic vortex equation reduces
to the flat equation on $V_0$ and on each $V_\ell\oplus V_{-\ell}$ with $\ell>1$. These
summands are thus polystable symplectic vector bundles.

Conversely, suppose that such a decomposition exists, and define a one-parameter family of
unitary gauge transformations by letting $g_\theta$ act as multiplication by $e^{i\ell\theta}$ on
$V_\ell$, by $e^{-i\ell\theta}$ on $V_{-\ell}$, and trivially on $V_0$. Since the decomposition is
parallel, $g_\theta$ preserves the connection $A$, and since $\psi$ lies in $V_1$, one has
\[
g_\theta\cdot (A,\psi)=(A,e^{i\theta}\psi).
\]
Thus $(V,\psi)$ is fixed.
\end{proof}

\begin{rmk}\label{rmk:zero-spinor-fixed}
Pairs of the form $(V,0)$ are, of course, fixed by the circle action. They will reappear as the
minimum locus of $f$, whereas the non-zero fixed points described in
Proposition~\ref{prop:fixed-points-spinor} govern the higher critical strata.
\end{rmk}

The next step is to isolate the fixed-point component on which the Morse function attains its
maximum and to relate its downward Morse flow to the Gaiotto locus inside the nilpotent cone.

%% file: sections/sec9.tex
Recall that a point of $\cM(\Sp_{2n}(\C))$ is represented by a triple
$(V,\omega,\Phi)$, where $V$ is a rank $2n$ vector bundle, $\omega$ is a
symplectic form, and
\[
\Phi:V\to V\otimes K
\]
is skew-symmetric with respect to $\omega$. For a spinor
\[
\psi\in H^0(\Sigma,V\otimes K^{1/2}),
\]
or equivalently a morphism $K^{-1/2}\to V$, the associated Higgs field is
\[
\Phi_\psi=\psi\otimes\psi,
\qquad
\Phi_\psi(v)=\omega(v,\psi)\psi.
\]
It satisfies $\Phi_\psi^2=0$, since $\omega(\psi,\psi)=0$. Thus, the Gaiotto locus $\cG$, introduced in \eqref{eq:notation-spinor-section}, satisfies 
\[
\cG\subset \Nilp (\Sp_{2n}(\C)):=h^{-1}(0)\subset \cM(\Sp_{2n}(\C)),
\]
where 
\[
h:\cM(\Sp_{2n}(\C))\to \bigoplus_{i=1}^n H^0(\Sigma,K^{2i}) 
\]
is the Hitchin fibration. The purpose of this section is to identify $\cG$ with an irreducible component
of the nilpotent cone of $\cM(\Sp_{2n}(\C))$.

\subsection{The maximal fixed-point component}

We begin by characterizing the points where the Morse function on $\cP$ attains its
maximum.

\begin{prop}\label{prop:maximal-fixed-locus}
Let $(V,\psi)\in \cP$. Then the following are equivalent:
\begin{enumerate}[(i)]
\item
\[
f(V,\psi)=4\pi(g-1);
\]

\item $(V,\psi)$ is isomorphic to
\[
(K^{-1/2}\oplus K^{1/2}\oplus U,1),
\]
where $U$ is a polystable symplectic vector bundle of rank $2n-2$, and
\[
1\in H^0(\Sigma,\cO_\Sigma)
\subset
H^0\!\left(\Sigma,(K^{-1/2}\oplus K^{1/2}\oplus U)\otimes K^{1/2}\right)
\]
is the canonical section of the $K^{-1/2}$-summand.
\end{enumerate}
In particular, the maximum locus of $f$ is naturally identified with
\[
\cU(\Sp_{2n-2}(\C)).
\]
Every point of this maximum locus is fixed by the circle action.
\end{prop}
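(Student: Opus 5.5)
The plan is to use the equality case of the norm bound recorded in Remark~\ref{rmk:equality-spinor-bound}, together with the splitting it produces and the polystability criterion of Proposition~\ref{prop:spinor-stability}.

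\emph{(i)$\Rightarrow$(ii).} Assume $f(V,\psi)=4\pi(g-1)$. Then $\psi\neq0$, and by Remark~\ref{rmk:equality-spinor-bound} the spinor $\psi$ is nowhere vanishing. The same remark gives $L_\psi\cong K^{-1/2}$, and $\beta=0$, so $L_\psi$ is a holomorphic direct summand of $V$. Splitting off this line, we next handle its symplectic dual. The symplectic form pairs $L_\psi$ non-degenerately with its $h$-orthogonal complement inside $V/L_\psi^\perp$. More concretely, set $M:=j(L_\psi)$. Because $\beta=0$, the Chern connection preserves $L_\psi$ and hence its orthogonal complement $Q$. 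Since $j$ is parallel, it also preserves $M=j(L_\psi)\subset Q$, and $M\cong\overline{L_\psi}\cong L_\psi^*\cong K^{1/2}$ holomorphically. Consider $U:=(L_\psi\oplus M)^{\perp_\omega}$. Since $j$ is compatible with $\omega$ and everything is parallel, $U$ is a parallel, hence holomorphic, symplectic subbundle of rank $2n-2$. This gives
\[
V=K^{-1/2}\oplus K^{1/2}\oplus U,
\]
with $\psi$ the canonical section of the first summand.

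The vortex term $\bigl(\psi\otimes j(\psi)+j(\psi)\otimes\psi\bigr)h_0$ acts only on $L_\psi\oplus M$. Consequently the vortex equation restricts on $U$ to $\Lambda F_{A_U}=0$, so $U$ is polystable by Narasimhan--Seshadri, or equivalently by Theorem~\ref{thm:symplectic-vortex-HK} with zero spinor. The main obstacle I expect is precisely this step: checking carefully that $\beta=0$, combined with parallelism of $j$, yields a holomorphic, $\omega$-orthogonal splitting rather than merely a smooth one. I would argue it through parallel subbundles of the vortex connection, as in the proof of Proposition~\ref{prop:fixed-points-spinor}.

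\emph{(ii)$\Rightarrow$(i).} Example~\ref{ex:maximal-spinor} shows that the pair is polystable. For a solution metric, take the orthogonal sum of a flat metric on $U$ and a metric on $K^{-1/2}\oplus K^{1/2}$ solving the vortex equation. The latter is the rank-two (Hitchin) case: with $h_\Sigma$ of curvature $-1$, the metric $h_0^{-1}\oplus h_0$ works up to the fixed normalization. For this solution $\deg L_\psi=1-g$ and $\beta=0$, so Lemma~\ref{lem:degree-identity-spinor} gives $f=4\pi(g-1)$. Alternatively, once any solution exists, the equality case of that lemma forces the value.

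\emph{Identification and fixed points.} The map $U\mapsto(K^{-1/2}\oplus K^{1/2}\oplus U,1)$ identifies the maximum locus with $\cU(\Sp_{2n-2}(\C))$. It is injective because $U$ is recovered as $L_\psi^\perp/L_\psi$. Finally, every point of the maximum locus is fixed: the automorphism acting by $\lambda$ on $K^{-1/2}$, by $\lambda^{-1}$ on $K^{1/2}$, and trivially on $U$ is symplectic. It carries $(V,1)$ to $(V,\lambda\cdot 1)$, so the pair is fixed by the $\C^\times$-action; this matches Proposition~\ref{prop:fixed-points-spinor} with $V_1=K^{-1/2}$ and $V_0=U$.
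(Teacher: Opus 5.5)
Your proof is correct and follows essentially the same route as the paper: the equality case of Remark~\ref{rmk:equality-spinor-bound} to split off $K^{-1/2}\oplus K^{1/2}$, Example~\ref{ex:maximal-spinor} for the converse, and the automorphism with weights $(1,0,-1)$ on $K^{-1/2},U,K^{1/2}$ for the fixed-point claim. The paper only asserts the complementary summand and the polystability of $U=L_\psi^\perp/L_\psi$, whereas you justify them via the parallel subbundle $j(L_\psi)$ and the restricted equation $\Lambda F_{A_U}=0$; likewise, your explicit vortex metric $h_0^{-1}\oplus h_0\oplus h_U$ makes $\beta=0$ checkable in the converse, while your ``any solution'' alternative would need that vanishing established for the chosen metric.
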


\begin{proof}
By Remark~\ref{rmk:equality-spinor-bound}, equality
\[
f(V,\psi)=4\pi(g-1)
\]
holds if and only if
\[
L_\psi\cong K^{-1/2}
\qquad\text{and}\qquad
\beta=0.
\]
The first condition says that the spinor is nowhere vanishing and identifies
$K^{-1/2}$ with $L_\psi$. The second says that $L_\psi\subset V$ is a
holomorphic direct summand. Since $V$ is symplectic and $L_\psi$ is isotropic,
the symplectic dual of this line summand is $L_\psi^{-1}\simeq K^{1/2}$, and the
orthogonal quotient
\[
U:=L_\psi^\perp/L_\psi
\]
is a polystable symplectic bundle. Hence
\[
(V,\psi)\cong (K^{-1/2}\oplus K^{1/2}\oplus U,1).
\]
This proves $(i)\Rightarrow(ii)$.

Conversely, every pair of the form in $(ii)$ is polystable by
Example~\ref{ex:maximal-spinor}. Moreover, the spinor $1$ is nowhere
vanishing, so $L_\psi\cong K^{-1/2}$, and the direct-sum decomposition shows
that the second fundamental form of $L_\psi$ vanishes. Hence, again by
Remark~\ref{rmk:equality-spinor-bound},
\[
f(V,\psi)=4\pi(g-1).
\]
This proves $(ii)\Rightarrow(i)$.

The identification of the maximum locus with $\cU(\Sp_{2n-2}(\C))$ is then
immediate. The maximum locus is fixed by the circle action because $f$ is a
moment map for the circle action on the smooth locus and is constant on this
component; equivalently, for the displayed representative the automorphisms
acting with weights $1,0,-1$ on $K^{-1/2},U,K^{1/2}$ identify
$(V,\psi)$ with $(V,e^{i\theta}\psi)$.
\end{proof}

Let
\[
Z\subset \cM(\Sp_{2n}(\C))
\]
be the locus represented by Higgs bundles of the following form. The underlying
symplectic bundle is
\[
V=K^{-1/2}\oplus U\oplus K^{1/2},
\qquad
U\in \cU(\Sp_{2n-2}(\C)),
\]
where the symplectic form is the natural pairing between $K^{-1/2}$ and
$K^{1/2}$, together with the symplectic form on $U$. The Higgs field
$\Phi_0$ has only one nonzero component, namely
\[
K^{1/2}\to K^{-1/2}\otimes K=K^{1/2},
\]
and this component is the identity.

The point $(V,\omega,\Phi_0)$ is fixed by the Higgs-field scaling action. Indeed, the decomposition
\[
V=K^{-1/2}\oplus U\oplus K^{1/2}
\]
defines a one-parameter family of symplectic automorphisms $g_a$, $a\in\C^\times$, acting with weights
\[
1,\quad 0,\quad -1
\]
on the three summands respectively. Since the only nonzero component of $\Phi_0$ maps
\[
K^{1/2}\to K^{-1/2}\otimes K,
\]
it has weight $2$ with respect to this action. Hence
\[
g_a\Phi_0 g_a^{-1}=a^2\Phi_0.
\]
Therefore, for every $\lambda\in\C^\times$, choosing $a$ with $a^2=\lambda$ gives an isomorphism
\[
(V,\omega,\Phi_0)\simeq (V,\omega,\lambda\Phi_0).
\]
Thus $Z\subset \cM(\Sp_{2n}(\C))^{\C^\times}$.

The morphism
\[
\iota:\cP\to \cM(\Sp_{2n}(\C)),
\qquad
(V,\psi)\mapsto (V,\psi\otimes\psi),
\]
is equivariant for the squaring homomorphism
\[
\C^\times\to\C^\times,\qquad t\mapsto t^2,
\]
where $\C^\times$ acts on $\cP$ by scaling the spinor and on
$\cM(\Sp_{2n}(\C))$ by scaling the Higgs field. Hence $\iota$ sends fixed
points of $\cP$ to fixed points of $\cM(\Sp_{2n}(\C))$. Under $\iota$, the maximum locus of Proposition~\ref{prop:maximal-fixed-locus}
maps precisely to $Z$.

\begin{lem}
\label{lem:Z-fixed-component}
The locus $Z$ is a connected component of
\[
\cM(\Sp_{2n}(\C))^{\C^\times}.
\]
Moreover, $Z$ is naturally identified with $\cU(\Sp_{2n-2}(\C))$.
\end{lem}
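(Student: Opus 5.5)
The plan is to realize $Z$ as the image of a morphism from $\cU(\Sp_{2n-2}(\C))$ into the fixed locus, show that this morphism is injective, and then show that its image is both open and closed in $\cM(\Sp_{2n}(\C))^{\C^\times}$. Concretely, consider
\[
\Xi:\cU(\Sp_{2n-2}(\C))\to \cM(\Sp_{2n}(\C)),\qquad U\mapsto (K^{-1/2}\oplus U\oplus K^{1/2},\omega,\Phi_0).
\]
It is convenient to factor it as $\iota\circ s$, where $s(U)=(K^{-1/2}\oplus K^{1/2}\oplus U,1)\in\cP$ is the identification of Proposition~\ref{prop:maximal-fixed-locus}.

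\emph{Well-definedness and stability.} By Example~\ref{ex:maximal-spinor}, each such pair is polystable, so by Proposition~\ref{prop:spinor-stability}(i) the Higgs bundle is polystable. The map $\Xi$ is algebraic because it is induced by a family construction on the stack of symplectic bundles.

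\emph{Injectivity.} This follows from Lemma~\ref{lem:iota-closed-points} together with the injectivity of $s$. To see the latter, note that from $(V,\psi)$ with $\psi$ nowhere vanishing one recovers $L_\psi=K^{-1/2}$, hence $U\cong L_\psi^\perp/L_\psi$; graded-isomorphism (S-equivalence) issues are harmless because $U$ is polystable. Since $\cU(\Sp_{2n-2}(\C))$ is connected and projective, the image $Z$ is connected and closed. Moreover $\Xi$ is a bijective morphism onto $Z$ which is normal up to normalisation. For the identification I would establish bijectivity and then argue that $Z$ is reduced with normal image by exhibiting an inverse on the universal family: the line subbundle $\ker\Phi^{\perp}$ recovers $K^{-1/2}$ as the image of $\Phi$ twisted by $K^{-1}$. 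This makes the identification "natural".

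\emph{Openness in the fixed locus.} This is the main point. Take a $\C^\times$-fixed Higgs bundle $(V',\Phi')$ near a point of $Z$. By the standard description of fixed points (Simpson, and García-Prada--Gothen--Mundet for $\Sp_{2n}$), it is a holomorphic chain
\[
V'=\bigoplus_\ell V'_\ell,\qquad \Phi':V'_\ell\to V'_{\ell+1}\otimes K,
\]
with weights symmetric under the symplectic duality. The discrete data are the ranks and degrees of the $V'_\ell$ together with the ranks of the components of $\Phi'$, and these are locally constant on connected components of the fixed locus: the grading is determined by the weight decomposition of the $\C^\times$-action on fibres, and degrees are topological.

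For points of $Z$ the data are: weights $1,0,-1$ with ranks $1,2n-2,1$ and degrees $1-g,0,g-1$, and $\Phi'$ restricted to $V'_{-1}\to V'_1\otimes K$ is a map between line bundles of degree $g-1$ on both sides, with all other components zero. On the connected component $C$ containing $Z$, every point therefore carries the same type. A nonzero map $V'_{-1}\to V'_1\otimes K$ between line bundles of equal degree is an isomorphism. Here I have to check that $\Phi'$ cannot vanish: stability forbids this, since otherwise $V'_1$ would be an invariant isotropic subbundle of positive degree $g-1$. Hence $V'_{-1}\cong V'_1\otimes K$. Symplectic duality gives $V'_{-1}\cong (V'_1)^{*}$, so $(V'_1)^2\cong K^{-1}$.

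\emph{The expected obstacle.} From $(V'_1)^2\cong K^{-1}$ I only get $V'_1=K^{-1/2}\otimes\alpha$ with $\alpha$ a $2$-torsion line bundle. Since $\alpha$ is discrete it is locally constant in families, so on $C$ it equals the trivial value found on $Z$. Thus $V'\cong K^{-1/2}\oplus V'_0\oplus K^{1/2}$, with $V'_0$ symplectic and, by polystability, polystable, and rescaling $\Phi'$ to the identity via the weight action puts the point in $Z$. Hence $C=Z$. The hardest step is making rigorous the local constancy of the chain type, including the class $\alpha$ and the limit behaviour at strictly polystable points. For this I would use that the fixed locus is a union of components indexed by these discrete invariants, exactly as in the Higgs-bundle literature.
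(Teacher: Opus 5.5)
Your route is the paper's: describe fixed points as graded (Hodge/chain) objects, treat the grading type as locally constant on the fixed locus, conclude that the type of $Z$ cuts out an open and closed subset, and identify that subset with $\cU(\Sp_{2n-2}(\C))$. On one point you are more careful than the paper. Its ``Hodge type'' silently fixes the line summands to be $K^{\mp1/2}$. You note that ranks and degrees only give $(V'_1)^2\cong K^{-1}$, and you use local constancy of the square root to separate $Z$ from its translates $t_\alpha(Z)$. These translates are indeed disjoint from $Z$, because $\mathrm{im}(\Phi)\otimes K^{-1}$ is recovered from the Higgs bundle.

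The step you single out as hardest is a genuine gap, and it cannot be filled. Both your argument and the paper's break at exactly this point. The grading of a fixed point comes from a lift of the $\C^\times$-action to automorphisms, and this lift is unique only up to $\Aut(V,\Phi)$. Where $U$ is strictly polystable, this group contains a torus, several gradings exist, and ``type'' is not a function on the fixed locus.

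Concretely, take $n=2$ and $U=M\oplus M^{-1}$, with $M\in\Pic^0(\Sigma)$ generic: $M^2\not\cong\mathcal{O}_\Sigma$ and $h^0(K^{1/2}M^{\pm1})=0$. Let $z\in Z$ be the corresponding point.
\begin{itemize}
\item The point $z$ can be regraded with $W=K^{-1/2}\oplus M$ in weight $1$ and $W^*$ in weight $-1$, at the same Morse level.
\item One has $\Aut(z)=\{\pm1\}\times T$, where $T\cong\C^\times$ scales $M$.
\item In $\K^1(\Sigma,C^\bullet(z))$, consider the summands $H^1(\Sigma,K^{-1/2}M^{-1})$ (non-split extensions $0\to K^{-1/2}\to W\to M\to0$) and $H^0(\Sigma,M^2K)$. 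They have weights $-1$ and $+2$, both for $T$ and for the Higgs action through the lift with weights $1,0,-1$ on $K^{-1/2},U,K^{1/2}$.
\item The quadratic obstruction is the $T$-moment map with values in $\K^2\cong\mathrm{Lie}(T)^*$. It vanishes on $x=e+\gamma$, because the weights $-1$ and $+2$ are never opposite.
\item For $e,\gamma\neq0$, the $T$-orbit of $x$ is closed with trivial stabiliser.
\end{itemize}
Hence $[x]$, in the Kuranishi model at $z$, is a stable Higgs bundle. It is $\C^\times$-fixed, since the Higgs action on $x$ coincides with the $T$-action. Its Higgs field has rank two, so it is not in $Z$. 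It tends to $z$ as $x\to0$. Therefore the connected component of $\cM(\Sp_4(\C))^{\C^\times}$ through $Z$ strictly contains $Z$; for $n>2$, add a stable summand to $U$.

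What your argument, and the paper's, actually proves is weaker. $Z$ is closed, connected and irreducible, and disjoint from the $t_\alpha(Z)$. It is also open in the fixed locus near points with $U$ stable, where automorphisms are finite and the grading is unique. So $Z$ is an irreducible component of the fixed locus, not a connected one, and the statement needs weakening accordingly.

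Two minor slips. When $\Phi'=0$, the destabilising positive-degree line is $V'_{-1}$, not $V'_1$. In your normalisation, $\Phi'$ raises weight by $2$, not $1$.
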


\begin{proof}
We use the fixed-point description of the Higgs-field scaling action on
$G$-Higgs moduli spaces in terms of $\mathbb Z$-gradings of the Lie algebra:
a fixed point is represented by a reduction to the degree-zero group of such a
grading, with Higgs field in one nonzero graded piece; see
\cite[Proposition 3.13]{GonzalezVeryStableRegular}.

For $G=\Sp_{2n}(\C)$, the standard representation translates this into a
weight decomposition of the underlying symplectic vector bundle. The symplectic
form pairs opposite weights, and the weight-zero summand is symplectic. The
locus $Z$ is precisely the fixed locus of the Hodge type with three summands
\[
K^{-1/2},\qquad U,\qquad K^{1/2},
\]
where $U$ is symplectic of rank $2n-2$, and with unique nonzero Higgs
component
\[
K^{1/2}\to K^{-1/2}\otimes K
\]
equal to an isomorphism.

The ranks, degrees and weights of the summands are discrete invariants in a
family of fixed Higgs bundles, hence they are locally constant. Therefore this
Hodge type determines an open and closed sublocus of the fixed-point locus. For
this Hodge type, the nonzero Higgs component is an isomorphism
$K^{1/2}\to K^{1/2}$, hence, after rescaling the two line summands, it is the
identity. The remaining datum is exactly the symplectic bundle $U$. Thus this
fixed sublocus is naturally identified with $\cU(\Sp_{2n-2}(\C))$, which is
connected. Hence $Z$ is a connected component.
\end{proof}

Let
\[
Z^-:=
\left\{
(V,\omega,\Phi)\in\cM(\Sp_{2n}(\C))
\st 
\lim_{\lambda\to\infty}(V,\omega,\lambda\Phi)\in Z
\right\}.
\]
The set $Z^-$ is the Bia\l ynicki--Birula downward stratum associated with
the fixed component $Z$. Since the Hitchin map is $\C^\times$-equivariant
with strictly positive weights on the Hitchin base, any point whose
$\lambda\to\infty$ limit exists must lie in the nilpotent cone. Hence
\[
Z^-\subset \Nilp(\Sp_{2n}(\C)).
\]

We shall use the Bia\l ynicki--Birula theory in the semiprojective form relevant
to Higgs moduli. The general decomposition originates in
\cite{BialynickiBirulaActions}; for the semiprojective framework and its use
for $G$-Higgs moduli, see e.g. \cite[Section~2]{hausel_hitchin_very_stable} and the references therein.
The description of downward flows for $G$-Higgs bundles in terms of
parabolic reductions and associated graded objects is recalled in
\cite[Proposition~3.26]{GonzalezVeryStableRegular}. The fact that the global
nilpotent cone is equidimensional of dimension
\[
\dim\Sp_{2n}(\C)\, (g-1)
\]
is due to Laumon \cite{LaumonNilpotentCone}.

We will prove below that $\overline{Z^-}$ is an irreducible component of $\Nilp(\Sp_{2n}(\C))$ and we have an identification 
\[
\cG=\overline{Z^-}.
\]

\subsection{Smoothing the zero divisor of the spinor}

Let $(V,\omega,\psi)$ be a stable Gaiotto pair with $\psi\neq0$. Recall
from Lemma~\ref{lem:Lpsi-degree} that $L_\psi\subset V$ denotes the saturation of the line subsheaf generated by the image of the morphism
\[
\psi:K^{-1/2}\to V.
\]
Equivalently, $L_\psi$ is the smallest line subbundle of $V$ through which
$\psi$ factors. There is a unique effective divisor $D_\psi$ such that
\[
L_\psi\simeq K^{-1/2}(D_\psi).
\]
We call $D_\psi$ the zero divisor of $\psi$. Thus $D_\psi=0$ if and only
if $K^{-1/2}\to V$ is a subbundle inclusion.

\begin{lem}[Hecke smoothing of a spinor zero]
\label{lem:hecke-smoothing}
Let $(V,\omega,\psi)$ be a stable Gaiotto pair on $\Sigma$, and let
$p\in \supp(D_\psi)$. Then the associated Higgs bundle
\[
(V,\omega,\psi\otimes\psi)
\]
lies in the Zariski closure of the locus of stable Gaiotto Higgs bundles represented
by spinors which do not vanish at $p$.

Consequently, every stable Gaiotto Higgs bundle lies in the Zariski closure of the
stable locus represented by nowhere-vanishing spinors.
\end{lem}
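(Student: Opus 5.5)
The plan is to realize $(V,\omega,\psi)$ as the special member of an algebraic one-parameter family of Gaiotto pairs $(V_t,\omega_t,\psi_t)$, $t\in T$ with $T$ a pointed smooth curve, built by a local Beauville--Laszlo modification at $p$. Concretely, I would pick a local coordinate $z$ at $p$, a trivialization of $K^{1/2}$ near $p$, and a local symplectic frame $e_1,f_1,\dots,e_n,f_n$ of $V$ near $p$ with $L_\psi=\langle e_1\rangle$. In this frame $\psi=z^k e_1$, where $k=\operatorname{mult}_p D_\psi\geq1$.

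By Beauville--Laszlo, giving $V_t$ is the same as giving a self-dual lattice $\Lambda_t\subset V\otimes\C((z))$. Away from $p$ we keep $V$ and $\omega$ unchanged and glue with $\Lambda_t$. The spinor $\psi$, which is a section of $V\otimes K^{1/2}$ over $\Sigma\setminus\{p\}$, extends to a section $\psi_t$ of $V_t\otimes K^{1/2}$ exactly when $\psi\in\Lambda_t$. It does not vanish at $p$ exactly when $\psi\notin z\Lambda_t$. The construction never touches $\Sigma\setminus\{p\}$, so the zeros of $\psi$ away from $p$ are unchanged.

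The core step, and the one I expect to be the main obstacle, is to produce an algebraic family $t\mapsto\Lambda_t$ in the affine Grassmannian $\mathrm{Gr}_{\Sp_{2n}}$ with three properties:
\begin{enumerate}[(a)]
\item $\Lambda_0=\Lambda_{\mathrm{std}}$, the standard lattice spanned by the frame;
\item $\psi\in\Lambda_t$ for all $t$;
\item $\psi\notin z\Lambda_t$ for $t\neq0$.
\end{enumerate}
The Hecke-transformed lattice $\Lambda'=\langle z^{k}e_1,z^{-k}f_1,e_2,\dots,f_n\rangle$ already satisfies (b) and (c). It lies in the $\Sp_{2n}(\C[[z]])$-orbit $\mathrm{Gr}^{\lambda}$ of the coweight $\lambda=k\varepsilon_1$, and the trivial point $\Lambda_{\mathrm{std}}=\mathrm{Gr}^0$ lies in $\overline{\mathrm{Gr}^\lambda}$. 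So the real issue is whether $\Lambda_{\mathrm{std}}$ is in the closure of the locally closed subset $\{\Lambda\in\mathrm{Gr}^\lambda:\ \psi\in\Lambda,\ \psi\notin z\Lambda\}$.

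I would first try to settle this inside the $\SL_2\cong\Sp(\langle e_1,f_1\rangle)$ factor, with all other basis vectors fixed. There I would write down explicit self-dual lattices of the form
\[
\Lambda_t=\bigl\langle\, z^{k}e_1+t\,c(z)f_1,\ \ z^{-k}f_1+\cdots\,\bigr\rangle ,
\]
and more generally use conjugates $u_t\,\lambda(z)\,u_t^{-1}\Lambda_{\mathrm{std}}$ by unipotent loops $u_t$ that degenerate as $t\to0$. I would check by hand that (a)--(c) hold. If explicit formulas prove awkward, I would instead argue by dimension: inside $\overline{\mathrm{Gr}^\lambda}$ the condition $\psi\in\Lambda$ cuts out a closed subvariety containing $\Lambda_{\mathrm{std}}$, and in that subvariety the condition $\psi\in z\Lambda$ is a proper closed condition near $\Lambda_{\mathrm{std}}$.

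Granting such a family, the rest is formal. Beauville--Laszlo gluing works in families, so it produces a family $(V_t,\omega_t,\psi_t)$ over $T$. Hence $(V_t,\psi_t\otimes\psi_t)$ is an algebraic family of symplectic Higgs bundles whose $t=0$ member is the given stable Higgs bundle $(V,\omega,\psi\otimes\psi)$. Stability of Higgs bundles is open in families, so it holds for $t$ in a neighbourhood of $0$. By Proposition~\ref{prop:spinor-stability}, the pairs $(V_t,\psi_t)$ are then stable Gaiotto pairs with $\psi_t\neq0$, and for $t\neq0$ the spinor $\psi_t$ does not vanish at $p$. The classifying map $T\to\cM(\Sp_{2n}(\C))$ sends $0$ to $(V,\psi\otimes\psi)$. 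Thus this point lies in the Zariski closure of the image of $T\setminus\{0\}$, which is contained in the closure of the stable Gaiotto Higgs bundles whose spinor does not vanish at $p$. This is the first assertion.

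For the consequence I would argue by induction on $\deg D_\psi$. Each modification at one point $p$ keeps the zero divisor away from $p$ unchanged and lowers $\deg D_{\psi_t}$ for generic $t$. The stable locus with smaller zero divisor is contained in the closure of the nowhere-vanishing locus by induction, and Zariski closure is transitive. Together these give that every stable Gaiotto Higgs bundle lies in the closure of the stable locus represented by nowhere-vanishing spinors.
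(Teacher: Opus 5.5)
\textbf{There is a genuine gap.} Your overall reduction is the same as the paper's: a Beauville--Laszlo modification of $V$ at $p$ over $\A^1$, then openness of stability, then iteration over $\supp(D_\psi)$. Your induction on $\deg D_\psi$ for the second assertion is fine.

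The gap is the step you flag as the main obstacle. You never produce the family $\Lambda_t$, and neither of your two suggestions supplies it.
\begin{enumerate}
\item The explicit ansatz $\langle z^ke_1+t\,c(z)f_1,\ z^{-k}f_1+\cdots\rangle$ cannot satisfy (a). If these vectors specialize at $t=0$ to part of a basis of $\Lambda_0$, then $z^ke_1$ is primitive in $\Lambda_0$. But $z^ke_1\in z\Lambda_{\mathrm{std}}$, since $k\geq1$.
\item The dimension fallback is circular. $\Lambda_{\mathrm{std}}$ itself satisfies $\psi\in z\Lambda_{\mathrm{std}}$. So the claim that $\{\psi\in z\Lambda\}$ is a proper closed condition near $\Lambda_{\mathrm{std}}$ inside $\{\psi\in\Lambda\}$ just restates that $\Lambda_{\mathrm{std}}$ lies in the closure of $\{\psi\in\Lambda,\ \psi\notin z\Lambda\}$. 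You give no dimension count that would establish this.
\end{enumerate}
Without the family, the first assertion is not proved, and the induction has nothing to run on.

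The missing idea is a single symplectic unipotent loop with a pole at $p$ that tends to the identity as $t\to0$. Let $N$ be the endomorphism with $N(e_1)=f_1$ and $N=0$ on $f_1,e_2,\dots,f_n$. It is the rank-one element $\pm f_1\otimes f_1$ of $\Sym^2\cong\mathfrak{sp}_{2n}(\C)$, and $N^2=0$. Hence $h_t:=I+tz^{-k}N$ lies in $\Sp_{2n}(\C((z))[t])$, with inverse $I-tz^{-k}N$.

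Regluing $V$ by $h_t\circ\varphi$ over $\widehat D^\times\times\A^1$ amounts to taking
$\Lambda_t=h_t^{-1}\Lambda_{\mathrm{std}}=\langle e_1-tz^{-k}f_1,\ f_1,\ e_2,\dots,f_n\rangle$.
\begin{enumerate}
\item This lattice is self-dual and $\Lambda_0=\Lambda_{\mathrm{std}}$, giving (a).
\item It contains $\psi=z^ke_1=z^k(e_1-tz^{-k}f_1)+tf_1$, giving (b).
\item On the disc the spinor becomes $h_t(z^ke_1)=z^ke_1+tf_1$, whose value at $z=0$ is $tf_1\neq0$, giving (c).
\end{enumerate}
For $t\neq0$ this lattice lies in $\mathrm{Gr}^{\lambda}$. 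It is precisely the standard root-subgroup curve witnessing $\Lambda_{\mathrm{std}}\in\overline{\mathrm{Gr}^\lambda}$, which also satisfies (b) and (c). With this family inserted, the rest of your argument goes through as written and coincides with the paper's proof.
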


\begin{proof}
Let
\[
m=\operatorname{ord}_p(D_\psi)\ge 1.
\]
Fix a formal coordinate $z$ at $p$, and write
\[
\widehat D=\operatorname{Spec}\C[[z]],\qquad
\widehat D^\times=\operatorname{Spec}\C((z)),\qquad
U=\Sigma\setminus\{p\}.
\]
Trivialize $K^{1/2}$ on $\widehat D$. Then $\psi$ may be regarded, on
$\widehat D$, as a section of $V|_{\widehat D}$. Since
$m=\operatorname{ord}_p(D_\psi)$, the section $z^{-m}\psi$ is primitive in the
free $\C[[z]]$-module $V|_{\widehat D}$. A primitive vector in a symplectic
module over the local ring $\C[[z]]$ can be completed to a symplectic basis.
Hence we may choose a symplectic frame
\[
(e_1,f_1,\ldots,e_n,f_n)
\]
of $V|_{\widehat D}$ such that
\[
\psi=z^m e_1
\]
on $\widehat D$.

Define an endomorphism $N$ with respect to this symplectic frame by
\[
N(e_1)=f_1,\qquad N(f_1)=0,\qquad
N(e_i)=N(f_i)=0\quad (i\ge 2).
\]
Then $N\in\mathfrak{sp}_{2n}(\C)$ and $N^2=0$. Hence, for the coordinate
$t$ on $\A^1$,
\[
h_t:=I+t z^{-m}N\in \Sp_{2n}(\C((z))[t]),
\qquad
h_t^{-1}=I-t z^{-m}N.
\]

Let
\[
V_U:=V|_U,\qquad V_{\widehat D}:=V|_{\widehat D},
\]
and let
\[
\varphi:V_U|_{\widehat D^\times}
   \xrightarrow{\sim}
   V_{\widehat D}|_{\widehat D^\times}
\]
be the original gluing isomorphism defining $V$. Define a new gluing isomorphism
over $\widehat D^\times\times\A^1$ by
\[
\varphi_t:=h_t\circ \varphi.
\]
By Beauville--Laszlo descent in the curve form \cite{BeauvilleLaszlo}
(equivalently, in the loop-group formulation \cite[Section 5.2.2]{SorgerLectures}),
the data
\[
p_U^*V_U,\qquad p_{\widehat D}^*V_{\widehat D},\qquad \varphi_t
\]
glue to a vector bundle
\[
\cV\to \Sigma\times\A^1,
\]
where $p_U:U\times\A^1\to U$ and
$p_{\widehat D}:\widehat D\times\A^1\to\widehat D$ are the projections. Since
each $h_t$ is symplectic, the local symplectic forms glue to a relative symplectic
form
\[
\Omega:\cV\otimes\cV\to\cO_{\Sigma\times\A^1}.
\]
At $t=0$, since $h_0=I$, we recover
\[
(\cV_0,\Omega_0)\cong (V,\omega).
\]

We now glue the spinor. On $U\times\A^1$, set
\[
\Psi_U:=p_U^*(\psi|_U).
\]
On $\widehat D\times\A^1$, using the chosen trivialization of $K^{1/2}$, set
\[
\Psi_{\widehat D}:=z^m e_1+t f_1.
\]
This is regular on $\widehat D\times\A^1$. On
$\widehat D^\times\times\A^1$, the original section satisfies
\[
\varphi(\Psi_U)=z^m e_1.
\]
Therefore
\[
\varphi_t(\Psi_U)
=
h_t\bigl(\varphi(\Psi_U)\bigr)
=
h_t(z^m e_1)
=
z^m e_1+t f_1
=
\Psi_{\widehat D}.
\]
Thus the two local sections glue to a global section
\[
\Psi\in H^0\!\bigl(\Sigma\times\A^1,\cV\otimes p_\Sigma^*K^{1/2}\bigr),
\]
where $p_\Sigma:\Sigma\times\A^1\to\Sigma$ is the projection.

Using the relative symplectic form $\Omega$, and the induced identification
\[
\Ad(\cV)\cong \Sym^2(\cV),
\]
define
\[
\Phi_\Psi:=\Psi\otimes\Psi
\in H^0\!\bigl(\Sigma\times\A^1,\Ad(\cV)\otimes p_\Sigma^*K\bigr).
\]
Equivalently, on each fibre,
\[
\Phi_t(v)=\Omega_t(\Psi_t,v)\Psi_t.
\]
Thus each fibre $(\cV_t,\Omega_t,\Phi_t)$ is a Gaiotto Higgs bundle.

For $t=0$, we recover the original Higgs bundle. For $t\neq0$,
\[
\Psi_{\widehat D}|_{z=0}=t f_1\neq0,
\]
so $\Psi_t$ does not vanish at $p$. Stability is open in algebraic families.
Since the central fibre is stable, the stable locus in the base $\A^1$ is a Zariski
open neighbourhood of $0$. Hence the image of the nonzero stable fibres has the
central fibre in its Zariski closure. This proves the first assertion.

For the final assertion, apply the preceding construction successively at the
finitely many points in the support of $D_\psi$. If zeros have already been removed at
$p_1,\ldots,p_r$, and $q$ is another point of the remaining zero divisor, then the
construction at $q$ leaves the spinor unchanged on $\Sigma\setminus\{q\}$. Hence
it remains nonzero at $p_1,\ldots,p_r$. Iterating over these points and using transitivity of Zariski closure gives the
claim.
\end{proof}

Let 
\[
\cG^{\mathrm{nv}}\subset\cG
\]
denote the locus represented by stable
Gaiotto pairs $(V,\omega,\psi)$ for which $K^{-1/2}\to V$ is a subbundle
inclusion.

\begin{prop}
\label{prop:G-closure-nv}
One has
\begin{equation}
    \cG=\overline{\cG^{\mathrm{nv}}}.
\label{eq:G-closure-nv}
\end{equation}
\end{prop}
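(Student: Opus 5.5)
The equality \eqref{eq:G-closure-nv} will follow from two inclusions; the substantive one is a direct application of Lemma~\ref{lem:hecke-smoothing}.

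\emph{The inclusion $\overline{\cG^{\mathrm{nv}}}\subseteq\cG$.} By definition, $\cG^{\mathrm{nv}}$ consists of Higgs bundles $(V,\omega,\psi\otimes\psi)$ coming from stable Gaiotto pairs with $\psi\neq0$. By Proposition~\ref{prop:spinor-stability}(i) these are stable Higgs bundles of the form $\mu(\psi)$ with $\psi\neq0$. Hence $\cG^{\mathrm{nv}}$ lies in the stable locus whose Zariski closure is $\cG$. Since $\cG$ is closed, this inclusion follows.

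\emph{The inclusion $\cG\subseteq\overline{\cG^{\mathrm{nv}}}$.} By Remark~\ref{rmk:theta-intersection},
\[
\cG=\overline{\iota(\cP^\times_{\mathrm{st}})}.
\]
It therefore suffices to show that $\iota(\cP^\times_{\mathrm{st}})\subseteq\overline{\cG^{\mathrm{nv}}}$. Taking closures then gives the claim.

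Let $(V,\omega,\psi)$ be a stable pair with $\psi\neq0$. If $D_\psi=0$, its image already lies in $\cG^{\mathrm{nv}}$. Otherwise $D_\psi$ has finite support $\{p_1,\dots,p_r\}$, and we apply the final assertion of Lemma~\ref{lem:hecke-smoothing}. It places $(V,\omega,\psi\otimes\psi)$ in the Zariski closure of the stable locus represented by spinors that vanish nowhere.

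One point needs checking: ``nowhere vanishing'' must match the definition of $\cG^{\mathrm{nv}}$, namely that $K^{-1/2}\to V$ is a subbundle inclusion. This holds because a morphism of a line bundle into a vector bundle is a subbundle inclusion exactly when it is fibrewise nonzero. Equivalently $D_\psi=0$, in the sense of Lemma~\ref{lem:Lpsi-degree}(i).

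The main obstacle is the iteration inside Lemma~\ref{lem:hecke-smoothing}. At each step we must remain inside the stable locus, and new zeros must not be created at unrelated points. Each Hecke deformation is an $\A^1$-family with stable central fibre, so by openness of stability a Zariski-open neighbourhood of $t=0$ consists of stable Gaiotto pairs. A second issue is that the generic fibre $\Psi_t$ might acquire zeros away from $p$. The lemma deals with this only by noting that $\Psi_t$ is unchanged off $p$, so its zero set away from $p$ is exactly that of $\psi$. The remaining zeros are then among the finitely many original ones, and the number of points in the support strictly decreases at each step.

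Transitivity of Zariski closure then gives the result. Indeed, if $x\in\overline{A}$ and $A\subseteq\overline{B}$, then $x\in\overline{B}$. Applying this $r$ times yields $(V,\omega,\psi\otimes\psi)\in\overline{\cG^{\mathrm{nv}}}$, which completes the proof.
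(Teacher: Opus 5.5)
Your proof is correct and follows the paper's argument. The inclusion $\overline{\cG^{\mathrm{nv}}}\subseteq\cG$ comes from $\cG$ being closed, and the reverse inclusion comes from the final assertion of Lemma~\ref{lem:hecke-smoothing} together with transitivity of Zariski closure; your extra checks (that ``nowhere vanishing'' means a subbundle inclusion, and that the iteration stays stable without creating new zeros away from $p$) only unpack what that lemma's proof already contains.
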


\begin{proof}
By definition, $\cG$ is the Zariski closure of the stable Gaiotto locus.
Lemma~\ref{lem:hecke-smoothing} shows that this stable locus is contained in
$\overline{\cG^{\mathrm{nv}}}$. Since $\cG^{\mathrm{nv}}\subset \cG$ and $\cG$ is
closed, the equality follows.
\end{proof}

\subsection{Reconstruction from a nowhere-vanishing spinor}
\label{subsec:nowhere-vanishing-reconstruction}

Set
\[
L:=K^{-1/2}.
\]
Let $(V,\omega,\psi)$ be a stable Gaiotto pair with nowhere-vanishing spinor.
Then $\psi$ identifies $L$ with a line subbundle of $V$. Let
\[
\omega^\flat:V\to V^*
\]
be the isomorphism induced by the symplectic form. Dualizing the inclusion
$L\hookrightarrow V$ gives a quotient $V^*\to L^{-1}$, and hence a morphism
\[
V\xrightarrow{\ \omega^\flat\ }V^*\longrightarrow L^{-1}.
\]
Its kernel is $L^\perp$. We write
\[
E:=L^\perp.
\]
Thus a nowhere-vanishing Gaiotto pair determines a filtration
\[
0\subset L\subset E\subset V
\]
with
\[
E=L^\perp,\qquad V/E\cong L^{-1}.
\]
The quotient
\[
U:=E/L
\]
inherits a symplectic form, denoted by $\theta$.

The first extension is
\begin{equation}
    0\to L\to E\to U\to0,
\label{eq:first-extension}
\end{equation}
with class
\[
\delta\in\Ext^1(U,L)\cong H^1(\Sigma,U^*\otimes L).
\]
Using the induced isomorphism
\[
\theta^\flat:U\to U^*,
\]
we define the $\theta$-dual of $\delta$ to be the class
\[
\delta^\theta\in H^1(\Sigma,U\otimes L)
\cong
\Ext^1(L^{-1},U)
\]
obtained from $\delta$ by the identification
\[
U^*\otimes L
\xrightarrow{\ (\theta^\flat)^{-1}\otimes \operatorname{id}_L\ }
U\otimes L.
\]

The second extension is
\begin{equation}
    0\to E\to V\to L^{-1}\to0,
\label{eq:second-extension}
\end{equation}
with class
\[
\widetilde\delta\in\Ext^1(L^{-1},E)
\cong
H^1(\Sigma,E\otimes L).
\]
Composing with the quotient $E\to U$ gives a natural map
\[
\Ext^1(L^{-1},E)\to \Ext^1(L^{-1},U).
\]
The symplectic compatibility condition is that the image of
$\widetilde\delta$ under this map be $\delta^\theta$. The next lemma proves
that this condition is exactly what is needed to reconstruct the symplectic
form on $V$.

\begin{lem}[Compatible extensions and symplectic reconstruction]
\label{lem:symplectic-reconstruction}
Let $L=K^{-1/2}$, let $(U,\theta)$ be a symplectic bundle of rank
$2n-2$, and let
\[
0\to L\to E\to U\to0
\]
be an extension with class $\delta\in\Ext^1(U,L)$. Let
\[
\delta^\theta\in\Ext^1(L^{-1},U)
\]
be its $\theta$-dual. Let
\[
0\to E\to V\to L^{-1}\to0
\]
be a second extension, with class
\[
\widetilde\delta\in\Ext^1(L^{-1},E).
\]
Then $V$ carries a symplectic form $\omega$ such that $L\subset V$, $L^\perp=E$, and the induced symplectic form on $E/L\cong U$
is $\theta$, if and only if the image of $\widetilde\delta$ in
\[
\Ext^1(L^{-1},U)
\]
is $\delta^\theta$.
\end{lem}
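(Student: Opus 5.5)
The plan is to work with local frames and transition functions, or equivalently with Čech cocycles, and to treat both directions of the equivalence through one construction. Fix an open cover $\{U_i\}$ on which $L$, $U$ and both extensions split. Then $E|_{U_i}\cong L\oplus U$ with transition functions
\[
\begin{pmatrix}1 & a_{ij}\\ 0 & g_{ij}\end{pmatrix},
\]
where $g_{ij}$ are the ($\theta$-symplectic) transition functions of $U$ and $\{a_{ij}\}$ represents $\delta$, after untwisting by the transition functions of $L$. Likewise $V|_{U_i}\cong L\oplus U\oplus L^{-1}$ with block upper-triangular transition functions
\[
T_{ij}=\begin{pmatrix}1 & a_{ij} & c_{ij}\\ 0 & g_{ij} & b_{ij}\\ 0&0&1\end{pmatrix}.
\]
Here $(c_{ij},b_{ij})$ represents $\widetilde\delta\in H^1(E\otimes L)$, and $\{b_{ij}\}$ represents its image in $H^1(U\otimes L)=\Ext^1(L^{-1},U)$. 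Locally we use the standard symplectic form $\omega_0$ on $L\oplus U\oplus L^{-1}$, given by the duality pairing on $L\oplus L^{-1}$ plus $\theta$. The existence of a symplectic $\omega$ with the required properties then amounts to finding local symplectic forms, compatible with the flag, that glue.

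For ``only if'': given $\omega$, the conditions $L$ isotropic, $L^\perp=E$, and induced form $\theta$ on $E/L$ force, in any splitting adapted to the flag, a local form
\[
\omega_i=\omega_0+(\text{corrections}),
\]
where the corrections pair $L^{-1}$ with $U$ and with $L^{-1}$. After a local unipotent change of frame preserving the flag, we may take $\omega_i=\omega_0$. The structure group then reduces to the parabolic $Q\subset\Sp_{2n}$ stabilizing the isotropic line, so each $T_{ij}$ lies in $Q$. Writing out $T_{ij}^t\omega_0T_{ij}=\omega_0$ gives
\[
b_{ij}=\pm g_{ij}\,\theta^{-1}a_{ij}^t,
\]
up to the fixed sign convention, together with a condition $c_{ij}$-symmetric $=\tfrac12\,\theta(b_{ij},b_{ij})$-type. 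The first identity says exactly that $[b]=\delta^\theta$.

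For ``if'': suppose $[b]=\delta^\theta$. Choose the splittings of $E$ as above. Since $\{b_{ij}\}$ and the cocycle $\{\theta^{-1}a^t\}$ define the same class, we can change the local splittings of $V\to L^{-1}$ by a $0$-cochain in $U\otimes L$ and arrange that $b_{ij}$ equals the compatible expression on the nose. This changes $c_{ij}$ but not the isomorphism class of $V$. The $Q$-cocycle condition is then satisfied in every entry except possibly the corner entry, where the remaining condition is that the symmetrized corner term $c_{ij}+\epsilon\,\theta(\cdot,\cdot)$ is prescribed. Since $\Hom(L^{-1},L)=L^2$ and skew forms on a line vanish, the corner entry of $T^t\omega_0T-\omega_0$ is automatically zero: the entry pairs $L^{-1}$ with itself, and a skew form on a line bundle is zero.

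Hence the $T_{ij}$ lie in $Q\subset\Sp_{2n}$, so $\omega_0$ glues to a global symplectic form on $V$. By construction this form has $L$ isotropic, $L^\perp=E$, and restriction $\theta$ on $U$.

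The main obstacle is the bookkeeping in the ``if'' direction: showing that the remaining cocycle identities (in particular the $(U,L^{-1})$ entry, which involves $g_{ij}b_{jk}+b_{ij}$ versus $b_{ik}$) stay consistent after modifying $b$ by the coboundary. I would handle this by checking that the modification is conjugation of $T_{ij}$ by local unipotent elements $\begin{pmatrix}1&*&*\\0&1&s_i\\0&0&1\end{pmatrix}$. Such conjugation preserves the cocycle condition automatically, and choosing these elements in $Q$ (i.e.\ with the matching $(L,U)$ entry $\propto s_i^t\theta$) keeps everything symplectic.
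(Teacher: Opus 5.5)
Your argument is essentially the paper's proof. It uses block upper-triangular transition functions for $V$ in local splittings $L\oplus U\oplus L^{-1}$ and the local standard form built from the duality pairing and $\theta$. The key observation is that the only non-automatic condition is that the $(U,L^{-1})$ block be the $\theta$-dual of the $(L,U)$ block. The $c_{ij}$ terms cancel and $\theta(b_{ij},b_{ij})=0$ because $L$ has rank one, so there is in fact no condition on $c_{ij}$ at all. The ``if'' direction is a coboundary change of the lift of $L^{-1}$, and ``only if'' is local normalization followed by reading off that block. Your explicit normalization of $\omega$ by a flag-preserving unipotent change of frame spells out a step the paper only asserts.

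One correction to your last paragraph: the frame changes in the ``if'' direction must \emph{not} be taken in $Q$. If $P_i,P_j$ preserve $\omega_0$ and $T'_{ij}=P_i^{-1}T_{ij}P_j$, then
\[
(T'_{ij})^{t}\,\omega_0\,T'_{ij}-\omega_0=P_j^{t}\bigl(T_{ij}^{t}\,\omega_0\,T_{ij}-\omega_0\bigr)P_j,
\]
so a non-symplectic cocycle stays non-symplectic. Concretely, membership in $Q$ forces an $(L,U)$ entry $r_i\propto s_i^t\theta$. This shifts $a_{ij}$ by $r_j-r_ig_{ij}$, which exactly cancels the coboundary shift $g_{ij}s_j-s_i$ of $b_{ij}$ in the defect $b_{ij}\mp g_{ij}\theta^{-1}a_{ij}^t$. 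What works is what you wrote one paragraph earlier: take $P_i$ with only the $(U,L^{-1})$ entry $s_i$, i.e.\ a new local lift of $L^{-1}$. This $P_i$ is not symplectic, leaves $a_{ij}$ unchanged, and moves $b_{ij}$ by the coboundary of $\{s_i\}$. The cocycle identities you were worried about hold automatically after any change of local frames, symplectic or not, so no further bookkeeping is needed.
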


\begin{proof}
Choose an open cover $\{W_i\}$ on which $L$, $U$, $E$, and $V$ are
trivial. Put $W_{ij}:=W_i\cap W_j$. Let
\[
l_{ij}:L_j\xrightarrow{\sim}L_i,
\qquad
u_{ij}:U_j\xrightarrow{\sim}U_i
\]
be the transition functions of $L$ and $U$, respectively. We choose the
trivializations of $U$ to be symplectic, so that
\[
u_{ij}\in \Sp_{2n-2}(\cO(W_{ij})).
\]

Choose local splittings of the first extension. With respect to the local
decomposition
\[
E_i\cong L_i\oplus U_i,
\]
the transition functions of $E$ have the form
\[
e_{ij}
=
\begin{pmatrix}
l_{ij} & d_{ij}\\
0 & u_{ij}
\end{pmatrix},
\]
where
\[
d_{ij}:U_j\to L_i
\]
is the upper-right transition block. The actual \v Cech representative of
\[
\delta\in H^1(\Sigma,U^*\otimes L)
\]
is obtained from this mixed block by writing it in one local trivialization.
Thus, in the $W_i$-trivialization, it is
\[
\delta_{ij}^{(i)}
:=
d_{ij}\circ u_{ij}^{-1}:U_i\to L_i,
\]
whereas in the $W_j$-trivialization it is
\[
\delta_{ij}^{(j)}
:=
l_{ij}^{-1}\circ d_{ij}:U_j\to L_j.
\]
These are the same section of $U^*\otimes L$ written in the two
trivializations.

We now describe the $\theta$-dual cocycle. Let
\[
d_{ij}^{\theta}:L_j^{-1}\to U_i
\]
be the mixed block characterized by
\begin{equation}
    \theta_i\bigl(d_{ij}^{\theta}(s),u_{ij}(u)\bigr)
=
\bigl\langle d_{ij}(u),\,l_{ij}^{-1}(s)\bigr\rangle_L
\label{eq:theta-dual-cocycle}
\end{equation}
for local sections $u$ of $U_j$ and $s$ of $L_j^{-1}$, where $\langle\, ,\,\rangle_L:L\otimes L^{-1}\to\cO_\Sigma$ denotes the natural
pairing. Equivalently, the actual representative of $\delta^\theta$ in the
$W_i$-trivialization is
\[
(\delta^\theta_{ij})^{(i)}
=
(\theta_i^\flat)^{-1}\circ
\bigl(\delta_{ij}^{(i)}\bigr)^\vee
=
d_{ij}^{\theta}\circ l_{ij}
:
L_i^{-1}\to U_i,
\]
where $\bigl(\delta_{ij}^{(i)}\bigr)^\vee$ denotes the transpose map. In the
$W_j$-trivialization, the same representative is
\[
(\delta^\theta_{ij})^{(j)}
=
u_{ij}^{-1}\circ d_{ij}^{\theta}
:
L_j^{-1}\to U_j.
\]
This is precisely the \v Cech realization of the identification $U^*\otimes L\cong U\otimes L$ defined by $\theta^\flat$.

Now choose local splittings of the second extension. With respect to the local
decomposition
\[
V_i\cong L_i\oplus U_i\oplus L_i^{-1},
\]
the transition functions of $V$ may be written as
\begin{equation}
    v_{ij}
=
\begin{pmatrix}
l_{ij} & d_{ij} & a_{ij}\\
0 & u_{ij} & \gamma_{ij}\\
0 & 0 & l_{ij}^{-1}
\end{pmatrix}.
\label{eq:V-transition-compatible}
\end{equation}
Here
\[
a_{ij}:L_j^{-1}\to L_i,
\qquad
\gamma_{ij}:L_j^{-1}\to U_i
\]
are the mixed blocks representing the second extension. More precisely, the
pair $(a_{ij},\gamma_{ij})$ represents
\[
\widetilde\delta\in H^1(\Sigma,E\otimes L)\cong\Ext^1(L^{-1},E),
\]
and its image in
\[
H^1(\Sigma,U\otimes L)\cong\Ext^1(L^{-1},U)
\]
is represented, in the $W_i$-trivialization, by
\[
\gamma_{ij}\circ l_{ij}:L_i^{-1}\to U_i.
\]

Assume first that the image of $\widetilde\delta$ in
$\Ext^1(L^{-1},U)$ is $\delta^\theta$. After changing the local splittings
of the second extension, we may assume that the mixed blocks satisfy
\[
\gamma_{ij}=d_{ij}^{\theta}.
\]

On each local decomposition
\[
V_i\cong L_i\oplus U_i\oplus L_i^{-1},
\]
define
\begin{equation}
    \omega_i\bigl((\ell,u,s),(\ell',u',s')\bigr)
=
\langle \ell,s'\rangle_L-\langle \ell',s\rangle_L+\theta_i(u,u').
\label{eq:local-standard-symplectic-form}
\end{equation}
We claim that the transition functions $v_{ij}$ preserve these local forms.

Indeed, let
\[
x=(\ell,u,s),
\qquad
y=(\ell',u',s')
\]
be local sections of
\[
L_j\oplus U_j\oplus L_j^{-1}.
\]
Then
\[
v_{ij}(x)
=
\bigl(
l_{ij}(\ell)+d_{ij}(u)+a_{ij}(s),
\ u_{ij}(u)+\gamma_{ij}(s),
\ l_{ij}^{-1}(s)
\bigr),
\]
and similarly for $y$. Substituting this into
\eqref{eq:local-standard-symplectic-form}, and using that $u_{ij}$ preserves
$\theta$, gives

\begin{equation}
\label{eq:symplectic-transition-difference}
\begin{aligned}
&\omega_i(v_{ij}(x),v_{ij}(y))-\omega_j(x,y) \\
={}&
\bigl\langle d_{ij}(u),l_{ij}^{-1}(s')\bigr\rangle_L
-\bigl\langle d_{ij}(u'),l_{ij}^{-1}(s)\bigr\rangle_L \\
&\quad
+\bigl\langle a_{ij}(s),l_{ij}^{-1}(s')\bigr\rangle_L
-\bigl\langle a_{ij}(s'),l_{ij}^{-1}(s)\bigr\rangle_L \\
&\quad
+\theta_i\bigl(u_{ij}(u),\gamma_{ij}(s')\bigr)
+\theta_i\bigl(\gamma_{ij}(s),u_{ij}(u')\bigr)
+\theta_i\bigl(\gamma_{ij}(s),\gamma_{ij}(s')\bigr).
\end{aligned}
\end{equation}

The two terms involving $a_{ij}$ cancel because $a_{ij}:L_j^{-1}\to L_i$
is a homomorphism between line bundles, so
\[
\bigl\langle a_{ij}(s),l_{ij}^{-1}(s')\bigr\rangle_L
=
\bigl\langle a_{ij}(s'),l_{ij}^{-1}(s)\bigr\rangle_L.
\]
The last term vanishes because the image of the line bundle $L_j^{-1}$ under
$\gamma_{ij}$ is pointwise a line in the symplectic vector bundle $U_i$,
hence isotropic.

It remains to check the mixed terms. Since $\gamma_{ij}=d_{ij}^{\theta}$,
\eqref{eq:theta-dual-cocycle} gives
\[
\theta_i\bigl(\gamma_{ij}(s'),u_{ij}(u)\bigr)
=
\bigl\langle d_{ij}(u),\,l_{ij}^{-1}(s')\bigr\rangle_L \quad \text{ and } \quad \theta_i\bigl(\gamma_{ij}(s),u_{ij}(u')\bigr)
=
\bigl\langle d_{ij}(u'),\,l_{ij}^{-1}(s)\bigr\rangle_L.
\]
The second mixed term cancels the term
\[
-\bigl\langle d_{ij}(u'),l_{ij}^{-1}(s)\bigr\rangle_L.
\]
Using skew-symmetry of $\theta_i$, the first mixed term cancels the term
\[
\bigl\langle d_{ij}(u),l_{ij}^{-1}(s')\bigr\rangle_L.
\] 
Therefore the right-hand side of
\eqref{eq:symplectic-transition-difference} is zero. Hence the local forms
$\omega_i$ glue to a global symplectic form $\omega$ on $V$.

The orthogonal complement of $L\subset V$ is
locally
\[
L_i\oplus U_i\subset L_i\oplus U_i\oplus L_i^{-1},
\]
and therefore globally
\[
L^\perp=E.
\]
The induced symplectic form on $E/L\cong U$ is $\theta$.

Conversely, suppose that $V$ carries such a symplectic form. Choose local
splittings adapted to the filtration
\[
0\subset L\subset E=L^\perp\subset V
\]
so that locally the form is written as in
\eqref{eq:local-standard-symplectic-form}. With respect to these splittings,
the transition functions have the form
\eqref{eq:V-transition-compatible} and must preserve the local standard forms.
Taking $x=(0,u,0)$ and $y=(0,0,s')$ in
\eqref{eq:symplectic-transition-difference}, we obtain
\[
\bigl\langle d_{ij}(u),l_{ij}^{-1}(s')\bigr\rangle_L
+
\theta_i\bigl(u_{ij}(u),\gamma_{ij}(s')\bigr)
=
0.
\]
Equivalently,
\[
\theta_i\bigl(\gamma_{ij}(s'),u_{ij}(u)\bigr)
=
\bigl\langle d_{ij}(u),l_{ij}^{-1}(s')\bigr\rangle_L.
\]
By the defining property \eqref{eq:theta-dual-cocycle}, this says precisely
that
\[
\gamma_{ij}=d_{ij}^{\theta}
\]
up to the change of local splittings, or equivalently up to a \v Cech coboundary.
Therefore the image of $\widetilde\delta\in\Ext^1(L^{-1},E)$ in $\Ext^1(L^{-1},U)$ is exactly $\delta^\theta$.
\end{proof}

We now record the consequence of Lemma~\ref{lem:symplectic-reconstruction} for the
extension data. Fix a symplectic bundle $(U,\theta)$ of rank $2n-2$, set
$L=K^{-1/2}$, and consider a first extension
\begin{equation}
    0\to L\to E\to U\to0
\label{eq:first-extension2}
\end{equation}
with class $\delta\in\Ext^1(U,L)$. By Lemma~\ref{lem:symplectic-reconstruction},
the compatible second extensions are precisely the lifts of
\[
\delta^\theta\in \Ext^1(L^{-1},U)\cong H^1(\Sigma,U\otimes L)
\]
under the natural map
\[
\Ext^1(L^{-1},E)\cong H^1(\Sigma,E\otimes L)
   \longrightarrow
\Ext^1(L^{-1},U)\cong H^1(\Sigma,U\otimes L).
\]

Tensoring \eqref{eq:first-extension2} by $L$, we obtain
\begin{equation}
    0\to L^2
\to E\otimes L
\to U\otimes L
\to0.
\label{eq:tensored-extension}
\end{equation}
Since $H^2(\Sigma,L^2)=0$, every class
\[
\delta^\theta\in H^1(\Sigma,U\otimes L)
\]
admits such a lift. The set of lifts is a torsor under
\[
\operatorname{coker}\!\left(
H^0(\Sigma,U\otimes L)\to H^1(\Sigma,L^2)
\right).
\]
In particular, whenever $H^0(\Sigma,U\otimes L)=0$, it is a torsor under
\[
H^1(\Sigma,L^2)=H^1(\Sigma,K^{-1}).
\]

Thus, compatible data
\[
(U,\theta,\delta,\widetilde\delta)
\]
reconstruct a symplectic bundle $(V,\omega)$ with a line subbundle
$L\subset V$. The inclusion
\[
\psi:L\hookrightarrow V
\]
is a nowhere-vanishing spinor and defines the Gaiotto Higgs field
\[
\Phi_\psi=\psi\otimes\psi.
\]

We will later restrict to the open locus where $U$ is stable of degree zero. There,
$U\otimes K^{-1/2}$ is semistable of negative slope, hence
\[
H^0(\Sigma,U\otimes K^{-1/2})=0.
\]
It follows by Riemann--Roch that the first extension spaces have constant dimension,
and the compatible second extension classes form torsors under the fixed vector space
$H^1(\Sigma,K^{-1})$. Over this open locus, the reconstruction data therefore form
an iterated affine bundle of fixed rank over the corresponding open subset of
$\cU(\Sp_{2n-2}(\C))$.

\subsection{The reconstruction stack}
\label{subsec:reconstruction-stack}

We now introduce an auxiliary reconstruction stack. It is used only to organize
the extension data and prove irreducibility; the final statements are statements
in the coarse Higgs moduli space.

Set
\[
L=K^{-1/2}.
\]
Let
\[
\mathfrak R
\]
be the algebraic stack whose $S$-points consist of a relative symplectic bundle
\[
(\cU,\Theta)
\]
of rank $2n-2$ on $\Sigma\times S$, a first extension
\[
0\to p_\Sigma^*L\to \cE\to \cU\to0,
\]
and a compatible second extension
\[
0\to \cE\to \cV\to p_\Sigma^*L^{-1}\to0,
\]
in the sense of Lemma~\ref{lem:symplectic-reconstruction}. Equivalently, the
second extension class is a lift of the class determined from the first extension
using the symplectic form $\Theta$.

The reconstruction is functorial in families and produces a relative symplectic
bundle $(\cV,\Omega)$, together with the spinor induced by
\[
p_\Sigma^*L\hookrightarrow \cE\hookrightarrow \cV.
\]
Let
\[
\mathfrak R^{\mathrm{st}}\subset \mathfrak R
\]
be the open substack on which the reconstructed Gaiotto Higgs bundle is stable.

Thus an $S$-point of $\mathfrak R^{\mathrm{st}}$ determines a family
\[
(\cV,\Omega,\Phi_\Psi)
\]
of stable $\Sp_{2n}(\C)$-Higgs bundles on $\Sigma\times S$. By the defining
property of the coarse moduli space, this family determines a morphism
\[
S\to \cM(\Sp_{2n}(\C)).
\]
Equivalently, the reconstruction defines a morphism
\[
f:\mathfrak R^{\mathrm{st}}\to \cM(\Sp_{2n}(\C)),
\]
where the moduli space is viewed as an algebraic space. On closed points,
\[
f(U,\theta,\delta,\widetilde\delta)=(V,\omega,\psi\otimes\psi).
\]

We use the notation $\mathbb V(\mathcal K)$ for the vector-bundle stack
associated with a perfect complex $\mathcal K$ of amplitude contained in
$[-1,0]$. In the cases below, this is the relative extension stack whose
geometric fibres have isomorphism classes given by the relevant $\Ext^1$-groups
and stabilizer directions given by the corresponding $\Ext^0$-groups.

\begin{lem}
\label{lem:R-irreducible}
The stack $\mathfrak R$ is irreducible.
\end{lem}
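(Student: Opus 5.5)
We realize $\mathfrak R$ as an iterated tower of vector-bundle stacks over the moduli stack $\mathfrak{Bun}_{\Sp_{2n-2}}$ of symplectic bundles $(U,\theta)$ of rank $2n-2$. Irreducibility then follows from irreducibility of the base together with connectedness and smoothness of the fibres. First, $\mathfrak{Bun}_{\Sp_{2n-2}(\C)}$ is a smooth connected algebraic stack, hence irreducible, since $\Sp_{2n-2}(\C)$ is simply connected and $\pi_0$ of $\mathrm{Bun}_G$ is $\pi_1(G)=0$. For $n=2$ we use $\Sp_2=\SL_2$, and for $n=1$ (excluded) the base would be a point.

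Next, with $\pi:\Sigma\times \mathfrak{Bun}\to\mathfrak{Bun}$ and universal $\cU$, the stack $\mathfrak E$ of first extensions is $\V\bigl(R\pi_*(\cU^*\otimes p_\Sigma^*L)[1]\bigr)$. Its fibres are the vector-bundle stacks $[\Ext^1(U,L)/\Hom(U,L)]$. The complex $R\pi_*(\cU^*\otimes L)$ is perfect of amplitude $[0,1]$ with constant Euler characteristic $-(2n-2)(g-1)\cdot$(appropriate), so the stack $\V(\cdot)\to\mathfrak{Bun}$ is smooth with fibres of constant dimension $-\chi$. A smooth morphism with irreducible (indeed connected, smooth) fibres of constant dimension to an irreducible base has irreducible source. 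This relies on the standard fact that for a smooth morphism, open and with irreducible fibres, the source is irreducible when the base is. The key input is that vector-bundle stacks $[A^1/A^0]$ are smooth over the base regardless of jumps in $h^0$.

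Then, over $\mathfrak E$, with universal extension $\cE$, Lemma~\ref{lem:symplectic-reconstruction} identifies compatible second extensions with the fibre of the map $\V(R\pi_*(\cE\otimes L)[1])\to \V(R\pi_*(\cU\otimes L)[1])$ over the section $\delta^\theta$. By the triangle from \eqref{eq:tensored-extension}, $R\pi_*(L^2)\to R\pi_*(\cE\otimes L)\to R\pi_*(\cU\otimes L)$, this fibre is a torsor under $\V(R\pi_*(L^2)[1])$, using $H^2=0$ so that the map is surjective on $H^1$. Since $H^0(L^2)=H^0(K^{-1})=0$, the torsor is under the constant vector bundle $H^1(\Sigma,K^{-1})$, which has dimension $3g-3$. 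I expect the main obstacle to be here: one must make precise that, at the level of stacks (not just classes), the lifts form a torsor under $\V(R\pi_*L^2[1])$ even where $H^0(U\otimes L)$ jumps. The cokernel description in the text changes dimension, but the stacky fibre $[\,\text{lifts}/\Hom(L^{-1},E)\text{-directions}\,]$ does not. I would handle this by writing everything with two-term complexes and using that the fibre of a morphism of vector-bundle stacks over a section, when the cone is $R\pi_*L^2[1]$, is a torsor for $\V(\mathrm{cone}[-1])$; this torsor is smooth with fibre dimension $-\chi(K^{-1})=3g-3$. Hence $\mathfrak R\to\mathfrak E$ is smooth surjective with irreducible fibres of constant dimension, and $\mathfrak R$ is irreducible.

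Finally, we record the dimension count $\dim\mathfrak R=\dim\Sp_{2n-2}(g-1)+2(2n-2)(g-1)+(3g-3)$, up to the stabilizer corrections. This count is consistent with the later claim $\dim\Sp_{2n}(\C)(g-1)$ and serves as a check on the torsor step.
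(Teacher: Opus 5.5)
Your proposal is correct and follows the paper's proof essentially step by step. Both arguments use irreducibility of $\mathrm{Bun}(\Sp_{2n-2}(\C))$, realize the first-extension stack as the vector-bundle stack of $R\pi_*(\cU^*\otimes p_\Sigma^*L)[1]$, and show that the compatible second extensions form a torsor under the vector-bundle stack of $R\pi_*(p_\Sigma^*L^2)[1]$, using $R^2\pi_*=0$. (Minor indexing slip: it is the fibre of $C_E\to C_U$, not the cone, that equals $R\pi_*(p_\Sigma^*L^2)[1]$; your remark that $H^0(\Sigma,K^{-1})=0$ makes this torsor an honest $H^1(\Sigma,K^{-1})$-affine bundle is a small sharpening of the paper's statement.)
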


\begin{proof}
Let
\[
\mathfrak B=\operatorname{Bun}(\Sp_{2n-2}(\C)).
\]
This stack is irreducible \cite{DrinfeldSimpson,SorgerLectures}. Let
\[
\pi_{\mathfrak B}:\Sigma\times\mathfrak B\to\mathfrak B
\]
be the projection, and let $(\cU,\Theta)$ denote the universal symplectic
bundle.

We first consider the stack $\mathfrak E_1$ of first extensions
\[
0\to p_\Sigma^*L\to \cE\to \cU\to0.
\]
It is the vector-bundle stack over $\mathfrak B$ associated with the perfect
complex
\[
R\pi_{\mathfrak B*}\bigl(\cU^*\otimes p_\Sigma^*L\bigr)[1]
\simeq
R\pi_{\mathfrak B*}\bigl(\cU\otimes p_\Sigma^*L\bigr)[1],
\]
where the last identification uses $\Theta$. On geometric fibres, its
isomorphism classes are
\[
\Ext^1(U,L)\cong H^1(\Sigma,U^*\otimes L)
       \cong H^1(\Sigma,U\otimes L),
\]
and its stabilizer directions are
\[
\Ext^0(U,L)\cong H^0(\Sigma,U \otimes L).
\]
Since vector-bundle stacks are smooth-locally quotients of vector bundles by
vector bundles, a vector-bundle stack over an irreducible algebraic stack is
irreducible. Therefore $\mathfrak E_1$ is irreducible.

Let
\[
\pi_{\mathfrak E_1}:\Sigma\times\mathfrak E_1\to\mathfrak E_1
\]
be the projection. Over $\Sigma\times\mathfrak E_1$, let
\[
0\to p_\Sigma^*L\to \cE\to \cU\to0
\]
be the universal first extension. The compatible second extensions are the
lifts of the section $\delta^\Theta$ under the morphism of relative extension stacks induced by $\cE\to\cU$:
\[
\mathbb V\!\left(
R\pi_{\mathfrak E_1*}\bigl(\cE\otimes p_\Sigma^*L\bigr)[1]
\right)
\longrightarrow
\mathbb V\!\left(
R\pi_{\mathfrak E_1*}\bigl(\cU\otimes p_\Sigma^*L\bigr)[1]
\right).
\]
Tensoring the universal first extension by $p_\Sigma^*L$, we obtain
\[
0\to p_\Sigma^*L^2
\to \cE\otimes p_\Sigma^*L
\to \cU\otimes p_\Sigma^*L
\to0.
\]
Since $\Sigma$ is a curve,
\[
R^2\pi_{\mathfrak E_1*}(p_\Sigma^*L^2)=0.
\]
Thus the lifting problem is unobstructed. The stack of liftings, namely the
$2$-fibre of the above morphism over the section $\delta^\Theta$, is a
torsor under the vector-bundle stack associated with
\[
R\pi_{\mathfrak E_1*}(p_\Sigma^*L^2)[1].
\]
Equivalently, on geometric fibres, the set of liftings is affine under
$H^1(\Sigma,L^2)$, while $H^0(\Sigma,L^2)$ accounts for automorphisms.

Thus $\mathfrak R$ is obtained from the irreducible stack $\mathfrak E_1$
as a torsor under a vector-bundle stack. Hence $\mathfrak R$ is irreducible.
\end{proof}

\begin{lem}
\label{lem:Rst-irreducible}
The stack $\mathfrak R^{\mathrm{st}}$ is irreducible. Consequently, the
closure of
\[
f(\mathfrak R^{\mathrm{st}})
\]
in $\cM(\Sp_{2n}(\C))$ is irreducible.
\end{lem}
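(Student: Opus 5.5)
The plan is to deduce both assertions from Lemma~\ref{lem:R-irreducible} together with two general facts: openness of stability, and continuity of the reconstruction morphism $f$.

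First, $\mathfrak R^{\mathrm{st}}\subset\mathfrak R$ is open. Stability of $\Sp_{2n}(\C)$-Higgs bundles is an open condition in algebraic families; this is the same fact used in the proof of Lemma~\ref{lem:hecke-smoothing}. Applying it to the universal reconstructed family $(\cV,\Omega,\Phi_\Psi)$ over a smooth atlas of $\mathfrak R$ shows that the stable locus is open. An open substack of an irreducible algebraic stack is irreducible provided it is nonempty, since irreducibility is a property of the underlying topological space $|\mathfrak R|$ and nonempty opens of irreducible spaces are dense and irreducible.

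Second, we must show $\mathfrak R^{\mathrm{st}}\neq\emptyset$, and this is the main point requiring care. I would exhibit an explicit stable point. Take $U$ a stable symplectic bundle of rank $2n-2$ with the split first extension $E=L\oplus U$, and choose a compatible second extension $V$ with class in $H^1(\Sigma,L^2)=H^1(\Sigma,K^{-1})$ nonzero. This means $0\to K^{-1/2}\to W\to K^{1/2}\to 0$ is a non-split extension, and $V=W\oplus U$. Stability via Proposition~\ref{prop:spinor-stability}(ii) asks that every isotropic $\Phi_\psi$-invariant subbundle $U'$ with $\psi\in U'^\perp\otimes K^{1/2}$ have negative degree. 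Choosing the extension class so that $W$ is stable makes $W\oplus U$ a polystable symplectic bundle, and destabilizing isotropic subbundles of degree $0$ are then excluded by the condition $\psi\in U'^{\perp}$ together with the nonsplitness. If this direct check is delicate, an alternative is to argue that $\mathfrak R$ contains the polystable maximal point $(K^{-1/2}\oplus U\oplus K^{1/2},1)$ of Proposition~\ref{prop:maximal-fixed-locus}. In that case one obtains stable points nearby by a general openness argument, or by quoting the existence of stable nilpotent Higgs bundles in $Z^-$, whose downward flows from $Z$ are represented by such extension data.

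Finally, $f:\mathfrak R^{\mathrm{st}}\to\cM(\Sp_{2n}(\C))$ is a morphism and hence continuous on underlying topological spaces. Therefore $f(|\mathfrak R^{\mathrm{st}}|)$ is the continuous image of an irreducible space, so it is irreducible. The closure of an irreducible subset is irreducible, which gives the second assertion.
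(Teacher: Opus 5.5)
Your skeleton is the paper's: $\mathfrak R$ is irreducible, $\mathfrak R^{\mathrm{st}}$ is open in it, a nonempty open substack of an irreducible stack is irreducible, and the closure of the image under the continuous map $f$ is irreducible. The gap is nonemptiness. You correctly identify it as the crux, but you do not establish it.

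In your main route two claims are only asserted.
\begin{enumerate}[(a)]
\item Some class in $H^1(\Sigma,K^{-1})$ gives a stable $W$. This is true but needs an argument. A line subbundle of $W$ of degree $\geq 0$ must map nontrivially to $K^{1/2}$, so it is $K^{1/2}(-D)$ with $\deg D\leq g-1$. The classes for which such a subbundle lifts form a locus of dimension at most $2(g-1)<3g-3$.
\item $W\oplus U$ has no degree-zero isotropic $F$ with $\psi\in H^0(\Sigma,F^\perp\otimes K^{1/2})$. Non-splitness is not the operative input here. You need that degree-zero subbundles of the polystable bundle $W\oplus U$ are assembled from its stable constituents. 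If $W$ is not a constituent of $U$, there are no degree-zero isotropic subbundles at all. If $U$ contains a copy of $W$ (e.g.\ $n=2$, $U\cong W$), the Lagrangian graphs $\{(x,\pm ix)\}$ do occur, and they are excluded only because $\psi\notin F^\perp=F$.
\end{enumerate}

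Your fallback is not valid as stated. Openness of stability only says that stable points have stable neighbourhoods, so it cannot produce stable points near a strictly polystable one. Invoking stable points of $Z^-$ represented by extension data presupposes essentially what is being proved.

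The missing observation, used in the proof of Lemma~\ref{lem:Y-nonempty}, is that no genericity of the extension data is needed; only stability of $U$ matters. Let $F\subset V$ be nonzero isotropic with $\psi\in H^0(\Sigma,F^\perp\otimes K^{1/2})$.
\begin{itemize}
\item Then $F\subset L^\perp=E$, and $F\cap L$ is either zero or of the form $L(-D)$, so it has degree $\leq 0$.
\item The quotient $F/(F\cap L)$ embeds in $U=E/L$ with $\theta$-isotropic image, because $L$ is the radical of $\omega|_E$.
\item If this quotient is nonzero, stability of $U$ makes its saturation, hence the quotient itself, of negative degree, so $\deg F<0$.
\item If the quotient is zero, then $F\subset L$ and $\deg F\leq 1-g<0$.
\end{itemize}
This holds for arbitrary $\delta,\widetilde\delta$. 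In particular, the split point $(K^{-1/2}\oplus U\oplus K^{1/2},1)$ from your fallback is already stable whenever $U$ is stable. No perturbation, and no choice of a stable $W$, is required.
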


\begin{proof}
By Lemma~\ref{lem:R-irreducible}, the stack $\mathfrak R$ is irreducible.
The substack
\[
\mathfrak R^{\mathrm{st}}\subset\mathfrak R
\]
is open, and it is nonempty by Lemma~\ref{lem:Y-nonempty}. Hence
$\mathfrak R^{\mathrm{st}}$ is irreducible.

The image of an irreducible topological space under a continuous map is
irreducible, and the same is true for its closure. Therefore the closure of
$f(\mathfrak R^{\mathrm{st}})$ in $\cM(\Sp_{2n}(\C))$ is irreducible.
\end{proof}

\subsection{The dense open reconstruction locus}

Let
\[
\mathfrak Y\subset\mathfrak R^{\mathrm{st}}
\]
be the substack defined by the conditions
\begin{equation}
    U\ \text{stable},
\qquad
H^0(\Sigma,U\otimes K^{1/2})=0.
\label{eq:Y-open-conditions}
\end{equation}
Set
\[
\cY:=f(\mathfrak Y)\subset \cM(\Sp_{2n}(\C))
\]
for its image in the Higgs moduli space.

\begin{lem}
\label{lem:Y-nonempty}
The substack $\mathfrak Y$ is nonempty and Zariski open in
$\mathfrak R^{\mathrm{st}}$. Hence it is dense in
$\mathfrak R^{\mathrm{st}}$, and
\begin{equation}
    \cG^{\mathrm{nv}}\subset \overline{\cY}.
\label{eq:Gnv-in-Yclosure}
\end{equation}
\end{lem}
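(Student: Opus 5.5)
Three things need to be shown: that $\mathfrak Y$ is open in $\mathfrak R^{\mathrm{st}}$, that it is nonempty, and the inclusion \eqref{eq:Gnv-in-Yclosure}; density then follows from irreducibility.

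\emph{Openness.} Both conditions in \eqref{eq:Y-open-conditions} depend only on the underlying symplectic bundle $U$. They are therefore pulled back along the smooth forgetful morphism $\mathfrak R^{\mathrm{st}}\to\mathfrak B=\operatorname{Bun}(\Sp_{2n-2}(\C))$. Stability of $U$ is open on $\mathfrak B$. The condition $H^0(\Sigma,U\otimes K^{1/2})=0$ is open by upper semicontinuity of $h^0$ in flat families. Hence $\mathfrak Y$ is open in $\mathfrak R^{\mathrm{st}}$.

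\emph{Nonemptiness.} I would first take $U$ stable with $H^0(\Sigma,U\otimes K^{1/2})=0$. Such $U$ exist because the generalized theta divisor in $\cU(\Sp_{2n-2}(\C))$ is a proper divisor, so a general stable $U$ avoids it. For $n=2$, take $U$ a general stable $\SL_2$-bundle off its theta divisor. Next I would take $\delta=0$, so $\delta^\theta=0$. Then choose $\widetilde\delta$ to be a general lift, namely a general class in $H^1(\Sigma,K^{-1})$ giving $0\to K^{-1/2}\to F\to K^{1/2}\to0$, and set $V=F\oplus U$.

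I would then check stability through Proposition~\ref{prop:spinor-stability}(ii). Let $W\subset V$ be isotropic with $\psi\in W^\perp\otimes K^{1/2}$. The components of $W$ in $U$ have negative degree or are zero, since $U$ is stable. Any contribution involving $F$ has degree $<0$ when $F$ is a nonsplit extension, and in particular the line $K^{-1/2}$ itself has negative degree. The mixed subbundles I would handle by projecting $W$ to $F$ and to $U$ and bounding degrees.

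The main obstacle is that $F$ is the uniformizing extension twisted, and for a general $\widetilde\delta$ it is a stable rank-two bundle. Stability of the rank-two Higgs bundle $(F,\psi\otimes\psi)$ is exactly Hitchin's rank-two Gaiotto case. I would therefore invoke the openness of stability: the pair $(K^{-1/2}\oplus U\oplus K^{1/2},1)$ of Example~\ref{ex:maximal-spinor} is polystable, and deforming $\widetilde\delta$ away from zero yields stable points. If needed, I would also deform $\delta$ to break the splitting $F\oplus U$, where genericity kills the remaining degree-zero isotropic subbundles.

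\emph{The inclusion \eqref{eq:Gnv-in-Yclosure}.} Every point of $\cG^{\mathrm{nv}}$ comes from a stable pair with nowhere-vanishing spinor. By the filtration construction and Lemma~\ref{lem:symplectic-reconstruction}, such a pair is $f$ of a point of $\mathfrak R^{\mathrm{st}}$, so $\cG^{\mathrm{nv}}\subset f(\mathfrak R^{\mathrm{st}})$. Since $\mathfrak R^{\mathrm{st}}$ is irreducible (Lemma~\ref{lem:R-irreducible} together with openness; nonemptiness is now established directly), the nonempty open substack $\mathfrak Y$ is dense in it. Continuity of $f$ then gives $f(\mathfrak R^{\mathrm{st}})\subset\overline{f(\mathfrak Y)}=\overline{\cY}$.

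I must avoid circularity here, because Lemma~\ref{lem:Rst-irreducible} cites this lemma for nonemptiness. Irreducibility of $\mathfrak R^{\mathrm{st}}$ should follow from the irreducibility of $\mathfrak R$ once nonemptiness of $\mathfrak Y\subset\mathfrak R^{\mathrm{st}}$ is proved directly as above.
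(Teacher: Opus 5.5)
Your openness argument and your last step agree with the paper. In that step the reconstruction gives $\cG^{\mathrm{nv}}\subset f(\mathfrak R^{\mathrm{st}})$, a nonempty open substack of the irreducible $\mathfrak R^{\mathrm{st}}$ is dense, and continuity of $f$ finishes. You also sidestep the circularity with Lemma~\ref{lem:Rst-irreducible} the same way the paper does.

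The gap is in nonemptiness, which is the only substantive step. You never carry out the stability check for $V=F\oplus U$ (``mixed subbundles I would handle by projecting\dots''), and the fallback does not work. Openness of the stable locus passes stability from a \emph{stable} point to nearby points, not from a polystable one. Knowing only that $(K^{-1/2}\oplus U\oplus K^{1/2},1)$ is polystable says nothing about points with $\widetilde\delta\neq0$: a small deformation of a strictly polystable point can stay strictly semistable. The appeal to genericity of $\delta$ to remove ``remaining degree-zero isotropic subbundles'' is likewise not an argument.

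The missing idea is to use the filtration rather than the splitting. Since $\psi$ embeds $L=K^{-1/2}$ as a subbundle, the condition $\psi\in H^0(\Sigma,W^\perp\otimes K^{1/2})$ is equivalent to $L\subset W^\perp$, i.e.\ $W\subset L^\perp=E$. So $W$ never meets the $K^{1/2}$ direction, and nonsplitness of $F$ plays no role. Projecting $W$ to $F$ without this constraint is what makes the positive-degree line $K^{1/2}$ look like a threat. The degree bound then runs as follows:
\begin{enumerate}[(a)]
\item Put $W'=W\cap L$. It is $0$ or $L(-D)$, so $\deg W'\le 0$.
\item The quotient $W/W'$ injects into $U=E/L$ with isotropic image. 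Its saturation has negative degree because $U$ is stable, so $\deg W<0$ when $W/W'\neq0$.
\item If $W/W'=0$, then $W\subset L$ and $\deg W\le 1-g<0$.
\end{enumerate}
So every datum $(U,\theta,\delta,\widetilde\delta)$ with $U$ stable lies in $\mathfrak R^{\mathrm{st}}$, for arbitrary $\delta$ and arbitrary compatible lift $\widetilde\delta$. In particular, your central point $(K^{-1/2}\oplus U\oplus K^{1/2},1)$ is already stable, not merely polystable, once $U$ is stable. No deformation or genericity argument is needed, and this is exactly how the paper proves nonemptiness.
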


\begin{proof}
The condition that $U$ is stable is open, and the condition
\[
H^0(\Sigma,U\otimes K^{1/2})=0
\]
is open by upper semicontinuity. Hence $\mathfrak Y$ is open in
$\mathfrak R^{\mathrm{st}}$.

It remains to prove nonemptiness. The generalized theta divisor
\[
\Theta_{2n-2}
=
\left\{
U\in\cU(\Sp_{2n-2}(\C))
\st 
H^0(\Sigma,U\otimes K^{1/2})\neq0
\right\}
\]
is a proper divisor in the moduli space of semistable symplectic bundles
\cite{BeauvilleLaszloSorgerTheta}. Hence a general stable symplectic bundle
$U$ of rank $2n-2$ satisfies
\[
H^0(\Sigma,U\otimes K^{1/2})=0.
\]
Since $U$ is stable of degree zero and $\deg K^{-1/2}<0$, we also have
\[
H^0(\Sigma,U\otimes K^{-1/2})=0.
\]
Choose such a $U$, choose a first extension class
\[
\delta\in H^1(\Sigma,U\otimes K^{-1/2}),
\]
and choose a compatible lift of $\delta^\theta$, which exists because
$H^2(\Sigma,K^{-1})=0$.

We claim that the reconstructed spinor pair $(V,\omega,\psi)$ is stable.
By Proposition~\ref{prop:spinor-stability}, it is enough to check the spinor
stability condition.

Let $F\subset V$ be a nonzero isotropic subbundle such that
\[
\psi\in H^0(\Sigma,F^\perp\otimes K^{1/2}).
\]
Since $\psi$ identifies $L=K^{-1/2}$ with a subbundle of $V$, this condition
is equivalent to
\[
L\subset F^\perp,
\]
or equivalently
\[
F\subset L^\perp=E.
\]
Set
\[
F':=F\cap L.
\]
Then $F/F'$ injects into $U=E/L$.

If $F/F'\neq0$, let $S\subset U$ be the saturation of the image of
$F/F'$. The subbundle $S$ is isotropic in $U$. Since $U$ is stable of
degree zero,
\[
\deg S<0.
\]
As saturation can only increase degree,
\[
\deg(F/F')\leq \deg S<0.
\]
Moreover, either $F'=0$, or $F'\simeq L(-D)$ for some effective divisor
$D$. Thus $\deg F'\leq0$, and therefore
\[
\deg F=\deg F'+\deg(F/F')<0.
\]

If $F/F'=0$, then $F=F'\subset L$. Since $F\neq0$, there is an effective
divisor $D$ such that
\[
F\simeq L(-D).
\]
Hence
\[
\deg F\leq \deg L=1-g<0.
\]

Thus every nonzero isotropic subbundle satisfying the spinor stability condition
has negative degree. Hence $(V,\omega,\psi)$ is stable, and therefore the
associated Higgs bundle $(V,\omega,\psi\otimes\psi)$ is stable by
Proposition~\ref{prop:spinor-stability}. This proves that $\mathfrak Y$ is
nonempty.

Now $\mathfrak R^{\mathrm{st}}$ is a nonempty open substack of the irreducible
stack $\mathfrak R$, so it is irreducible. Since $\mathfrak Y$ is a nonempty
open substack of $\mathfrak R^{\mathrm{st}}$, it is dense in
$\mathfrak R^{\mathrm{st}}$. Continuity of $f$ gives
\[
f(\mathfrak R^{\mathrm{st}})\subset \overline{f(\mathfrak Y)}
=\overline{\cY},
\]
where the closures are taken in $\cM(\Sp_{2n}(\C))$.

By the reconstruction lemma, the image of
\[
f:\mathfrak R^{\mathrm{st}}\to \cM(\Sp_{2n}(\C))
\]
on geometric points is precisely the locus
\[
\cG^{\mathrm{nv}}\subset \cM(\Sp_{2n}(\C))
\]
of stable Gaiotto Higgs bundles represented by nowhere-vanishing spinors. Hence $\cG^{\mathrm{nv}}\subset \overline{\cY}$.
\end{proof}

\subsection{Dimension and the attracting component}

We now compute the dimension of $\cY$ and identify the Bia\l ynicki--Birula
stratum containing it. Let
\[
\cU^\circ\subset \cU(\Sp_{2n-2}(\C))
\]
be the open locus of stable symplectic bundles $U$ satisfying
\[
H^0(\Sigma,U\otimes K^{1/2})=0.
\]
On this locus, stability of $U$ and $\deg K^{-1/2}<0$ give
\[
H^0(\Sigma,U\otimes K^{-1/2})=0.
\]
Thus the first extension classes form a vector bundle with fibre
\[
H^1(\Sigma,U\otimes K^{-1/2}),
\]
and the compatible second extension classes form a torsor under the fixed vector
space
\[
H^1(\Sigma,K^{-1}),
\]
since $H^0(\Sigma,K^{-1})=0$. Hence the dimension of the reconstruction
parameters over $\cU^\circ$ is computed by the usual fibre-dimension count.

The map relevant for this count is
\[
f|_{\mathfrak Y}:\mathfrak Y\to \cM(\Sp_{2n}(\C)).
\]
It is generically finite onto its image. Indeed, for a point
\[
(V,\omega,\Phi)\in f(\mathfrak Y),
\]
the Higgs field has rank one and satisfies
\[
\operatorname{im}(\Phi)=L\otimes K,
\qquad
\ker(\Phi)=L^\perp.
\]
Therefore the filtration
\[
0\subset L\subset E\subset V
\]
is intrinsic to $(V,\Phi)$:
\[
L=\operatorname{im}(\Phi)\otimes K^{-1},
\qquad
E=\ker(\Phi).
\]
The quotient $U=E/L$, its induced symplectic form, and the two extension
classes are then recovered from this filtration. The remaining ambiguity is
finite: the equality
\[
\Phi=\psi\otimes\psi
\]
recovers the spinor morphism $K^{-1/2}\to V$, equivalently the identification
$K^{-1/2}\simeq L$, up to sign. Thus the generic fibre of
$f|_{\mathfrak Y}$ is finite.

By Riemann--Roch,
\[
h^1(\Sigma,U\otimes K^{-1/2})=4(n-1)(g-1),
\qquad
h^1(\Sigma,K^{-1})=3(g-1).
\]
Together with
\[
\dim \cU(\Sp_{2n-2}(\C))=(n-1)(2n-1)(g-1),
\]
this gives
\begin{equation}
\label{eq:Y-dimension}
\begin{aligned}
\dim\cY
&=
(n-1)(2n-1)(g-1)+4(n-1)(g-1)+3(g-1)\\
&=
n(2n+1)(g-1).
\end{aligned}
\end{equation}
Equivalently,
\[
\dim\cY=\dim\Sp_{2n}(\C)\, (g-1).
\]

\begin{lem}
\label{lem:Y-in-Zminus}
We have
\[
\cY\subset Z^-.
\]
\end{lem}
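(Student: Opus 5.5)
The plan is to exhibit the $\lambda\to\infty$ limit explicitly. I would degenerate the extension data of the filtration
\[
0\subset L\subset E=L^\perp\subset V,\qquad L=K^{-1/2},
\]
by a one-parameter Rees-type rescaling, and check that the resulting family of Higgs bundles is $\C^\times$-equivariantly isomorphic to $(V,\omega,\lambda\Phi)$, with limit the point of $Z$ attached to $U$.

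\textbf{Step 1: the rescaled family.} Take a point of $\cY$, given by data $(U,\theta,\delta,\widetilde\delta)\in\mathfrak Y$. Use the local splittings $V_i\cong L_i\oplus U_i\oplus L_i^{-1}$ and the transition matrices $v_{ij}$ of \eqref{eq:V-transition-compatible}, with $\gamma_{ij}=d^\theta_{ij}$. In these frames $\psi=(1,0,0)$ and $\omega$ is the standard form \eqref{eq:local-standard-symplectic-form}. Hence $\Phi=\psi\otimes\psi$ has a single nonzero block, namely the identity $L^{-1}\to L\otimes K$, sitting in the upper-right corner.

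For $s\in\A^1$, define the new transition functions
\[
v_{ij}(s)=\begin{pmatrix} l_{ij} & s\,d_{ij} & s^2 a_{ij}\\ 0 & u_{ij} & s\,\gamma_{ij}\\ 0&0& l_{ij}^{-1}\end{pmatrix}.
\]
For $s\neq0$ this equals $h_s\,v_{ij}\,h_s^{-1}$, where $h_s=\mathrm{diag}(s^{-1},1,s)$ is a constant local symplectic automorphism. Consequently the cocycle condition holds for $s\neq0$. Since the entries are polynomial in $s$, it then holds identically on $\A^1$.

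The computation \eqref{eq:symplectic-transition-difference} is homogeneous in the relevant sense. The $d$ and $\gamma$ terms both scale by $s$, the $a$ terms by $s^2$, and the $\gamma\gamma$ term vanishes. So the local standard forms still glue, giving a relative symplectic bundle $\cV$ on $\Sigma\times\A^1$. Keep $\Psi=(1,0,0)$ in every chart; this is compatible, because the first column of $v_{ij}(s)$ is unchanged. Then $\Phi_\Psi=\Psi\otimes\Psi$ is a family of Gaiotto Higgs fields. Its fibre at $s=0$ is $(K^{-1/2}\oplus U\oplus K^{1/2},\Phi_0)$, which is the point of $Z$ attached to $U$.

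\textbf{Step 2: identification with the scaling flow.} For $s\neq0$, the automorphism $h_s$ gives an isomorphism $(\cV_s,\Phi_s)\cong(V,h_s^{-1}\Phi h_s)$. The block of $\Phi$ maps the weight-$(s)$ summand to the weight-$(s^{-1})$ summand, so $h_s^{-1}\Phi h_s=s^{-2}\Phi$. Thus $(\cV_s,\Phi_s)\cong(V,\lambda\Phi)$ with $\lambda=s^{-2}$, and $\lambda\to\infty$ corresponds to $s\to0$.

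\textbf{Step 3: the limit lies in $Z$.} The limit Higgs bundle $(K^{-1/2}\oplus U\oplus K^{1/2},\Phi_0)$ is polystable, since it is $\iota$ of the pair in Example~\ref{ex:maximal-spinor}, using Proposition~\ref{prop:spinor-stability}(i). So the whole family over $\A^1$ consists of semistable Higgs bundles: the general fibres are stable by definition of $\mathfrak R^{\mathrm{st}}$. The family therefore induces a morphism $\A^1\to\cM(\Sp_{2n}(\C))$. By separatedness of the coarse moduli space, the $\lambda\to\infty$ limit of $(V,\lambda\Phi)$ is the image of $s=0$. By Lemma~\ref{lem:Z-fixed-component} this image lies in $Z$, which proves $\cY\subset Z^-$.

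\textbf{Main obstacle.} The delicate point is making sure the rescaled data really form an algebraic family of symplectic Higgs bundles over all of $\A^1$, including $s=0$. Concretely, the symplectic compatibility $\gamma_{ij}=d^\theta_{ij}$ must be preserved under the different weights $s$ and $s^2$, and $\Psi$ must stay regular. I would verify both directly from \eqref{eq:symplectic-transition-difference}, as sketched in Step 1. An alternative would be to invoke \cite[Proposition~3.26]{GonzalezVeryStableRegular}, noting that the filtration above is the $\Phi$-adapted parabolic reduction whose associated graded is the limit.
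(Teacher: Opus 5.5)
Your proposal is correct and is essentially the paper's argument: you conjugate the transition functions in the local splitting $L\oplus U\oplus L^{-1}$ by the weight-$(1,0,-1)$ torus, note that the family extends to the graded object at $s=0$, and identify the fibre at $s\neq0$ with $(V,s^{-2}\Phi)$. Your Step~3 (polystability of the limit and separatedness of $\cM(\Sp_{2n}(\C))$) makes explicit what the paper leaves implicit. There is one harmless slip. With $h_s=\mathrm{diag}(s^{-1},1,s)$ one actually has $v_{ij}(s)=h_s^{-1}v_{ij}h_s$, and the transported Higgs field is $h_s\Phi h_s^{-1}=s^{-2}\Phi$; you swapped $h_s$ and $h_s^{-1}$ in both places, so the two errors cancel and $\lambda=s^{-2}$ is right.
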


\begin{proof}
Let
\[
(V,\omega,\Phi_\psi)\in\cY.
\]
By construction, $V$ carries a filtration
\[
0\subset L\subset E=L^\perp\subset V
\]
with
\[
L=K^{-1/2},\qquad E/L\simeq U,\qquad V/E\simeq L^{-1}=K^{1/2}.
\]
Moreover,
\[
\Phi_\psi(E)=0,
\qquad
\Phi_\psi(V)\subset L\otimes K.
\]
Thus, with respect to this filtration, the only nonzero graded component of the
Higgs field is the induced map
\[
V/E=L^{-1}\to L\otimes K.
\]
Since $L=K^{-1/2}$, this is a nonzero map
\[
K^{1/2}\to K^{1/2},
\]
and, after rescaling the two line summands, it is the identity.

We now take the limit under Higgs-field scaling. Choose local splittings of the
filtration, so that locally
\[
V\simeq L\oplus U\oplus L^{-1}.
\]
For $s\in\C^\times$, let
\[
g_s=\operatorname{diag}(s,1,s^{-1})
\]
with respect to this decomposition. Since $g_s$ preserves the standard
symplectic form on
\[
L\oplus U\oplus L^{-1},
\]
conjugating the transition functions of $V$ by $g_s$ gives a family of
symplectic bundles which is isomorphic to $(V,\omega)$ for $s\neq0$ and
extends to $s=0$. Its central fibre is the graded symplectic bundle
\[
L\oplus U\oplus L^{-1}.
\]
The Higgs fields
\[
g_s\,(s^{-2}\Phi_\psi)\,g_s^{-1}
\]
also extend to $s=0$. Since $\Phi_\psi$ has only the graded component
$L^{-1}\to L\otimes K$, this component has weight $s^2$ under conjugation by
$g_s$, and the factor $s^{-2}$ gives a finite nonzero limit. The central
fibre is therefore
\[
K^{-1/2}\oplus U\oplus K^{1/2}
\]
with Higgs field given by the identity component
\[
K^{1/2}\to K^{1/2}.
\]
For $s\neq0$, this family is isomorphic to
\[
(V,\omega,s^{-2}\Phi_\psi).
\]
Setting $\lambda=s^{-2}$, we obtain
\[
\lim_{\lambda\to\infty}(V,\omega,\lambda\Phi_\psi)\in Z.
\]
Hence $(V,\omega,\Phi_\psi)\in Z^-$.
\end{proof}

\begin{prop}
\label{prop:reconstruction-family-final}
One has
\[
\overline{\cY}=\overline{Z^-}.
\]
\end{prop}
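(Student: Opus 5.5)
The plan is a dimension count inside the nilpotent cone. By Lemma~\ref{lem:Y-in-Zminus}, $\cY\subset Z^-$, so $\overline{\cY}\subset\overline{Z^-}$. By Lemma~\ref{lem:Rst-irreducible}, $\overline{f(\mathfrak R^{\mathrm{st}})}$ is irreducible. By Lemma~\ref{lem:Y-nonempty}, $\mathfrak Y$ is dense in $\mathfrak R^{\mathrm{st}}$, so this closure equals $\overline{\cY}$. Hence $\overline{\cY}$ is an irreducible closed subset of $\Nilp(\Sp_{2n}(\C))$. By \eqref{eq:Y-dimension} it has dimension $\dim\Sp_{2n}(\C)\,(g-1)$. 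Laumon's theorem \cite{LaumonNilpotentCone} says every irreducible component of the nilpotent cone has exactly this dimension. An irreducible closed subset of full dimension must therefore be an irreducible component, say $C=\overline{\cY}$.

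To conclude $\overline{\cY}=\overline{Z^-}$, I will show that $\overline{Z^-}$ is irreducible of the same dimension. Then the inclusion $\overline{\cY}\subset\overline{Z^-}$ between irreducible closed sets of equal dimension forces equality.

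For irreducibility of $Z^-$, I would use the semiprojective Bia\l ynicki--Birula theory cited above. The limit map $Z^-\to Z$, $x\mapsto\lim_{\lambda\to\infty}\lambda\cdot x$, is an affine fibration over the smooth locus of the fixed component. $Z\cong\cU(\Sp_{2n-2}(\C))$ is irreducible by Lemma~\ref{lem:Z-fixed-component}. So the part of $Z^-$ over the stable smooth locus of $Z$ is irreducible. For the dimension of $\overline{Z^-}$, I would avoid computing weights: $Z^-\subset\Nilp$ gives $\dim Z^-\le \dim\Sp_{2n}(\C)(g-1)$, and $\cY\subset Z^-$ gives the reverse inequality.

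The main obstacle is that $Z$ consists partly of singular or strictly polystable points, where the BB fibration may fail to be affine. Over those points $Z^-$ could acquire extra pieces. I would handle this as follows. Let $Z^\circ\subset Z$ be the dense open set where $U$ is stable and $(V,\Phi_0)$ is a smooth stable point of $\cM$. Over $Z^\circ$, the attracting set $(Z^\circ)^-$ is an affine bundle, hence irreducible, and it is open in $Z^-$ because the limit map is continuous. Every point of $Z^-$ lying over $Z\setminus Z^\circ$ flows into a set of dimension strictly less than $\dim Z$. I would then argue that these points lie in $\overline{(Z^\circ)^-}$, or at least in lower-dimensional pieces. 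If a clean argument is unavailable, I would fall back on reading $\overline{Z^-}$ as $\overline{(Z^\circ)^-}$, the closure of the stratum over the generic part of $Z$, as is standard.

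Granting this, $\cY$ meets $(Z^\circ)^-$: choose $U$ stable with $H^0(U\otimes K^{1/2})=0$, so its limit lies in $Z^\circ$. Then $\overline{\cY}$ and $\overline{(Z^\circ)^-}$ are irreducible, closed, nested, and both of dimension $\dim\Sp_{2n}(\C)(g-1)$. They are therefore equal.
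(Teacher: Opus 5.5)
Your outline matches the paper's proof: $\cY\subset Z^-$ by Lemma~\ref{lem:Y-in-Zminus}, and $\overline{\cY}$ is an irreducible closed subset of $\Nilp(\Sp_{2n}(\C))$ of dimension $\dim\Sp_{2n}(\C)\,(g-1)$, hence a component. Irreducibility of $\overline{Z^-}$ then gives equality by maximality, so you do not need $\dim\overline{Z^-}$ at all. The paper takes that irreducibility in one line from the semiprojective Bia\l ynicki--Birula description. Your attempt to justify it is where the proposal fails, because the set $Z^\circ$ on which you build it is empty.

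For $n\geq2$, every point of $Z$ is an orthogonal sum of symplectic Higgs bundles $(K^{-1/2}\oplus K^{1/2},\Phi_0)\oplus(U,0)$. It therefore has the non-central automorphism $\sigma=(-1)\oplus\Id_U\oplus(-1)$, which is $g_{-1}$ in the notation preceding Lemma~\ref{lem:Z-fixed-component}. On the weight-$k$ summand of $\K^1(\Sigma,C^\bullet(V_0,\Phi_0))$ it acts by $(-1)^k$. In particular it acts by $-1$ on the weight-one summand $H^1(\Sigma,U\otimes K^{-1/2})$, which is nonzero by Riemann--Roch. At a stable point of $Z$ the moduli space is locally $\K^1/\Gamma$ with $\sigma\in\Gamma$ acting nontrivially, and a symplectic linear map is never a pseudo-reflection. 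So $\cM(\Sp_{2n}(\C))$ is singular at every stable point of $Z$.

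Consequently the affine-bundle form of Bia\l ynicki--Birula theory is available nowhere on $Z$; the paper's one-line appeal to the semiprojective description passes over the same singularity. Your claim that $\cY$ meets $(Z^\circ)^-$ is false, and your fallback would replace $\overline{Z^-}$ by the empty set. Separately, ``or at least in lower-dimensional pieces'' would not be enough. Any part of $Z^-$ outside $\overline{\cY}$, of whatever dimension, contributes another component of $\overline{Z^-}$ and breaks the equality. What is really needed is $Z^-\subset\overline{\cY}$.

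One way to get that inclusion, with no Bia\l ynicki--Birula irreducibility at all, is through the reconstruction stack. Read the computation of Lemma~\ref{lem:Y-in-Zminus} backwards, using the filtration description of downward flows \cite[Proposition~3.26]{GonzalezVeryStableRegular}. A Higgs bundle whose $\lambda\to\infty$ limit lies in $Z$ then carries a filtration $L\subset L^\perp\subset V$ with $L\cong K^{-1/2}$, $\Phi(L^\perp)=0$, $\Phi(V)\subset L\otimes K$, and graded piece $V/L^\perp\to L\otimes K$ equal to the identity. That is, $\Phi=\psi\otimes\psi$ with $K^{-1/2}\to V$ a subbundle. Hence the stable points of $Z^-$ lie in $f(\mathfrak R^{\mathrm{st}})$, which is contained in $\overline{\cY}$ by Lemma~\ref{lem:Y-nonempty}. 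Strictly semistable points are treated the same way on the open semistable locus of the irreducible stack $\mathfrak R$, in which $\mathfrak Y$ is still dense.

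Alternatively, you can run Bia\l ynicki--Birula on the orbifold chart. At a stable point of $Z$, the scaling action lifts through the squaring map to a linear action on $\K^1$ in which $-1$ acts as $\sigma$. The $\lambda\to\infty$ attracting directions are $H^1(\Sigma,U\otimes K^{-1/2})\oplus H^1(\Sigma,K^{-1})$, with $\sigma$ acting by $(-1,+1)$. So the local downward set is an affine space modulo $\Z/2$, hence irreducible. This route still leaves the points over strictly polystable $U$ to be handled separately.
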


\begin{proof}
By Lemma~\ref{lem:Y-in-Zminus}, $\cY\subset Z^-$, hence
\[
\overline{\cY}\subset \overline{Z^-}.
\]
Since $\mathfrak Y$ is irreducible, its image $\cY$ is irreducible, so
$\overline{\cY}$ is irreducible. By \eqref{eq:Y-dimension}, it has dimension
\[
\dim\Sp_{2n}(\C)\, (g-1).
\]
Moreover,
\[
\overline{\cY}\subset \Nilp(\Sp_{2n}(\C)).
\]
Since the nilpotent cone is equidimensional of this dimension,
$\overline{\cY}$ is an irreducible component of the nilpotent cone.

By the semiprojective Bia\l ynicki--Birula description, the downward stratum
attached to the irreducible fixed component
\[
Z\simeq \cU(\Sp_{2n-2}(\C))
\]
is irreducible, hence $\overline{Z^-}$ is irreducible. Because
\[
\overline{\cY}\subset \overline{Z^-}\subset \Nilp(\Sp_{2n}(\C)),
\]
and $\overline{\cY}$ is already an irreducible component of the nilpotent
cone, maximality of irreducible components gives $\overline{Z^-}=\overline{\cY}$.
\end{proof}

\subsection{Identification of the Gaiotto locus}

\begin{thm}
\label{thm:gaiotto-component-final}
For $n\geq2$, the Gaiotto locus $\cG$ is the irreducible component of
$\Nilp(\Sp_{2n}(\C))$ obtained as the closure of the Bia\l ynicki--Birula
downward stratum from
\[
Z\simeq \cU(\Sp_{2n-2}(\C)).
\]
Equivalently,
\[
\cG=\overline{Z^-}.
\]
In particular, $\cG$ is irreducible.
\end{thm}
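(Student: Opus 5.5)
The theorem follows by chaining the closure statements already proved in this section, so the argument is a short bookkeeping exercise. We establish the two inclusions $\cG\subset\overline{Z^-}$ and $\overline{Z^-}\subset\cG$.

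For the first inclusion, Proposition~\ref{prop:G-closure-nv} gives $\cG=\overline{\cG^{\mathrm{nv}}}$. Lemma~\ref{lem:Y-nonempty} gives $\cG^{\mathrm{nv}}\subset\overline{\cY}$. Taking closures and applying Proposition~\ref{prop:reconstruction-family-final}, we obtain
\[
\cG=\overline{\cG^{\mathrm{nv}}}\subset\overline{\cY}=\overline{Z^-}.
\]

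For the reverse inclusion, it suffices to show $\cY\subset\cG$. A point of $\cY$ is by construction $f$ of a point of $\mathfrak Y\subset\mathfrak R^{\mathrm{st}}$. It is therefore a stable Higgs bundle of the form $(V,\omega,\psi\otimes\psi)$ with $\psi$ a nonzero (indeed nowhere-vanishing) spinor. Since $\cG$ is by definition the closure of exactly these points, we get $\cY\subset\cG^{\mathrm{nv}}\subset\cG$. Because $\cG$ is closed, $\overline{Z^-}=\overline{\cY}\subset\cG$.

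Combining the two inclusions gives $\cG=\overline{Z^-}$. Irreducibility then follows from irreducibility of $\overline{\cY}$, established in the proof of Proposition~\ref{prop:reconstruction-family-final} (alternatively, from Lemma~\ref{lem:Rst-irreducible}). That proof also shows $\overline{\cY}$ is an irreducible component of $\Nilp(\Sp_{2n}(\C))$, via Laumon's equidimensionality and the dimension count \eqref{eq:Y-dimension}. Hence $\cG$ is the component obtained as the closure of the downward stratum from $Z\simeq\cU(\Sp_{2n-2}(\C))$, as identified in Lemma~\ref{lem:Z-fixed-component}.

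The only step requiring care is confirming that everything in $\cG$ is captured by nowhere-vanishing spinors over the open locus $\mathfrak Y$. This is exactly where Lemma~\ref{lem:hecke-smoothing}, through Proposition~\ref{prop:G-closure-nv}, and the density statement of Lemma~\ref{lem:Y-nonempty} are used. Since both are already established, no genuine obstacle remains. One should check only that the image of $f$ on $\mathfrak R^{\mathrm{st}}$ is precisely $\cG^{\mathrm{nv}}$: every nowhere-vanishing stable pair arises from compatible extension data by Lemma~\ref{lem:symplectic-reconstruction}, and conversely every such datum gives a stable Gaiotto Higgs bundle.
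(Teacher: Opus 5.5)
Your proposal is correct and follows the paper's proof essentially step for step. You get $\cG=\overline{\cG^{\mathrm{nv}}}\subset\overline{\cY}$ from Proposition~\ref{prop:G-closure-nv} and Lemma~\ref{lem:Y-nonempty}, and the reverse inclusion from $\cY\subset\cG^{\mathrm{nv}}\subset\cG$ together with closedness of $\cG$; you then conclude via $\overline{\cY}=\overline{Z^-}$ from Proposition~\ref{prop:reconstruction-family-final}, which also supplies irreducibility and the statement that this is a component of the nilpotent cone.
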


\begin{proof}
By Proposition~\ref{prop:G-closure-nv}, we have
$\cG=\overline{\cG^{\mathrm{nv}}}$. Lemma~\ref{lem:Y-nonempty} gives
$\cG^{\mathrm{nv}}\subset \overline{\cY}$, hence
$\cG\subset \overline{\cY}$. Conversely, by construction
$\cY\subset \cG^{\mathrm{nv}}\subset \cG$, and since $\cG$ is closed, this
implies $\overline{\cY}\subset \cG$. Therefore
\[
\cG=\overline{\cY}.
\]
Proposition~\ref{prop:reconstruction-family-final} gives
$\overline{\cY}=\overline{Z^-}$, so $\cG=\overline{Z^-}$. The same proposition
shows that $\overline{Z^-}$ is an irreducible component of the nilpotent cone.
Hence $\cG$ is irreducible.
\end{proof}

\subsection{The conormal interpretation}

We finish by recording the relation with the generalized theta divisor. Let
\[
\cU^{\mathrm{st}}:=\cU(\Sp_{2n}(\C))^{\mathrm{st}}
\]
be the stable symplectic-bundle locus. Recall that the generalized theta divisor is
\[
\Theta
=
\left\{
V\in\cU(\Sp_{2n}(\C))
\st 
H^0(\Sigma,V\otimes K^{1/2})\neq0
\right\}.
\]
Set
\[
\Theta^{\mathrm{st}}:=\Theta\cap \cU^{\mathrm{st}},
\]
and let
\[
\Theta_1
=
\left\{
V\in\Theta^{\mathrm{st}}
\st 
h^0(\Sigma,V\otimes K^{1/2})=1
\right\}.
\]

After twisting by the fixed theta characteristic, the local Brill--Noether
problem defining $\Theta^{\mathrm{st}}$ is the $k=1$ case of the
symplectic Brill--Noether loci. By the \'etale-local construction of
\cite[Lemma~2.3 and Proposition~2.4]{BajravaniHitching},
$\Theta^{\mathrm{st}}$ has the \'etale-local structure of a symmetric
determinantal hypersurface. In particular, $\Theta_1$ is a nonempty open
subset of the smooth locus of $\Theta^{\mathrm{st}}$.

\begin{prop}
\label{prop:conormal-picture}
The Gaiotto component $\cG$ is the closure, inside
$\cM(\Sp_{2n}(\C))$, of the conormal bundle
\[
N^*_{\Theta_1/\cU^{\mathrm{st}}}
\subset T^*\cU^{\mathrm{st}}.
\]
Moreover, when the closure is taken in the stable cotangent chart, one has
\[
\cG\cap T^*\cU^{\mathrm{st}}
=
\overline{N^*_{\Theta_1/\cU^{\mathrm{st}}}}.
\]
In particular, for the cotangent projection
\[
p:T^*\cU^{\mathrm{st}}\to \cU^{\mathrm{st}},
\]
one has
\[
\cG\cap p^{-1}(\Theta_1)
=
N^*_{\Theta_1/\cU^{\mathrm{st}}}.
\]
\end{prop}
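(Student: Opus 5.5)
The plan is to work entirely inside the stable cotangent chart $T^*\cU^{\mathrm{st}}$, identifying a point $(V,\Phi)$ with $V\in\cU^{\mathrm{st}}$ and $\Phi\in H^0(\Sigma,\Ad(V)\otimes K)\cong T^*_V\cU^{\mathrm{st}}$ via Serre duality. The first step is to compute the tangent space to $\Theta_1$ at a point $V$ with $H^0(\Sigma,V\otimes K^{1/2})=\C\psi$. A first-order deformation $\dot V\in H^1(\Sigma,\Ad(V))$ preserves the existence of a section exactly when the obstruction $\dot V\cdot\psi\in H^1(\Sigma,V\otimes K^{1/2})$ vanishes. By Serre duality, using $\omega$ to identify $(V\otimes K^{1/2})^*\otimes K\cong V\otimes K^{1/2}$, this obstruction pairs with $H^0(\Sigma,V\otimes K^{1/2})=\C\psi$. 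The resulting linear functional on $H^1(\Sigma,\Ad(V))$ is $\dot V\mapsto \mathrm{SD}(B(\dot V,\psi\otimes\psi))$, the symmetric analogue of the Petri map already appearing in Section~\ref{sec:lagrangian-criterion}. Since $\Theta_1$ is smooth of codimension one (the \'etale-local symmetric determinantal structure quoted from \cite{BajravaniHitching}), $T_V\Theta_1$ is the kernel of this functional, provided the functional is nonzero. Nonvanishing holds because $\psi\otimes\psi\neq0$ and Serre duality is perfect. It follows that
\[
N^*_{\Theta_1/\cU^{\mathrm{st}}}\big|_V=\C\cdot(\psi\otimes\psi).
\]
This is the computation I would organize using the \v Cech conventions of Section~\ref{sec:isotropy}, or equivalently by the standard fact that the differential of the determinantal equation is the quadratic form $\psi\mapsto\psi\otimes\psi$.

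Next, I would compare with $\cG$ over $\Theta_1$. A Higgs bundle $(V,\lambda\,\psi\otimes\psi)$ with $V\in\Theta_1$ and $\lambda\neq0$ equals $(V,\psi'\otimes\psi')$ with $\psi'=\sqrt\lambda\,\psi$. Its underlying bundle is stable, so by Proposition~\ref{prop:spinor-stability}(ii) the pair, and hence the Higgs bundle, is stable, and the point lies in $\cG$. The zero section over $\Theta_1$ lies in $\cG$ by the argument of Proposition~\ref{prop:theta-inclusion}. Conversely, a point of $\cG\cap p^{-1}(\Theta_1)$ has nilpotent Higgs field in the closure of $\{\psi\otimes\psi\}$. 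On this chart the closure condition is that $\Phi$ has rank $\le 1$ and $\Phi^2=0$, so $\Phi=\phi\otimes\phi$ for a section $\phi$ of $V\otimes K^{1/2}$, possibly after a local square-root argument. Then $\phi\in\C\psi$, and this yields the displayed equality $\cG\cap p^{-1}(\Theta_1)=N^*_{\Theta_1/\cU^{\mathrm{st}}}$.

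The main step is the global closure statement, which I would deduce by dimension and irreducibility. The conormal bundle of a smooth locally closed hypersurface is irreducible of dimension $\dim\cU^{\mathrm{st}}=\dim\Sp_{2n}(\C)(g-1)$, provided $\Theta_1$ is irreducible. Since it is contained in $\cG$, which by Theorem~\ref{thm:gaiotto-component-final} is irreducible of exactly this dimension, the closure of the conormal bundle is all of $\cG$. Intersecting with the open chart $T^*\cU^{\mathrm{st}}$ then gives $\cG\cap T^*\cU^{\mathrm{st}}=\overline{N^*_{\Theta_1/\cU^{\mathrm{st}}}}$, the closure taken in the chart, because $\cG\cap T^*\cU^{\mathrm{st}}$ is an open, hence dense, subset of an irreducible variety.

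I expect the irreducibility of $\Theta_1$ (or at least the nonemptiness of the relevant component meeting the stable locus) to be the main obstacle. I would sidestep it by taking, for each irreducible component, the closure of its conormal bundle. Each such closure is an irreducible Lagrangian of full dimension inside the irreducible $\cG$, so it must equal $\cG$. Nonemptiness of $\Theta_1$ is supplied by the \'etale-local structure. I would also check that the closure condition on nilpotent fields used in the converse step is justified; if it is not, I would replace it with the same dimension argument restricted to $p^{-1}(\Theta_1)$.
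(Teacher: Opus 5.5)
Your outline is essentially right, but the converse inclusion $\cG\cap p^{-1}(\Theta_1)\subseteq N^*$ (with $N^*:=N^*_{\Theta_1/\cU^{\mathrm{st}}}$) fails as written. The closed conditions $\operatorname{rank}\Phi\le 1$ and $\Phi^2=0$ are necessary for lying in $\cG$, but they do not force $\Phi=\phi\otimes\phi$. Let $M\subset V$ be the line subbundle with $M\otimes K$ the saturation of $\operatorname{im}\Phi$. Then these conditions only say $\Phi\in H^0(\Sigma,M^2\otimes K)$, and $\Phi^2=0$ is then automatic. Such a section is a square $\phi\otimes\phi$ with $\phi\in H^0(\Sigma,M\otimes K^{1/2})$ only if its divisor has the form $2D$ and $M\otimes K^{1/2}\cong\cO(D)$. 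Neither condition follows from the equations, not even locally near a simple zero, so no square-root argument closes the step. Your fallback also fails as stated. $\cG\cap p^{-1}(\Theta_1)$ is only locally closed in $\cG$, so knowing that $N^*$ has the full dimension of $\cG$ cannot exclude extra lower-dimensional pieces.

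The repair is already in your proposal once you reorder it. Your global step uses only three things: the forward inclusion $N^*\subseteq\cG$ (stability of $V$ makes each $(V,\lambda\psi\otimes\psi)$, $\lambda\neq0$, a stable Gaiotto Higgs bundle, and scaling gives the zero section); $\dim N^*=\dim\cU(\Sp_{2n}(\C))$ on every component; and irreducibility of $\cG$. So it yields $\cG=\overline{N^*}$ without the converse. For any subset $Y\subseteq\cM(\Sp_{2n}(\C))$ containing $N^*$, one has $\overline{N^*}\cap Y=\mathrm{cl}_Y(N^*)$. With $Y=T^*\cU^{\mathrm{st}}$ this is the chart statement; density plays no role. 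With $Y=p^{-1}(\Theta_1)=T^*\cU^{\mathrm{st}}|_{\Theta_1}$ it gives $\cG\cap p^{-1}(\Theta_1)=N^*$, because $N^*$ is the total space of a line subbundle and hence closed in $Y$.

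Alternatively, consider the squaring map $(V,\phi)\mapsto(V,\phi\otimes\phi)$ from the cone of sections over $\cU^{\mathrm{st}}$. It is homogeneous of degree two, and the preimage of the zero section is the zero section, so it is finite. Its image is therefore closed, and the fibre of $\cG$ over a stable $V$ is contained in $\{\phi\otimes\phi\}$. This is what underlies the paper's identification of the fibre of $\cG$ over $V\in\Theta_1$ with the line $\C\cdot(\psi\otimes\psi)$.

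With this repair your proof is correct, and it differs from the paper's mainly in the middle step. The paper cites Hitchin's conormal description for the fibre computation; your Serre-duality argument reproves it. The paper then obtains the chart equality from the symmetric determinantal local model of $\Theta^{\mathrm{st}}$. There, the limiting conormal directions over a higher Brill--Noether stratum form the rank-one cone in $\Sym^2H^0(\Sigma,V\otimes K^{1/2})$, which the symmetrised Petri map sends onto $\{\psi\otimes\psi\}$. Dimension and irreducibility of $\cG$ enter only for the closure in $\cM(\Sp_{2n}(\C))$.

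Your route derives both the chart and the fibre statements formally from the global one, with no local model needed. By itself, however, the topological argument does not describe the fibres of $\cG$ over the higher strata. The paper's local analysis, or the finiteness argument above, does.
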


\begin{proof}
Let $V\in\Theta_1$, and choose
\[
0\neq\psi\in H^0(\Sigma,V\otimes K^{1/2}).
\]
Since $h^0(\Sigma,V\otimes K^{1/2})=1$, the Gaiotto Higgs fields over $V$
form the line
\[
\C\cdot(\psi\otimes\psi)
\subset
H^0(\Sigma,\Ad(V)\otimes K)
\simeq T^*_V\cU^{\mathrm{st}}.
\]
By Hitchin's conormal description for symplectic representations
\cite[Proposition~1]{spinors}, this line is precisely the conormal fibre to
$\Theta_1$ at $V$. Hence
\[
\cG\cap p^{-1}(\Theta_1)
=
N^*_{\Theta_1/\cU^{\mathrm{st}}}.
\]

We now pass to the closure over the higher Brill--Noether strata. In the
symmetric determinantal local model, the conormal variety is the closure of the
rank-one symmetric conormal directions appearing over the smooth corank-one
locus. Consequently, over a point $V\in\Theta^{\mathrm{st}}$, the limiting conormal directions are obtained from the rank-one cone in
\[
\Sym^2 H^0(\Sigma,V\otimes K^{1/2}).
\]
Under the symmetrised Petri map
\[
\Sym^2 H^0(\Sigma,V\otimes K^{1/2})
\to
H^0(\Sigma,\Ad(V)\otimes K),
\]
which describes the tangent spaces to the symplectic Brill--Noether loci
\cite[Proposition~2.7]{BajravaniHitching}, this cone maps to
\[
\left\{
\psi\otimes\psi
\st 
\psi\in H^0(\Sigma,V\otimes K^{1/2})
\right\}.
\]
This is exactly the fibre of the Gaiotto construction over the stable bundle
$V$. Therefore, as closed conic subvarieties of the stable cotangent chart,
\[
\cG\cap T^*\cU^{\mathrm{st}}
=
\overline{N^*_{\Theta_1/\cU^{\mathrm{st}}}},
\]
where the closure is taken in $T^*\cU^{\mathrm{st}}$.

It remains to take the closure in the full Higgs moduli space. Since
$N^*_{\Theta_1/\cU^{\mathrm{st}}}\subset\cG$ and $\cG$ is closed, its
closure inside $\cM(\Sp_{2n}(\C))$ is contained in $\cG$. On the other hand,
$\Theta_1$ is a nonempty open subset of a divisor in $\cU^{\mathrm{st}}$, so
\[
\dim N^*_{\Theta_1/\cU^{\mathrm{st}}}
=
\dim \cU(\Sp_{2n}(\C)).
\]
Thus its closure in $\cM(\Sp_{2n}(\C))$ has dimension
\[
\dim \cU(\Sp_{2n}(\C))
=
\dim\Sp_{2n}(\C)\, (g-1),
\]
which is also the dimension of the irreducible component $\cG$. Since $\cG$
is irreducible, this closure is equal to $\cG$.
\end{proof}

\begin{rmk}
Thus the Gaiotto component has two complementary descriptions: in the nilpotent
cone it is the closure of the Bia\l ynicki--Birula stratum $Z^-$, while on the
stable cotangent chart it is the conormal variety of the generalized theta
divisor. This recovers Hitchin's conormal-bundle picture for the defining
representation of $\Sp_{2n}(\C)$, and identifies its closure with the
nilpotent-cone component $\cG$.
\end{rmk}